\documentclass[11pt]{amsart}
\usepackage{a4wide,amssymb,color}
\usepackage[all]{xy}

\newcommand{\E}{\mathcal E}
\newcommand{\F}{\mathcal F}
\newcommand{\U}{\mathcal U}
\newcommand{\V}{\mathcal V}
\newcommand{\W}{\mathcal W}
\newcommand{\I}{\mathcal I}
\newcommand{\A}{\mathcal A}
\newcommand{\M}{\mathcal M}
\newcommand{\IR}{\mathbb R}
\newcommand{\IC}{\mathbb C}
\newcommand{\IE}{\mathbb E}
\newcommand{\so}{\mathsf{so}}
\newcommand{\supp}{\mathrm{supp}}
\newcommand{\Homeo}{\mathrm{Homeo}}

\newcommand{\Asigma}{\mathsf \Sigma}
\newcommand{\Adelta}{\mathsf \Delta}
\newcommand{\Alambda}{\mathsf \Lambda}
\newcommand{\Ddelta}{\widehat\Adelta}
\newcommand{\Dsigma}{\widehat\Asigma}

\newcommand{\e}{\varepsilon}
\newcommand{\w}{\omega}
\newcommand{\Ra}{\Rightarrow}

\newcommand{\add}{\mathrm{add}}
\newcommand{\non}{\mathrm{non}}
\newcommand{\cov}{\mathrm{cov}}
\newcommand{\cof}{\mathrm{cof}}

\newtheorem{theorem}{Theorem}[section]
\newtheorem{problem}[theorem]{Problem}

\newtheorem{lemma}[theorem]{Lemma}

\theoremstyle{definition}
\newtheorem{remark}[theorem]{Remark}
\newtheorem{example}[theorem]{Example}

\title{Set-Theoretical Problems in Asymptology}
\author{Taras Banakh and Igor Protasov}
\dedicatory{Dedicated to the memory of Kenneth Kunen}

\address{T.Banakh: Ivan Franko National University of Lviv (Ukraine) and Jan Kochanowski University in Kielce (Poland)}
\email{t.o.banakh@gmail.com}
\address{I.Protasov: Faculty of Computer Science and Cybernetics, Taras Shevchenko National University of  Kyiv, Academic Glushkov pr. 4d, 03680 Kyiv, Ukraine}
\email{i.v.protasov@gmail.com}
\subjclass{03E17, 03E75, 37B05, 51F30, 54D80}
\keywords{Coarse space, cardinal characteristic of the contunuum, ultrafilter, dynamical system, group action}

\begin{document}
\begin{abstract}
In this paper we collect some open set-theoretic problems that appear in the large-scale topology (called also Asymptology). In particular, we ask problems on: (i) critical cardinalities of some special (large, indiscrete, inseparated) coarse structures on $\omega$, (ii) the interplay between properties of a coarse space and its Higson corona, (iii) some special ultrafilters ($T$-points and cellular $T$-points) related to finitary coarse structures on $\omega$, (iv) partitions of coarse spaces into thin pieces, and (v) coarse groups having various extremal properties.
\end{abstract} 
\maketitle

\section*{Introduction}
In this paper we collect selected open problems in the large-scale topology that have set-theoretic flavor. The large-scale topology (or else Asymptology \cite{PZ}) studies properties of coarse spaces. Coarse spaces defined in term of balls were introduced under name balleans in \cite{PB} independently and simultaneously with \cite{Roe}. The necessary preliminary information on coarse spaces is collected in Section~\ref{s:coarse}. More information related to large-scale topology can be found in the monographs \cite{PB}, \cite{PZ}, \cite{Roe}.
In Section~\ref{s:critical} we pose some open problems on the critical cardinalities of some special (large, inseparated, indiscrete) coarse structures on $\w$ and in Section~\ref{s:Higson} we investigate the interplay between properties of a coarse space and its Higson corona. Section~\ref{s:ultra} is devoted to some special ultrafilters on $\w$, and Section~\ref{s:d} to the interplay between coarse properties of an action of a group $G\subseteq S_\omega$ on $\w$ and the induced action of $G$ on the compact Hausdorff space $\omega^*=\beta\w\setminus\w$. In Section~\ref{s6} we study partitions of coarse spaces into finitely many thin pieces and in the final Section~\ref{s:groups} we collect some questions about coarse groups with various  extremal properties.


\section{Coarse structures and coarse spaces}\label{s:coarse}

Coarse structures can be introduced via balls as in \cite{PB}, \cite{PZ} or via entourages as in \cite{Roe}.

An {\em entourage} on a set $X$ is any subset $E$ of the square $X\times X$ that contains the diagonal $\Delta_X:=\{(x,x):x\in X\}$ of $X\times X$. For two entourages $E,F$, the sets 
$$E^{-1}=\{(y,x):(x,y)\in E\}\mbox{ \ and \ }EF=\{(x,z):\exists y\in X\;\;(x,y)\in E\;\wedge (y,z)\in F\}$$are entourages on $X$.

For an entourage $E$ on $X$, a point $x\in X$ and a set $A\subseteq X$, the sets
$$E(x):=\{y\in X:(x,y)\in E\}\mbox{ \ and \ }E[A]=\bigcup_{a\in A}E(a)$$are called the $E$-balls around $x$ and $A$, respectively.

An entourage $E$ is called 
\begin{itemize}
\item {\em symmetric} if $E=E^{-1}$;
\item {\em cellular} if $E^{-1}=E=EE$ (i.e., $E$ is an equivalence relation on $X$);
\item {\em locally finite} if for any $x\in X$ the set  $E^\pm(x):=E(x)\cup E^{-1}(x)$ is finite;
\item {\em finitary} if the cardinal $\sup_{x\in X}|E^\pm(x)|$ is finite;
\item {\em trivial} if the set $E\setminus\Delta_X$ is finite.
\end{itemize}
It is clear that each cellular entourage is symmetric, each trivial entourage is finitary, and each finitary entourage  is locally finite.

A {\em coarse structure} on a set $X$ is any family $\E$ of entourages on $X$  satisfying the following axioms:
\begin{itemize}
\item[(A1)] for any $E,F\in\E$, the entourage $EF^{-1}$ is contained in some entourage $G\in\E$;
\item[(A2)] $\bigcup\E=X\times X$;
\item[(A3)] for any entourages $E\subseteq F$ on $X$ the inclusion $F\in\E$ implies $E\in\E$.
\end{itemize}
A subfamily $\mathcal B\subseteq\E$ is called a {\em base} of the coarse structure $\E$ if each entourage $E\in\E$ is contained in some entourage $B\in\mathcal B$. Any base of a coarse structure satisfies the axioms (A1), (A2). On the other hand, any family of entourages $\mathcal B$ satisfying the axioms (A1), (A2) is a base of the unique coarse structure
$${\downarrow}\mathcal B=\bigcup_{B\in\mathcal B}\{E\subseteq X\times X:\Delta_X\subseteq E\subseteq B\}.$$
For a coarse structure $\E$ on a set $X$, its {\em weight} $w(\E)$ is defined as the smallest cardinality of a base of $\E$.

A coarse structure $\E$ on a set $X$ is called
\begin{itemize}
\item {\em bounded} if $X\times X\in\E$;
\item {\em locally finite} if each entourage $E\in\E$ is locally finite;
\item  {\em finitary} if each entourage $E\in\E$ is finitary;
\item {\em cellular} if $\E$ has a base consisting of cellular entourages.
\end{itemize}

Each set $X$ carries the {\em smallest coarse structure} consisting of all trivial entourages on $X$. The smallest coarse structure is finitary and cellular. The family $\IE_{<\w}(X)$ (resp. $\IE_{\w}(X)$) of all finitary (resp. locally finite) entourages on $X$ is the largest finitary (resp. locally finite) coarse structure on $X$.

Each action $\alpha:G\times X\to X$ of a group $G$ on a set $X$ induces the {\em finitary coarse structure} $\E_G$ on $X$, which is generated by the base consisting of the entourages
$$(L\times L)\cup\{(x,y)\in X\times X:y\in Fx\}$$where $L$ is a finite subset of $X$ and $F$ is a finite subset of the group $G$ that contain the identity $1_G$ of $G$. The action of the permutation group $S_X$ on $X$ induces the {\em largest finitary coarse structure} $\E_{S_X}=\IE_{<\w}(X)$ on a set $X$. The trivial subgroup $\{1_G\}$ induces the smallest finitary coarse structure on $X$.

The following fundamental fact was proved by Protasov in \cite{b1} (see also \cite{b4} and \cite{b2}).

\begin{theorem}\label{t:G} Every finitary coarse structure on a set $X$ coincides with the finitary coarse structure $\E_G$ induced by the action of some subgroup $G\subseteq S_X$.
\end{theorem}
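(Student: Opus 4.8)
The plan is to construct the group $G$ explicitly as the group of all permutations ``compatible'' with $\E$, and then to verify the two inclusions $\E_G\subseteq\E$ and $\E\subseteq\E_G$ separately. For $g\in S_X$ write $\Gamma_g:=\{(x,gx):x\in X\}$ and set $E_g:=\Gamma_g\cup\Gamma_{g^{-1}}\cup\Delta_X$, a symmetric entourage on $X$; then define $G:=\{g\in S_X:E_g\in\E\}$. Before anything else I would record the routine consequences of the axioms (A1)--(A3): since $\bigcup\E=X\times X$ is nonempty (the case $X=\emptyset$ being trivial), $\E$ contains some entourage and hence $\Delta_X\in\E$ by (A3); applying (A1) with first entourage $\Delta_X$ shows $\E$ is closed under $E\mapsto E^{-1}$; a second application of (A1) shows that $EF$ lies under some member of $\E$ whenever $E,F\in\E$; and since $E\cup F\subseteq EF$ once $\Delta_X\subseteq E\cap F$, the family $\E$ is closed under finite unions. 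Using these, $G$ is a subgroup of $S_X$: $E_1=\Delta_X\in\E$; $E_{g^{-1}}=E_g$; and from the inclusions $\Gamma_{gh}\subseteq E_hE_g$ and $\Gamma_{(gh)^{-1}}\subseteq E_gE_h$ (each witnessed by the obvious intermediate point) one gets $E_{gh}\subseteq(E_hE_g)\cup(E_gE_h)\cup\Delta_X\in\E$, so $gh\in G$.

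For the inclusion $\E_G\subseteq\E$, note that a basic entourage of $\E_G$ has the form $(L\times L)\cup\{(x,y):y\in Fx\}=(L\times L)\cup\bigcup_{g\in F}\Gamma_g$ for finite $L\subseteq X$ and finite $F\subseteq G$ with $1\in F$. Here $(L\times L)\cup\Delta_X$ is a trivial (hence finitary) entourage lying in $\E$, because each of its finitely many off-diagonal pairs belongs to some member of $\E$ by (A2) and $\E$ is closed under finite unions; and $\Gamma_g\subseteq E_g\in\E$ for every $g\in F$. Thus each basic entourage of $\E_G$ lies in $\E$, and by (A3) so does every entourage of $\E_G$.

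The crux is the reverse inclusion $\E\subseteq\E_G$. Fix $E\in\E$; replacing $E$ by $E\cup E^{-1}\cup\Delta_X$ we may assume $E$ is symmetric, and since $\E$ is finitary we have $|E(x)|\le n$ for all $x\in X$ and some fixed $n$. Consider the loopless graph $\Gamma$ on the vertex set $X$ whose edges are the pairs $\{x,y\}$ with $x\ne y$ and $(x,y)\in E$; its maximum degree is at most $n-1$. The key step is to properly edge-colour $\Gamma$ with finitely many colours: every finite subgraph of $\Gamma$ is properly edge-colourable with a bounded number of colours (its line graph has bounded degree, so a greedy bound or Vizing's theorem applies), so a compactness argument (the De Bruijn--Erd\H{o}s theorem applied to the line graph, or Tychonoff's theorem) produces a proper edge-colouring of $\Gamma$ into finitely many matchings $M_1,\dots,M_m$. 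Each $M_i$ is the graph of an involution $\sigma_i$ of its support $D_i\subseteq X$; extending $\sigma_i$ by the identity on $X\setminus D_i$ gives an involution $g_i\in S_X$ with $\Gamma_{g_i}\subseteq M_i\cup\Delta_X\subseteq E$, hence $E_{g_i}=\Gamma_{g_i}\cup\Delta_X\subseteq E\in\E$ and $g_i\in G$. Finally $E=\Delta_X\cup\bigcup_{i=1}^m M_i\subseteq\bigcup_{g\in F}\Gamma_g$ with $F=\{1,g_1,\dots,g_m\}\subseteq G$, so $E$ lies under a basic entourage of $\E_G$ and $E\in\E_G$ by (A3). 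Together with the previous paragraph this yields $\E=\E_G$.

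I expect the only genuinely non-routine point to be the finite edge-colouring of $\Gamma$: when $X$ is infinite one cannot colour greedily and must pass through a compactness principle, and one has to check that the resulting colour classes really are matchings, so that the induced partial maps are partial bijections extendable to permutations whose graphs remain inside $E$. Everything else is bookkeeping with the axioms (A1)--(A3) and the definition of $\E_G$.
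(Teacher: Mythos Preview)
Your argument is correct. The paper does not actually prove this theorem; it merely attributes it to \cite{b1} (with further references to \cite{b4} and \cite{b2}), so there is no in-paper proof to compare against line by line.

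That said, your route---defining $G=\{g\in S_X:E_g\in\E\}$, checking it is a subgroup, and then decomposing an arbitrary symmetric finitary $E\in\E$ into finitely many perfect matchings via a bounded edge-colouring and compactness---is precisely the standard decomposition idea behind the cited results (the title of \cite{b2}, ``Decompositions of set-valued mappings'', is exactly this). Each colour class yields an involution $g_i$ with $E_{g_i}\subseteq E$, so $g_i\in G$, and $E$ is then covered by the basic $\E_G$-entourage determined by $F=\{1,g_1,\dots,g_m\}$. Your bookkeeping with axioms (A1)--(A3) (downward closure, closure under $E\mapsto E^{-1}$, finite unions via $E\cup F\subseteq EF$, and the fact that every trivial entourage lies in $\E$) is accurate, and the reduction to symmetric $E$ is harmless because $\E_G$ is itself downward closed. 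The only genuinely nontrivial ingredient, as you note, is the De~Bruijn--Erd\H{o}s/compactness step producing a proper edge-colouring of the infinite bounded-degree graph with finitely many colours; this uses a fragment of choice, which is standard and unavoidable here.
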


Every metric space $(X,d)$ carries the canonical coarse structure $\E_d$, which is generated by the base 
$$\big\{\{(x,y)\in X\times X:d(x,y)<\e\}:0<\e<\infty\big\}.$$
A coarse structure $\E$ on a set $X$ is called {\em metrizable} if $\E=\E_d$ for some metric $d$ on $X$.

The following metrizability theorem can be found in \cite[2.1.1]{PZ}.

\begin{theorem} A coarse structure on a set is metrizable if and only if it has a countable base.
\end{theorem}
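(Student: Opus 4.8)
Below is how I would attack the nontrivial ($\Leftarrow$) direction; the converse is recorded first for completeness.

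The ``only if'' direction is immediate: if $\E=\E_d$ for a metric $d$, then the countable family $\big\{\{(x,y)\in X\times X:d(x,y)<n\}:n\in\mathbb N\big\}$ is a base of $\E_d$, being cofinal in the defining base $\{\{(x,y):d(x,y)<\e\}:0<\e<\infty\}$. So assume from now on that $\E$ has a countable base $\{B_n:n\in\w\}$, and let us manufacture a metric $d$ with $\E_d=\E$. I first record that the axioms (A1)--(A3) readily imply $\Delta_X\in\E$ and that $\E$ is closed under inverses, compositions and finite unions (e.g.\ $E^{-1}=\Delta_XE^{-1}$ and $E\cup F\subseteq(EF^{-1})F$ belong to $\E$ by (A1) together with the downward closure (A3)). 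Consequently, replacing each $B_n$ by $\bigcup_{i\le n}(B_i\cup B_i^{-1})$, I may assume that $\{B_n\}$ is an increasing chain of symmetric entourages.

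Next I would thin this chain to a ``geometric'' one: recursively pick indices $n_0<n_1<\cdots$ so that the entourages $E_k:=B_{n_k}$ satisfy $E_kE_kE_k\subseteq E_{k+1}$ for all $k$ (possible because $E_kE_kE_k\in\E$, hence lies in some $B_m$, and one takes $n_{k+1}>\max\{m,n_k\}$). Since $n_k\ge k$, each $B_k$ lies in $E_k$, so $\{E_k:k\in\w\}$ is again a base of $\E$, now an increasing one consisting of symmetric entourages with $E_kE_kE_k\subseteq E_{k+1}$. Using these I define $f\colon X\times X\to[0,\infty)$ by $f(x,x)=0$ and $f(x,y)=\inf\{2^k:(x,y)\in E_k\}$ for $x\ne y$; since $\{E_k\}$ is a base and $\bigcup\E=X\times X$, the infimum is over a nonempty subset of $\w$, so $f$ is well defined, symmetric, has values in $\{0\}\cup\{2^k:k\in\w\}$, and satisfies $\{(x,y):f(x,y)\le 2^k\}=E_k$ for every $k$. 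The decisive property is the near-ultrametric inequality
$$f(x,z)\le 2\max\{f(x,y),f(y,z)\}\qquad(x,y,z\in X),$$
which is exactly where the condition $E_kE_kE_k\subseteq E_{k+1}$ enters: if $\max\{f(x,y),f(y,z)\}=2^k$ then $(x,y),(y,z)\in E_k$, so $(x,z)\in E_kE_k\subseteq E_{k+1}$ and $f(x,z)\le 2^{k+1}$.

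Then I would run Frink's chain construction, putting
$$d(x,y)=\inf\Big\{\textstyle\sum_{i=0}^{m-1}f(x_i,x_{i+1}):\ m\in\w,\ x=x_0,x_1,\dots,x_m=y\Big\}.$$
That $d$ is a pseudometric is automatic (symmetry and the triangle inequality follow from concatenating chains, and $d(x,x)=0$ from the empty chain), and the substantive point is the two-sided estimate $\tfrac14 f(x,y)\le d(x,y)\le f(x,y)$: the bound $d\le f$ is witnessed by the one-step chain, while $d\ge\tfrac14 f$ is the standard Frink induction on the length of a chain --- split a chain at a suitable vertex into two sub-chains each of total $f$-weight at most $\tfrac12\sum_if(x_i,x_{i+1})$ (plus one leftover edge), apply the inductive hypothesis to the sub-chains, and recombine using the near-ultrametric inequality. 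Since $f\ge 1$ off the diagonal, the estimate forces $d(x,y)>0$ for $x\ne y$, so $d$ is a genuine metric.

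It remains to verify $\E_d=\E$, which is a bookkeeping comparison of bases. The sets $\{(x,y):d(x,y)<2^k\}$, $k\in\w$, form a base of $\E_d$; from $d\le f$ and $\{(x,y):f(x,y)\le 2^k\}=E_k$ one gets $E_k\subseteq\{(x,y):d(x,y)<2^{k+1}\}$, and from $f\le 4d$ together with $\{(x,y):f(x,y)\le 2^{k+1}\}=E_{k+1}$ one gets $\{(x,y):d(x,y)<2^k\}\subseteq\{(x,y):f(x,y)<2^{k+2}\}=E_{k+1}$. Thus the base $\{E_k\}$ of $\E$ and the base $\{\{(x,y):d(x,y)<2^k\}\}$ of $\E_d$ are mutually cofinal, hence $\E=\E_d$. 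In this plan the only genuinely non-routine ingredient is the lower bound $d\ge\tfrac14 f$ in Frink's lemma; everything else is a direct manipulation of the axioms (A1)--(A3) and of the definition of $f$.
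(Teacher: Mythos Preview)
Your argument is correct. The paper itself does not give a proof of this statement at all: it simply quotes the result and refers the reader to \cite[2.1.1]{PZ}. So there is no ``paper's own proof'' to compare against beyond that citation. What you have written is the standard Frink--type chain argument (the large-scale analogue of the Alexandroff--Urysohn/Birkhoff metrization lemma), and every step checks out: the preprocessing of the countable base into an increasing symmetric chain with $E_kE_kE_k\subseteq E_{k+1}$ is legitimate using (A1)--(A3); the gauge $f$ is well defined because $\{E_k\}$ is a base and $\bigcup\E=X\times X$; the near-ultrametric inequality for $f$ follows from $E_kE_k\subseteq E_{k+1}$; the Frink bound $d\ge\tfrac14 f$ follows by the split-the-chain induction you sketch (in fact, since you arranged the \emph{triple} composition $E_k^3\subseteq E_{k+1}$, the three-term version $f(x,w)\le 2\max\{f(x,y),f(y,z),f(z,w)\}$ holds and the same induction gives the sharper $d\ge\tfrac12 f$, though you do not need it); and the mutual cofinality of $\{E_k\}$ and $\{\{d<2^k\}\}$ yields $\E=\E_d$. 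This is almost certainly the same argument one finds in \cite{PZ}, so your write-up is an adequate substitute for the citation.
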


A {\em coarse space} is a pair $(X,\E)$ consisting of a set $X$ and a coarse structure $\E$ on $X$. A coarse space is called {\em metrizable} (resp. {\em bounded}, {\em cellular}, {\em locally finite}, {\em finitary}) if so is its coarse structure $\E$.

Let $(X,\E)$ be a coarse space. A subset $B\subseteq X$ is called {\em bounded} if $B\subseteq E(x)$ for some $E\in\E$ and $x\in X$. The family of all bounded subsets in a coarse space is called the {\em  bornology} of the coarse space.

For two coarse spaces $(X,\E_X)$ and $(Y,\E_X)$, a function $f:X\to Y$ is called 
\begin{itemize}
\item {\em macro-uniform} if for any entourage $E_X\in\E_X$ there exists an antourage $E_Y\in\E_Y$ such that   $\{(f(x),f(x')):(x,x')\in E_X\}\subseteq E_Y$;
\item an {\em asymorphism} if $f$ is bijective and both maps $f$ and $f^{-1}$ are macro-uniform;
\item a {\em coarse equivalence} if $f$ is macro-uniform and there exists a macro-uniform map $g:Y\to X$  such that $\{(g\circ f(x),x):x\in X\}\subseteq E_X$ and  $\{(f\circ g(y),y):y\in Y\}\subseteq E_Y$ for some entourages $E_X\in\E_X$, $E_Y\in\E_Y$.
\end{itemize}

\begin{theorem}[\cite{BZ}] For any countable locally finite groups $G,H$, the finitary coarse spaces $(G,\E_G)$ and $(H,\E_G)$ are coarsely equivalent.
\end{theorem}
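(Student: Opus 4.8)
The plan is to reduce the statement to a combinatorial fact about nested partitions and then to prove that fact by a recursion along the ``spines'' of the two groups. We first dispose of the degenerate cases: a finite group is a bounded coarse space and all nonempty bounded coarse spaces are coarsely equivalent, so we may assume $G$ and $H$ are infinite. Write $G=\bigcup_{n\in\w}A_n$ as an increasing union of finite subgroups with $A_0=\{1_G\}$; this is possible since $G$ is countable and locally finite. As every finite subset of a locally finite group lies in a finite subgroup, the $A_n$ are cofinal among the finite subsets of $G$, so the cellular entourages $E_n^G:=\{(x,y)\in G\times G:x^{-1}y\in A_n\}$ form a base of $\E_G$. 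Hence $(G,\E_G)$ is entirely determined by the increasing sequence of partitions $\mathcal P_n^G:=\{xA_n:x\in G\}$ of $G$ into left cosets: each cell of $\mathcal P_n^G$ has $a_n:=|A_n|$ elements, each cell of $\mathcal P_{n+1}^G$ is a union of $a_{n+1}/a_n$ cells of $\mathcal P_n^G$, and $a_n\to\infty$. Likewise $(H,\E_H)$ is described by partitions $\mathcal P_n^H$ into cosets of finite subgroups $B_n$ with $\bigcup_nB_n=H$ and cell sizes $b_n:=|B_n|\to\infty$.

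\textbf{What has to be built.} It suffices to produce a map $f\colon G\to H$ with coarsely dense image, together with nondecreasing functions $\sigma,\tau\colon\w\to\w$, such that (a) $f$ sends every cell of $\mathcal P_k^G$ into a single cell of $\mathcal P_{\sigma(k)}^H$, and (b) the $f$-preimage of every cell of $\mathcal P_k^H$ lies in a single cell of $\mathcal P_{\tau(k)}^G$. Indeed, (a) says that $f$ is coarse; choosing $g\colon H\to G$ so that, for a fixed $j$, the point $f(g(y))$ lies in the cell of $\mathcal P_j^H$ containing $y$ (picking, for each such cell that meets $f(G)$, a point of $G$ mapped into it), condition (b) shows that $g$ is coarse and that $g\circ f$ and $f\circ g$ stay within $E_{\tau(j)}^G$ and $E_j^H$ of the respective identities, so $f$ is a coarse equivalence.

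\textbf{The construction.} Since a cofinal subfamily of a base is a base, passing to subsequences of $(A_n)$ and $(B_n)$ changes neither $\E_G$ nor $\E_H$, so I would first arrange that the cell sizes interleave with large gaps, e.g. $a_0=b_0=1$ and $2a_n\le b_n\le\tfrac12 a_{n+1}$ for all large $n$. Then I would build $f$ by recursion on the stages $A_1\subseteq A_2\subseteq\cdots$, keeping $f$ injective on $A_n$ and compatible with the partitions, with $f(A_n)\subseteq B_n$. At stage $n$ one extends $f$ to $A_{n+1}$ by distributing the $a_{n+1}/a_n$ cosets of $A_n$ inside $A_{n+1}$ among the $b_{n+1}/b_n$ cosets of $B_n$ inside $B_{n+1}$: the cell $B_n$ (which already holds $f(A_n)$) is re-used, and about $\lfloor b_n/a_n\rfloor$ of the new cosets are routed into each of the other cosets of $B_n$. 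The gap condition, which yields $(b_{n+1}/b_n)\lfloor b_n/a_n\rfloor\ge a_{n+1}/a_n$, leaves enough room for this distribution, and filling the cosets in this way forces the image to expand out to all of $H$.

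\textbf{The main obstacle.} The delicate point --- and where I expect the real work to be --- is to make (b) hold with a $\tau$ that is independent of the choices made in the recursion. A careless version of the distribution above routes ``new'' cosets of $G$ into ``fresh'' cosets of $H$ and thereby lets the $f$-preimage of a fixed cell of $\mathcal P_k^H$ creep across cells of $\mathcal P_m^G$ with $m\to\infty$, so that $\tau$ is not well defined and the quasi-inverse of $f$ fails to be coarse. The resolution I would pursue is to confine all merging of cells to a bounded band around the two spines $(A_n)$ and $(B_n)$, using the rapid growth arranged above to guarantee that every cell of $\mathcal P_k^H$ is completely filled during a window of stages whose length is bounded in terms of $k$ alone; then its $f$-preimage lies in a single cell of $\mathcal P_{\tau(k)}^G$. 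Showing that this window control can be kept compatible with the image exhausting $H$ is the heart of the matter; the remaining verifications are routine.
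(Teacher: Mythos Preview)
The paper does not give any proof of this statement: Theorem~1.3 is simply quoted from \cite{BZ} as background, with no argument supplied. So there is nothing in the present paper to compare your proposal against line by line.

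That said, your plan is not the route taken in \cite{BZ}. There the result is obtained by a \emph{characterization theorem}: one isolates intrinsic coarse invariants (asymptotic dimension zero together with a suitable capacity/growth condition) that determine the Cantor macro-cube up to coarse equivalence, and then checks that every infinite countable locally finite group, with its finitary coarse structure, satisfies them. This avoids building an explicit map between two arbitrary such groups; one instead shows that each is coarsely equivalent to a fixed model.

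Your proposal, by contrast, attempts a direct bijection-style construction between $G$ and $H$. The set-up is sound: the cellular entourages coming from the coset partitions do form a base, passing to subsequences is legitimate, and conditions (a) and (b) together with coarse density of the image are exactly what is needed for a coarse equivalence. You also correctly locate the real difficulty: controlling the $f$-preimages of $H$-cells uniformly, i.e.\ producing a well-defined $\tau$. However, you do not actually resolve it --- the ``window control'' idea is stated as an intention rather than carried out, and the assertion that a careful distribution keeps every cell of $\mathcal P_k^H$ filled within a window of stages bounded in terms of $k$ alone is precisely the lemma that needs proof. Without it the argument is incomplete: the naive recursion you describe can indeed let preimages spread without bound, and nothing written so far prevents that. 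If you want to push this direct approach through, the cleanest fix is to interleave the recursion with a back-and-forth, alternately extending $f$ on a new $G$-cell and forcing $f$ to hit a prescribed $H$-cell; this makes the control in (b) automatic and is closer in spirit to how such bijective coarse equivalences are usually built.
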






\section{Critical cardinalities related to some special coarse structures on $\w$}\label{s:critical}

In this section we consider some uncountable cardinals that appear as smallest weights of  coarse structures on $\w$ that possess some (pathological) properties.
First we recall the necessary information on cardinal characteristics of the continuum (see \cite{Blass}, \cite{vD}, \cite{Vau} for more details).

For a set $X$, its cardinality is denoted by $|X|$. Cardinals are identified with the smallest ordinals of a given cardinality. We denote by $\w$ and $\w_1$ the smallest infinite and the smallest uncountable cardinals, respectively. By $\mathfrak c$ we denote the cardinality of the real line.  For a set $X$, let $[X]^{<\w}$ be the family of all finite subsets of $X$, and $[X]^\w$  be the family of all countably infinite subsets of $X$. 

For two sets $A,B$, we write $A\subseteq^* B$ if $A\setminus B$ is finite. For two functions $f,g:\w\to\w$ we write $f\le g$ if $f(n)\le g(n)$ for all $n\in\w$, and $f\le^* g$ if the set $\{n\in\w:f(n)\not\le g(n)\}$ is finite.

\smallskip

Now we recall the definitions of some small uncountable cardinals, namely:
\begin{itemize}
\item[$\mathfrak b$]$=\min\{|B|:B\subseteq \w^\w\;\wedge\;\forall f\in\w^\w\;\exists g\in B\;\;g\not\le^* f\}$;
\item[$\mathfrak d$]$=\min\{|D|:D\subseteq \w^\w\;\wedge\;\forall f\in\w^\w\;\exists g\in D\;\;f\le g\}$;
\item[$\mathfrak r$]$=\min\{|R|:R\subseteq[\w]^\w\;\wedge\;\forall a\in[\w]^\w\;\exists r\in R\; \;(r\subseteq^* a\;\vee\;r\subseteq^*\w\setminus a)\}$;
\item[$\mathfrak s$]$=\min\{|S|:S\subseteq[\w]^\w\;\wedge\;\forall a\in[\w]^\w\;\exists s\in S\;\;(a\not\subseteq^*s\;\wedge\;a\not\subseteq^*\w\setminus s)\}$;
\item[$\mathfrak t$]$=\min\{|T|:T\subseteq [\w]^\w$ and $(\forall s,t\in T\;\;s\subseteq^* t$ or $t\subseteq^* s)$ and ($\forall s\in[\w]^\w\;\exists t\in T\;\;s\not\subseteq^* t)\}$;
\item[$\mathfrak u$] is the smallest cardinality of a base of a free ultrafilter on $\w$;
\end{itemize}
\smallskip

The order relations between these cardinals are described by the following  diagram (see \cite{Blass}, \cite{vD}, \cite{Vau}), in which for two cardinals $\kappa,\lambda$ the arrow $\kappa\to\lambda$ indicates that $\kappa\le\lambda$ in ZFC.
\smallskip
$$
\xymatrix{
&\mathfrak s\ar[r]&\mathfrak d\ar[rr]&&\mathfrak c\\
\w_1\ar[r]&\mathfrak t\ar[r]\ar[u]&\mathfrak b\ar[u]\ar[r]&\mathfrak r\ar[r]&\mathfrak u\ar[u]
}
$$

Each family of sets $\I$ with $\bigcup\I\notin\I$ has four basic cardinal characteristics:
\begin{itemize}
\item[] $\add(\I)=\min\{|\A|:\A\subseteq\I\;\wedge\;\bigcup\A\notin\I\}$;
\item[] $\cov(\I)=\min\{|\A|:\A\subseteq\I\;\wedge\;\bigcup\A=\bigcup\I\}$;
\item[] $\non(\I)=\min\{|A|:A\subseteq\bigcup\I\;\wedge\;A\notin\I\}$;
\item[] $\cof(\I)=\min\{|\mathcal J|:\mathcal J\subseteq\I\;\wedge\;\forall I\in\I\;\exists J\in\mathcal J\;(I\subseteq J)\}$.
\end{itemize}
These cardinal characteristics are usually considered for the $\sigma$-ideals $\mathcal M$ and $\mathcal N$ of meager sets and Lebesgue null sets on the real line, respectively.
The cardinal characteristics of the $\sigma$-ideals $\mathcal M$ and $\mathcal N$ are described by the famous Cicho\'n diagram:
$$
\xymatrix{
&\cov(\mathcal N)\ar[r]&\non(\M)\ar[r]&\cof(\M)\ar[r]&\cof(\mathcal N)\ar[r]&\mathfrak c\\
&&\mathfrak b\ar[r]\ar[u]&\mathfrak d\ar[u]\\
\w_1\ar[r]&\add(\mathcal N)\ar[r]\ar[uu]&\add(\M)\ar[r]\ar[u]&\cov(\M)\ar[r]\ar[u]&\non(\mathcal N).\ar[uu]\\
}
$$

Now let us return to coarse spaces and introduce some properties of coarse structures that will lead us to some new cardinal characteristics of the continuum. 

Let $\E$ be a coarse structure on a set $X$. A subset $A\subseteq X$ is called
\begin{itemize}
\item {\em $\E$-bounded} if $A\subseteq E(x)$ for some $E\in\E$ and $x\in X$;
\item {\em $\E$-unbounded} if $A$ is not $\E$-bounded;
\item {\em $\E$-large} if $E[A]=X$ for some $E\in\E$;
\item {\em $\E$-discrete} (or else {\em $\E$-thin}) if for every $E\in\E$ there exists an $\E$-bounded set $B\subseteq A$ such that $A\cap E(a)=\{a\}$ for every $a\in A\setminus B$.
\end{itemize}
Two subsets $A,B\subseteq X$ are called
\begin{itemize}
\item {\em $\E$-close} if there exists an entourage $E\in\E$ such that $A\subseteq E[B]$ and $B\subseteq E[A]$;
\item {\em $\E$-separated} if for every entourage $E\in\E$ the intersection $E[A]\cap E[B]$ is $\E$-bounded.
\end{itemize}

A coarse structure $\E$ on a set $X$ is called
\begin{itemize}
\item {\em indiscrete} if each $\E$-discrete set in $X$ is $\E$-bounded;
\item {\em inseparated} if any  $\E$-unbounded sets $A,B\subseteq X$ are not $\E$-separated;
\item {\em large} if any $\E$-unbounded set in $X$ is $\E$-large.
\end{itemize}
A coarse space is called {\em indiscrete} (resp. {\em inseparated}, {\em large}\/) if so is its coarse structure. 

For any infinite locally finite coarse space we have the implications:
$$\mbox{large $\Ra$ inseparated $\Ra$ indiscrete $\Ra$ nonmetrizable}.$$
The largest finitary coarse structure on $\w$ is large and hence inseparated and indiscrete.
Using indiscrete, inseparated or large coarse structures we can characterize certain known cardinal characteristics of the continuum.

\begin{theorem}[\cite{Ban}] The cardinal
\begin{enumerate} 
\item $\mathfrak b$ is equal to the smallest weight of an indiscrete locally finite coarse structure on $\w$;
\item $\mathfrak b$ is equal to the smallest weight of an inseparated locally finite coarse structure on $\w$;
\item $\mathfrak d$ is equal to the smallest weight of a large locally finite coarse structure on $\w$;
\item $\mathfrak c$ is equal to  the smallest weight of a large finitary coarse structure on $\w$.
\end{enumerate}
\end{theorem}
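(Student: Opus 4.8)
The theorem reduces to five facts, assembled using the implications \emph{large $\Rightarrow$ inseparated $\Rightarrow$ indiscrete} recorded above: (i)~every indiscrete locally finite coarse structure on $\w$ has weight $\ge\mathfrak b$; (ii)~there is an inseparated locally finite coarse structure on $\w$ of weight $\le\mathfrak b$; (iii)~every large locally finite coarse structure on $\w$ has weight $\ge\mathfrak d$; (iv)~there is a large locally finite coarse structure on $\w$ of weight $\le\mathfrak d$; (v)~every large finitary coarse structure on $\w$ has weight $\ge\mathfrak c$. Granting these, (1) and (2) follow from (i)+(ii) (since an inseparated coarse structure is indiscrete, the structure from (ii) witnesses the upper bound in both cases), (3) follows from (iii)+(iv), and (4) follows from (v) together with the fact, already recorded above, that the largest finitary coarse structure $\IE_\w(\w)$ is large and has weight $\le\mathfrak c$ (there being at most $\mathfrak c$ entourages on $\w$). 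Two remarks are used throughout: in a locally finite coarse structure on $\w$ the bounded sets are precisely the finite ones, and a set $A$ is $E$-large if and only if $E^{-1}(N)\cap A\ne\emptyset$ for every $N\in\w$.

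Both (i) and (iii) are proved by extracting a small family of functions from a base and using it to build a pathological subset of $\w$. For (i), if $\mathcal B$ is a base of an indiscrete locally finite coarse structure with $|\mathcal B|<\mathfrak b$, then the finite-valued functions $f_E(n):=\max E^{\pm}(n)$, $E\in\mathcal B$, are $\le^*$-dominated by a single increasing $g$; the set $A=\{a_n\}$ with $a_0=0$, $a_{n+1}=g(a_n)+1$ then satisfies, for each $E\in\mathcal B$, $E(a_n)\cap A=\{a_n\}$ for all large $n$ (a short computation with the inclusions $E^{\pm}(a_n)\subseteq[0,g(a_n)]\subseteq[0,a_{n+1})$), so $A$ is $\E$-thin and unbounded, contradicting indiscreteness. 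Statement (iii) is dual in spirit: a base of a large locally finite coarse structure of weight $<\mathfrak d$ yields a family of functions (one records, for each entourage $E$, how far the sets $E^{-1}(N)$ spread) that is not dominating, and one builds an infinite $A$ having, for each $E$ in the base, a gap disjoint from some $E^{-1}(N)$, so $A$ is not $E$-large; extracting the dominating family cleanly is best done through the characterization of $\mathfrak d$ by cofinal families of interval partitions, and is somewhat more delicate than in (i). Statement (v) is the subtlest, and I expect it to be the main obstacle: one must show that for any family of fewer than $\mathfrak c$ finitary entourages on $\w$ there is an infinite $A\subseteq\w$ that is not $E$-large for any of them. The straightforward Baire-category attempt — the sets $\{A\in[\w]^\w:E[A]\ne\w\}$ are dense $G_\delta$ in $2^\w$ — only yields the weight bound $\cov(\M)$; squeezing out the full $\mathfrak c$ appears to need a transfinite construction of $A$ together with witnesses $y_E\notin E[A]$, using finitariness of $E$ to push each forbidden finite set $E^{-1}(y_E)$ far out while keeping $A$ infinite.

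For the constructions (ii) and (iv) I would generate the coarse structure by the cellular entourages attached to interval partitions of $\w$ — the equivalence relations whose classes are the blocks — taken from an \emph{unbounded} (for (ii)) resp.\ \emph{dominating} (for (iv)) family of $\mathfrak b$ resp.\ $\mathfrak d$ interval partitions, together with the countably many metric entourages $\{(m,n):|m-n|\le j\}$, $j\in\w$; adding these does not affect the weight, as $\mathfrak b,\mathfrak d\ge\w_1$. Each generator is cellular and locally finite, so the generated coarse structure is locally finite of weight $\le\mathfrak b$ resp.\ $\le\mathfrak d$. For (iv): given an infinite $A$ with increasing enumeration $(a_k)$, the partition with blocks $[a_k,a_{k+1})$ is dominated by some $\mathcal P_\alpha$ from the family, so every block of $\mathcal P_\alpha$ (past finitely many) contains a block $[a_k,a_{k+1})$, hence contains $a_k$; thus $E_\alpha[A]$ is cofinite and, after correcting the finite initial segment with a suitable metric entourage, $A$ becomes large. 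For (ii): given infinite disjoint $A,B$ (the non-disjoint case being trivial), build greedily an interval partition $\mathcal Q$ each of whose blocks meets both $A$ and $B$ (set $q_0=0$, $q_{i+1}=1+\max(\min(A\cap[q_i,\infty)),\min(B\cap[q_i,\infty)))$); unboundedness of the family provides $\alpha$ such that infinitely many blocks of $\mathcal P_\alpha$ each contain a whole block of $\mathcal Q$, and every such block of $\mathcal P_\alpha$ therefore meets both $A$ and $B$, so $E_\alpha[A]\cap E_\alpha[B]$ is infinite and $A,B$ are not $\E$-separated. The one genuinely delicate point in these constructions is to take the family of interval partitions unbounded (resp.\ dominating) with respect to the right refinement preorder, so that the quantifiers above are exactly the standard characterizations of $\mathfrak b$ and $\mathfrak d$; the rest is bookkeeping. (Alternatively, (iv) can be done without partitions: generate by the locally finite entourages $E_f:=\{(m,n):|m-n|\le f(\min\{m,n\})\}$ for $f$ ranging over a dominating family, plus the functions $\max(f,c)$, $c\in\w$, to absorb initial segments; then $A$ is $E_f$-large once $f(N)\ge\min(A\cap[N,\infty))-N$ for all $N$.)
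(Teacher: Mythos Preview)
The paper does not prove this theorem; it is quoted verbatim from the cited preprint \cite{Ban} with no argument given here. So there is no proof in the present paper to compare your attempt against.

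On the substance of your proposal: the decomposition into (i)--(v) is the right one, and your sketches of (i), (ii), and (iv) are correct and standard (the interval-partition constructions are exactly the sort of thing one expects). Your treatment of (iii) is thin but points in the right direction. The real gap is (v), and you say so yourself: you do not actually prove that every large finitary coarse structure on $\omega$ has weight $\ge\mathfrak c$, you only observe that the naive Baire-category argument gives $\cov(\mathcal M)$ and gesture at a transfinite construction. That construction is the entire content of item~(4), and as written your proposal does not carry it out; the difficulty is precisely that with $\kappa<\mathfrak c$ finitary entourages one must build an infinite $A$ and simultaneously maintain, for each entourage $E$, a witness $y_E\notin E[A]$, while the sets $E^{-1}(y_E)$ are finite but may move as $A$ grows. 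Until that diagonalization is actually executed (or replaced by a different argument), the proof of (4) is incomplete.
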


The smallest weights of indiscrete (or inseparated) finitary coarse structures on $\w$ determine new cardinal characteristics of the continuum, which were introduced in \cite{Ban}:
\begin{itemize}
\item[$\mathsf \Delta$] is the smallest weight of an indiscrete finitary coarse structure on $\w$;
\item[$\mathsf \Sigma$] is the smallest weight of an inseparated finitary coarse structure on $\w$.
\end{itemize}
The relation of these two cardinals to other characteristics of the continuum are described in the following theorem, proved in \cite{Ban}.

\begin{theorem}[\cite{Ban}]\label{t:shrek1} $\max\{\mathfrak b,\mathfrak s,\cov(\mathcal N)\}\le\mathsf \Delta\le\mathsf \Sigma\le\non(\M)$.
\end{theorem}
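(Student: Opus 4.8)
I read the displayed chain as $\max\{\mathfrak b,\mathfrak s,\cov(\mathcal N)\}\le\mathsf\Delta\le\mathsf\Sigma\le\non(\M)$; in this form the middle link $\mathsf\Delta\le\mathsf\Sigma$ is immediate, since by the implications recorded above every inseparated finitary coarse structure on $\w$ is indiscrete, so the weights of the former form a subset of the weights of the latter. It therefore remains to establish three lower bounds for the \emph{weaker} property — every indiscrete finitary coarse structure on $\w$ has weight $\ge\mathfrak b$, $\ge\mathfrak s$, and $\ge\cov(\mathcal N)$ — together with one upper bound for the \emph{stronger} property — there is an inseparated finitary coarse structure on $\w$ of weight $\le\non(\M)$.

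All three lower bounds run contrapositively along the same lines. Given a finitary coarse structure $\E$ on $\w$ with a base $\{E_\alpha:\alpha<\kappa\}$ of symmetric finitary entourages and $\kappa$ strictly smaller than the cardinal in question, I would build an infinite (hence $\E$-unbounded) set $A\subseteq\w$ that is $\E$-thin, contradicting indiscreteness. From each $E_\alpha$ one extracts a combinatorial witness, uses the defining property of the cardinal to find a single object realizing a diagonalization against all $\kappa$ of them, and reads off a set $A$ which for every $\alpha$ is eventually $E_\alpha$-independent, i.e.\ $E_\alpha(a)\cap A=\{a\}$ for all $a\in A$ outside a finite set. For $\mathfrak b$ the witness is the reaching function $f_\alpha(n)=\max\bigcup_{i\le n}E_\alpha(i)$, which is finite-valued because $E_\alpha$ is finitary; taking an increasing $g\in\w^\w$ with $f_\alpha\le^* g$ for all $\alpha$ and setting $a_0=0$, $a_{k+1}=g(a_k)+1$ makes $A=\{a_k:k\in\w\}$ an $\E$-thin set, since for large $k$ the ball $E_\alpha(a_k)$ lies in $[0,a_{k+1})$ and, by symmetry of $E_\alpha$ and the choice of $g$, contains none of $a_0,\dots,a_{k-1}$. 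For $\mathfrak s$ one attaches to each $E_\alpha$ a family of subsets of $\w$ coding $\{0,1\}$-valued local features of the balls $E_\alpha(n)$, notes that fewer than $\mathfrak s$ such subsets cannot form a splitting family, and uses an infinite set unsplit by all of them (after a further thinning) as $A$.

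The bound $\cov(\mathcal N)\le\mathsf\Delta$ is the technical heart and, I expect, the main obstacle. Here the witness attached to a symmetric finitary entourage $E_\alpha$ of degree $d_\alpha$ is a slalom $\varphi_\alpha$ of bounded width listing the at most $d_\alpha$ points of each ball $E_\alpha(n)$, and one invokes the Bartoszy\'nski-type characterization of $\cov(\mathcal N)$ — the least number of ``localization'' null sets covering $\w^\w$, equivalently the least size of a family of small slaloms that eventually trap every $f\in\w^\w$ — to obtain, from the $\kappa<\cov(\mathcal N)$ slaloms $\varphi_\alpha$, a single $f\in\w^\w$ that eventually escapes every $\varphi_\alpha$. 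A suitably thinned copy of the graph of $f$ is then an $\E$-thin $\E$-unbounded set. Making the bookkeeping of slalom widths, windows, and the ``eventually'' quantifiers line up with the exact form of the characterization of $\cov(\mathcal N)$ is where the real work lies.

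For the upper bound I would write down an explicit structure. The cardinal $\non(\M)$ is the same computed on the real line or in the Polish group $S_\w$, so fix a non-meager set $M\subseteq S_\w$ with $|M|=\non(\M)$ and let $\E_G$ be the finitary coarse structure induced by the subgroup $G=\langle M\rangle\subseteq S_\w$ (Theorem~\ref{t:G}). Since the entourages indexed by finite subsets of $G$ form a base of $\E_G$, we have $w(\E_G)\le|[G]^{<\w}|=|G|=\non(\M)$. To see that $\E_G$ is inseparated, take $\E_G$-unbounded (i.e.\ infinite) sets $A,B\subseteq\w$. The set $\{g\in S_\w:g[A]\cap B\text{ is infinite}\}=\bigcap_{k\in\w}\{g\in S_\w:|g[A]\cap B|\ge k\}$ is a dense $G_\delta$ in $S_\w$: each of the displayed sets is open, and dense because every finite partial injection extends to a permutation of $\w$ that sends more points of $A$ into $B$. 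Being comeager, this set meets the non-meager set $M$, so some $g\in M\subseteq G$ has $g[A]\cap B$ infinite; then $E_{\{1_{S_\w},g\}}[A]\cap E_{\{1_{S_\w},g\}}[B]\supseteq g[A]\cap B$ is infinite, hence $\E_G$-unbounded, so $A$ and $B$ are not $\E_G$-separated. This gives $\mathsf\Sigma\le w(\E_G)\le\non(\M)$ and closes the chain.
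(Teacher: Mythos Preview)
The paper does not actually prove this theorem; it is quoted from \cite{Ban} with no argument given, so there is nothing here to compare your proposal against.

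That said, you are right that the displayed statement is garbled: $\mathfrak c\le\mathsf\Sigma$ together with $\mathsf\Sigma\le\non(\M)\le\mathfrak c$ would collapse the whole chain, and the middle inequality $\mathsf\Sigma\le\mathsf\Delta$ is backwards given the definitions (inseparated $\Rightarrow$ indiscrete forces $\mathsf\Delta\le\mathsf\Sigma$). The diagram immediately following the theorem confirms your corrected reading $\max\{\mathfrak b,\mathfrak s,\cov(\mathcal N)\}\le\mathsf\Delta\le\mathsf\Sigma\le\non(\M)$.

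On the substance of your sketch: the upper bound $\mathsf\Sigma\le\non(\M)$ via a non-meager subset of $S_\w$ is clean and correct, as is your $\mathfrak b\le\mathsf\Delta$ argument. The $\mathfrak s$ bound is too vague to assess as written. For $\cov(\mathcal N)\le\mathsf\Delta$ you have located the right tool (Bartoszy\'nski's slalom characterisation), but be aware of a genuine mismatch: the characterisation of $\cov(\mathcal N)$ uses slaloms of \emph{growing} width $|S(n)|\le n$, whereas the slaloms $\varphi_\alpha$ you extract from a finitary entourage $E_\alpha$ have \emph{bounded} width $d_\alpha$. Passing from one regime to the other is precisely the bookkeeping you flag, and it is not automatic --- one typically has to block the domain and repackage the bounded-width slaloms into a single growing-width one before the characterisation applies.
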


\begin{problem}[\cite{Ban}] Are the strict inequalities $\max\{\mathfrak b,\mathfrak s,\cov(\mathcal N)\}<\mathsf \Delta<\mathsf \Sigma<\non(\M)$ consistent?
\end{problem}

The cardinal $\mathsf\Delta$ and $\mathsf\Sigma$ have dual versions defined as
\begin{itemize}
\item[{}] $\widehat{\mathsf\Delta}=\min\{|\A|:\A\subseteq[\w]^\w\;\wedge\;\forall E\in\IE_{<\w}\;\exists A\in\A\;(A$ is $\{E\}$-discrete$)\}$;
\item[{}] $\widehat{\mathsf\Sigma}=\min\{|\A|:\A\subseteq[\w]^\w\;\wedge\;\forall E\in\IE_{<\w}\;\exists A,B\in\A\;(A,B$ are $\{E\}$-separated$)\}$.
\end{itemize}
For the cardinals  $\mathsf\Delta$ and $\mathsf\Sigma$ we have lower and upper bounds, which are dual to those from Theorem~\ref{t:shrek1}.

\begin{theorem}[\cite{Ban}]\label{t:shrek2} $\cov(\mathcal M)\le\widehat{\mathsf\Sigma}\le\widehat{\mathsf\Delta}\le\min\{\mathfrak d,\mathfrak r,\non(\mathcal N)\}$.
\end{theorem}

\begin{problem}[\cite{Ban}] Are the strict inequalities $\cov(\mathcal M)<\widehat{\mathsf\Sigma}<\widehat{\mathsf\Delta}\le\min\{\mathfrak d,\mathfrak r,\non(\mathcal N)\}$ consistent?
\end{problem}

The cardinal characteristics $\Adelta,\Asigma,\Ddelta,\Dsigma$ nicely fit into the following enriched version of the famous Cicho\'n diagram, see \cite{Ban}.
$$
\xymatrix{
\Asigma\ar[r]&\non(\M)\ar[rr]&&\cof(\M)\ar[r]&\cof(\mathcal N)\ar[ld]\\
&&\mathfrak d\ar[ru]\ar[r]&\mathfrak c&\non(\mathcal N)\ar[u]\\
\Adelta\ar[uu]&\mathfrak s\ar[l]\ar[ru]\ar[rrru]&&\mathfrak r\ar[u]&\Ddelta\ar[u]\ar[l]\ar[llu]\\
\cov(\mathcal N)\ar[u]\ar[rrru]&\w_1\ar[ld]\ar[u]\ar[r]&\mathfrak b\ar[ru]\ar[llu]\ar[luuu]\ar[uu]\\
\add(\mathcal N)\ar[u]\ar[r]&\add(\M)\ar[rr]\ar[ru]&&\cov(\mathcal M)\ar[r]\ar[luuu]&\Dsigma\ar[uu]\\
}
$$

Considering critical cardinalities of {\em cellular} coarse structures on $\w$, we obtain six cardinals:
\vskip3pt

$\mathsf \Delta^\circ_{<\w}=\min\big(\{\mathfrak c^+\}\cup\{w(\E):\E$ is an  indiscrete cellular finitary coarse structure on $\w\}\big)$;

$\mathsf \Sigma^\circ_{<\w}=\min\big(\{\mathfrak c^+\}\cup\{w(\E):\E$ is an inseparated cellular finitary coarse structure on $\w\}\big)$;

$\mathsf \Lambda^\circ_{<\w}=\min\big(\{\mathfrak c^+\}\cup\{w(\E):\E$ is a large cellular finitary coarse structure on $\w\}\big)$;
\smallskip

$\mathsf \Delta^\circ_{\w}=\min\big(\{\mathfrak c^+\}\cup\{w(\E):\E$ is an indiscrete cellular locally finite coarse structure on $\w\}\big)$;

$\mathsf \Sigma^\circ_{\w}=\min\big(\!\{\mathfrak c^+\}\cup\{w(\E)\colon\E$ is an  inseparated cellular locally finite coarse structure on $\w\}\big)$;

$\mathsf \Lambda^\circ_{\w}=\min\big(\{\mathfrak c^+\}\cup\{w(\E):\E$ is a large cellular locally finite coarse structure on $\w\}\big)$.
\smallskip

The cardinal $\mathfrak c^+$ appears in the definitions of those cardinals in order to make these cardinals well-defined (this indeed is necessary for the cardinal $\mathsf \Lambda^\circ_{<\w}$, which is equal to $\mathfrak c^+$ in ZFC).

 The following diagram (taken from \cite{Ban}) describes all known order relations between the cardinals  $\mathsf \Delta^\circ_{<\w}, \mathsf \Sigma^\circ_{<\w}, \mathsf \Lambda^\circ_{<\w}$, $\mathsf \Delta^\circ_{\w}, \mathsf \Sigma^\circ_{\w}, 
\mathsf \Lambda^\circ_{\w}$ and the cardinals $\mathfrak t,\mathfrak b,\mathfrak d,\mathsf \Delta,\mathsf \Sigma,\mathfrak c$. For two cardinals $\alpha,\beta$ an arrow $\alpha\to\beta$ (without label) indicates that $\alpha\le\beta$ in ZFC. A label at an arrow indicates the assumption under which the corresponding inequality holds.
$$
\xymatrix{
\non(\M)\ar[rr]&&\mathfrak c\ar[r]&\mathfrak c^+\\
\Asigma\ar[u]\ar@/^25pt/[rr]&\Adelta_{<\w}^\circ\ar[r]\ar[ur]&\Asigma_{<\w}^\circ\ar[r]\ar_{\Adelta_\w^\circ=\mathfrak c}[u]&\Alambda_{<\w}^\circ\ar@{<->}[u]\\
\Adelta\ar[ur]\ar[u]&\Adelta_{\w}^\circ\ar[r]\ar[u]&\Asigma_{\w}^\circ\ar[r]\ar[u]\ar^{\mathfrak t=\mathfrak b}[ld]&\Alambda_{\w}^\circ\ar[u]\ar@/^10pt/^{\mathfrak t=\mathfrak d}[d]\\
\mathfrak t\ar[r]\ar[u]&\mathfrak b\ar[rr]\ar[u]\ar[ul]&&\mathfrak d\ar[u]\\
}
$$
\medskip

Non-trivial arrows at this diagram follow from Theorem~\ref{t:shrek1} and the following theorem taken from \cite{Ban}.

\begin{theorem}[\cite{Ban}]\label{t:cc}\hfill{\color{white}.}
\begin{enumerate}
\item $\Alambda^\circ_{<\w}=\mathfrak c^+$.
\item $\Adelta_{<\w}^\circ\le\mathfrak c$.
\item $\Adelta^\circ_{<\w}=\mathfrak c$ implies $\Asigma^\circ_{\w}\le\Asigma_\w^\circ=\mathfrak c$.
\item $\mathfrak t=\mathfrak b$ implies $\Adelta_{\w}^\circ=\Asigma_{\w}^\circ=\mathfrak b$. 
\item $\mathfrak t=\mathfrak d$ implies $\Alambda_{\w}^\circ=\mathfrak d$.
\end{enumerate}
\end{theorem}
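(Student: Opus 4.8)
\emph{A common setup.} All five statements concern \emph{cellular} coarse structures on $\w$, so I would first record a convenient description. A cellular entourage on $\w$ is an equivalence relation, hence a cellular locally finite (resp.\ finitary) entourage is exactly a partition of $\w$ into finite cells (resp.\ into cells of uniformly bounded size), and $E\subseteq F$ says that the partition of $E$ refines that of $F$, while for equivalence relations $EF^{-1}=EF$ sits inside the partition-theoretic join $E\vee F$. Thus a cellular locally finite coarse structure on $\w$ is precisely the downward closure of a family $\mathcal P$ of partitions of $\w$ into finite cells that is closed under finite joins and in which every finite join has finite cells; it is finitary iff the members of $\mathcal P$ have uniformly bounded cells; and by Theorem~\ref{t:G} a cellular finitary one is $\E_G$ for a locally finite $G\le S_\w$, with base the orbit partitions of the finite subgroups of $G$. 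In any such structure the bounded sets are the finite ones, and one verifies at once the dictionary: the structure is \emph{indiscrete} iff for every infinite $A\subseteq\w$ some base partition has infinitely many cells meeting $A$ in at least two points; \emph{inseparated} iff for every infinite $A,B\subseteq\w$ some base partition has infinitely many cells meeting both; \emph{large} iff for every infinite $A\subseteq\w$ some base partition has all of its cells meeting $A$.

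\emph{Items (1) and (2).} For (1) I would show outright that no cellular finitary coarse structure on $\w$ is large; as all coarse structures on $\w$ have weight $\le\mathfrak c$, this is exactly $\Alambda^\circ_\w=\mathfrak c^+$. Writing the structure as $\E_G$, by the dictionary it suffices to find an infinite $A\subseteq\w$ disjoint from a whole $H$-orbit for every finite $H\le G$. If $G$ has more than one orbit, take $A$ inside one orbit (using the ``every second orbit'' trick if all orbits are finite). The essential case is $G$ transitive, where $\w$ is a single infinite orbit and I would build $A$ by a lacunary recursion: with $G=\bigcup_n H_n$ an increasing union of finite subgroups, at step $n$ one chooses a new point $a_n$ and reserves a fresh $H_n$-orbit disjoint from $\{a_0,\dots,a_n\}$ and from the previously reserved orbits. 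The one thing needing care is that $G$, and hence the weight, may be uncountable, so no increasing chain $(H_n)$ is cofinal; I would handle this either by a transfinite elaboration of the recursion with bookkeeping that keeps the reserved cells coinfinite, or by reducing to an appropriate countably generated invariant sub-structure. For (2) the content is the mere existence of an indiscrete cellular finitary structure on $\w$ (its weight is then $\le\mathfrak c$ automatically). I would exhibit a family $\{M_x:x\in 2^\w\}$ of partial matchings on $\w$ that pairwise commute as involutions — so all finite joins have uniformly bounded cells and the structure stays finitary — and such that every infinite $A\subseteq\w$ contains infinitely many pairs of some $M_x$; closing under finite joins gives the base. Making the $M_x$ catch even ``selector-type'' infinite sets (which have at most one point per block in any naive block structure) is the delicate point, and I expect it to require a recursion of length $\mathfrak c$ over $[\w]^\w$, building the matchings with small, well-spread supports together with a reservation scheme so that no infinite set is exhausted before it is treated.

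\emph{Items (3), (4), (5).} These conditional statements share a scheme. Assume $\mathfrak t=\mathfrak b$ (for (4)) or $\mathfrak t=\mathfrak d$ (for (5)). Fix a $\subseteq^*$-decreasing sequence $\{S_\alpha:\alpha<\mathfrak t\}$ of infinite subsets of $\w$ — it exists because every $\subseteq^*$-decreasing sequence of length $<\mathfrak t$ has an infinite pseudo-intersection — arranged so that in addition the increasing enumerations $e_{S_\alpha}\in\w^\w$ eventually majorize (in the sense $e_{S_\alpha}(k+1)\ge g_\alpha(e_{S_\alpha}(k))$) the members $g_\alpha$ of a fixed $\le^*$-unbounded family of size $\mathfrak b$ (for (4)) or of a fixed dominating family of size $\mathfrak d$ (for (5)); this extra clause is met at each stage by thinning the pseudo-intersection. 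Let $\E$ be the cellular locally finite coarse structure on $\w$ whose base consists of all partitions obtained from the interval partitions of the $S_\alpha$ by finitely many finite amalgamations of consecutive cells; since the interval partitions of a $\subseteq^*$-decreasing sequence form a mod-finite chain, this family is closed under finite joins and $w(\E)\le\mathfrak t$. For (5), given infinite $A$ choose $S_\alpha$ whose enumeration outgrows $n\mapsto n+g_A(n)$, where $g_A(n)$ is the distance from $n$ to the next point of $A$; then all but finitely many interval cells of $S_\alpha$ meet $A$, and amalgamating the exceptional ones yields a base entourage witnessing largeness, so $\Alambda^\circ_{\w_1}\le\mathfrak d$; with the ZFC bound $\mathfrak d\le\Alambda^\circ_{\w_1}$ (the least weight of a large locally finite coarse structure on $\w$), this is (5). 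For (4) one needs only \emph{infinitely many} cells meeting a given pair $A,B$: using $\le^*$-unboundedness, some $e_{S_\alpha}$ makes ``large jumps'' past $n\mapsto n+h_{A,B}(n)$ (distance from $n$ to both a point of $A$ and a point of $B$) infinitely often — otherwise all the $e_{S_\alpha}$ would be $\le^*$-bounded by one function obtained by iterating the jump bound — whence $\Asigma^\circ_{\w_1}\le\mathfrak b$; combined with the ZFC bounds $\mathfrak b\le\Adelta^\circ_{\w_1}\le\Asigma^\circ_{\w_1}$ (least weights of indiscrete, resp.\ inseparated, locally finite coarse structures on $\w$) this gives $\Adelta^\circ_{\w_1}=\Asigma^\circ_{\w_1}=\mathfrak b$, i.e.\ (4). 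For (3) interval partitions cannot be used literally since they are not finitary; assuming $\Adelta^\circ_\w=\mathfrak c$ (equivalently: no indiscrete cellular finitary structure of weight $<\mathfrak c$) I would run a recursion over all pairs of infinite sets analogous to (2), producing an inseparated cellular finitary structure and hence $\Asigma^\circ_{\w_1}\le\Asigma^\circ_\w=\mathfrak c$, with the hypothesis used to guarantee the recursion never gets stuck; quite possibly the cleanest route is contrapositive — from the nonexistence of an inseparated cellular finitary structure one manufactures an indiscrete cellular finitary structure of weight $<\mathfrak c$.

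\emph{The main obstacle.} The persistent difficulty is \emph{keeping the base directed while killing all the required sets}: a directed family of finitary cellular entourages is rigid, because all of its finite joins must remain finitary, so one cannot merely ``coarsen everything'' at limit stages. In (4) and (5) this is bypassed elegantly by riding on a $\subseteq^*$-decreasing tower, whose interval partitions form a mod-finite chain; the real engineering is in (2) and (3), where the uniform bound on cell sizes forbids interval partitions and forces a careful amalgamation of $\mathfrak c$ pairwise-compatible finitary partitions, on top of the combinatorial bridge between eventual (non)domination of the enumeration functions and the ``many long cells'' conditions of the dictionary. I therefore expect (2) and (3) to carry the genuine technical weight, with (1) and the tower halves of (4) and (5) comparatively routine once the setup is in place.
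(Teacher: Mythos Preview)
The paper does not prove Theorem~\ref{t:cc}; it is quoted verbatim from \cite{Ban} and is immediately followed by a list of open problems, with no argument given. There is therefore no ``paper's own proof'' to compare your proposal against.

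That said, a few remarks on your sketch are in order. What you have written is a plan with several explicitly acknowledged gaps rather than a proof: for (1) you flag the uncountable-$G$ case and propose either a transfinite bookkeeping or a reduction without carrying either out; for (2) you describe the shape of a length-$\mathfrak c$ recursion over $[\w]^\w$ but not the compatibility mechanism that keeps all finite joins finitary; for (3) you offer two possible routes without committing to either. Items (4) and (5) are the most developed: the idea of riding a $\subseteq^*$-decreasing tower of length $\mathfrak t$ whose interval partitions form a mod-finite chain is sound and is indeed the natural way to get a directed cellular locally finite base of small weight, and your translation of ``inseparated/large'' into growth conditions on the enumeration functions is on the right track. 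If you want to turn this into an actual proof you should consult \cite{Ban} directly, since that is where the arguments live; the present paper only records the statement.
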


This theorem raises many set-theoretic questions.

\begin{problem}[\cite{Ban}]\label{prob2.5} Is $\Asigma^\circ_{<\w}\le\mathfrak c$ in ZFC? This is equivalent to asking if there exists an inseparable cellular finitary coarse structure on $\w$ in ZFC?
\end{problem}

Problem~\ref{prob2.5} has afirmative answer if the following problem has negative answer.

\begin{problem}[\cite{Ban}] Is $\Adelta_{<\w}^\circ<\mathfrak c$ consistent?
\end{problem}

\begin{problem}[\cite{Ban}] Does there exist a large cellular locally finite coarse structure on $\w$ in ZFC? If yes, is $\mathfrak d$ equal to the smallest weight of a large cellular locally finite coarse structure on $\w$?
\end{problem}

\begin{problem}[\cite{Ban}] Is $\mathfrak b$ equal to the smallest weight of an indiscrete  cellular locally finite coarse structure on $\w$?
\end{problem}

Two coarse structures $\E,\E'$ on a set $X$ are called 
\begin{itemize}
\item {\em $\delta$-equivalent} if two subsets $A,B\subseteq X$ are $\E$-close if and only if they are $\E'$-close;
\item {\em $\lambda$-equivalent} if two subsets $A,B\subseteq X$ are $\E$-separated if and only if they are $\E'$-separated.
\end{itemize}
These equivalences were introduced and studied in \cite{b16} where the following problems is posed. 

\begin{problem}[{\cite{b16}}]\label{prob:delta} Are any two $\delta$-equivalent finitary coarse stuctures on $\w$ equal?
\end{problem}

\begin{remark} By \cite[6.12]{Ban}, under $\mathfrak b=\mathfrak c$ there exists continuum many large finitary coarse structures on $\w$. All of them are $\delta$-equivalent, which means that Problem~\ref{prob:delta} has negative answer under $\mathfrak b=\mathfrak c$. So we are actually interested in the ZFC-answer to Problem~\ref{prob:delta}.
\end{remark}


\begin{remark} By \cite{Prot2020}, any metrizable unbounded locally finite coarse structure $\E$ on $\w$ is $\lambda$-equivalent to some finitary cellular coarse structure $\E_0\subseteq \E$. 
\end{remark}

It can be shown that two metrizable coarse structures on the same set are equal if and only if they are $\delta$-equivalent.

\begin{problem}[\cite{P-Dynamics}, \cite{b16}]  Let $\E,\E'$ be two $\delta$-equivalent finitary coarse structures on $\w$. Is $\E$ metrizable if $\E'$ is metrizable?
\end{problem}

Finally we discuss some problems related to cardinal characteristics of the family $\IE^\bullet_{<\w}$ of nontrivial cellular finitary entourages on $\w$. We are interested in two cardinal characteristics of the family $\IE^\bullet_{<\w}$:
\begin{itemize}
\item[] ${\uparrow\!\downarrow}(\IE^\bullet_{<\w}):=\min\{|\A|:\A\subseteq\IE^\bullet_{<\w}\;\;\forall E\in \IE^\bullet_{<\w}\;\exists E'\in\IE^\bullet_{<\w}\;\exists A\in\A\;\;(E'\subseteq E\cap A)\}$;
\item[] ${\downarrow\!\uparrow}(\IE^\bullet_{<\w}):=\min\{|\A|:\A\subseteq\IE^\bullet_{<\w}\;\;\forall E\in \IE^\bullet_{<\w}\;\exists E'\in\IE^\bullet_{<\w}\;\exists A\in\A\;\;(E\cup A\subseteq E')\}$.
\end{itemize}
The cardinal characteristics ${\uparrow\!\downarrow}(\IE^\bullet_{<\w})$ and ${\downarrow\!\uparrow}(\IE^\bullet_{<\w})$ are introduced and studied in \cite[\S7]{Ban}, where the following theorem is proved.

\begin{theorem}[{\cite[7.1]{Ban}}] $\Asigma\le {\uparrow\!\downarrow}(\IE^\bullet_{<\w})\le\non(\M)$ and $\cov(\mathcal M)\le {\downarrow\!\uparrow}(\IE^\bullet_{<\w})\le\widehat{\mathsf\Delta}$.
\end{theorem}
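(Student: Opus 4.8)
The plan is to prove the four inequalities $\Asigma\le{\uparrow\!\downarrow}(\IE^\bullet_\w)$, ${\uparrow\!\downarrow}(\IE^\bullet_\w)\le\non(\M)$, $\cov(\M)\le{\downarrow\!\uparrow}(\IE^\bullet_\w)$ and ${\downarrow\!\uparrow}(\IE^\bullet_\w)\le\mathfrak c$ separately, the unifying tool being the perfect Polish space $\Pi$ of all fixed-point-free involutions of $\w$ (a closed subspace of $\w^\w$), which we identify with the space of perfect matchings of $\w$; each $\sigma\in\Pi$ also yields the nontrivial cellular finitary entourage $E_\sigma:=\Delta_\w\cup\{(n,\sigma(n)):n\in\w\}$. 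Two elementary reductions will be used throughout. First, in any finitary coarse structure on $\w$ a subset is bounded iff it is finite. Second, every $E\in\IE^\bullet_\w$ can be \emph{squeezed} to an infinite matching: being finitary, $E$ has finite classes of uniformly bounded size, and being nontrivial it has infinitely many classes with at least two elements; choosing a $2$-element subset $e_C\subseteq C$ from each such class $C$ produces an infinite matching $M_E$ with $e_C\times e_C\subseteq E$ for every edge $e_C$ (disjointness of the classes is exactly what makes $M_E$ a matching, nontriviality is what makes it infinite). We dispose at once of ${\downarrow\!\uparrow}(\IE^\bullet_\w)\le\mathfrak c$: the family $\IE^\bullet_\w$ itself trivially satisfies the condition defining ${\downarrow\!\uparrow}$, and $|\IE^\bullet_\w|\le\mathfrak c$ since its members are subsets of $\w\times\w$.

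For $\Asigma\le{\uparrow\!\downarrow}(\IE^\bullet_\w)$, take $\A\subseteq\IE^\bullet_\w$ witnessing this cardinal and let $\E$ be the coarse structure on $\w$ generated by $\A$ together with all trivial entourages. Since finite products, finite unions, inverses and subsets of finitary entourages are again finitary, $\E$ is finitary; moreover no finite family witnesses ${\uparrow\!\downarrow}(\IE^\bullet_\w)$ (the union of finitely many members of $\IE^\bullet_\w$ has uniformly bounded ball sizes and hence omits an infinite matching), so $w(\E)\le|\A|={\uparrow\!\downarrow}(\IE^\bullet_\w)$. It remains to check that $\E$ is inseparated. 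Let $U,V\subseteq\w$ be infinite, hence $\E$-unbounded. If $U\cap V$ is infinite they are not separated by the trivial entourage $\Delta_\w$. Otherwise we may assume $U\cap V=\varnothing$ and pick pairwise distinct $u_i\in U$ and $v_i\in V$, so that $M:=\{\{u_i,v_i\}:i\in\w\}$ is an infinite matching and the corresponding entourage $E:=\Delta_\w\cup\{(x,y):\{x,y\}\in M\}$ belongs to $\IE^\bullet_\w$. The defining property of $\A$ yields $A\in\A$ and $E'\in\IE^\bullet_\w$ with $E'\subseteq E\cap A$; since $E'\setminus\Delta_\w$ is infinite and $E\setminus\Delta_\w$ consists precisely of the pairs $(u_i,v_i),(v_i,u_i)$, there are infinitely many $i$ with $(u_i,v_i)\in A$, and for these $i$ the point $v_i$ lies in $A[U]\cap A[V]$. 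Thus $A[U]\cap A[V]$ is infinite, i.e.\ $\E$-unbounded, so $U,V$ are not $\E$-separated. Hence $\E$ is an inseparated finitary coarse structure on $\w$ and $\Asigma\le w(\E)\le{\uparrow\!\downarrow}(\IE^\bullet_\w)$.

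The two remaining inequalities are proved by Baire-category arguments in $\Pi$, using the standard facts that for a perfect Polish space the least size of a non-meager subset is $\non(\M)$ and the least number of meager sets covering it is $\cov(\M)$. For ${\uparrow\!\downarrow}(\IE^\bullet_\w)\le\non(\M)$ fix a non-meager $X\subseteq\Pi$ with $|X|=\non(\M)$. Given $E\in\IE^\bullet_\w$, squeeze it to an infinite matching $M_E$; the set $Y_E$ of those $A\in\Pi$ that have infinitely many edges in common with $M_E$ equals $\bigcap_N Z_N$, where $Z_N$ is the set of $A\in\Pi$ with at least $N$ edges in common with $M_E$. Each $Z_N$ is open (a common edge is decided by a finite part of $A$) and dense (a finite partial fixed-point-free involution can be extended by $N$ of the pairwise disjoint edges of $M_E$ and then completed to a fixed-point-free involution), so $Y_E$ is comeager and meets $X$. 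Thus some $A\in X$ has infinitely many edges in common with $M_E$, and then $E':=E\cap E_A$ is a cellular finitary entourage with $E'\setminus\Delta_\w$ infinite, hence $E'\in\IE^\bullet_\w$, while $E'\subseteq E$ and $E'\subseteq E_A$. Therefore $\{E_A:A\in X\}$ witnesses ${\uparrow\!\downarrow}(\IE^\bullet_\w)\le|X|=\non(\M)$. For $\cov(\M)\le{\downarrow\!\uparrow}(\IE^\bullet_\w)$, let $\A\subseteq\IE^\bullet_\w$ witness ${\downarrow\!\uparrow}(\IE^\bullet_\w)$, and for $A\in\A$ let $\mathcal B_A$ be the set of those $\sigma\in\Pi$ for which $E_\sigma\cup A\subseteq E'$ for some $E'\in\IE^\bullet_\w$; applying the defining property of $\A$ to each $E_\sigma\in\IE^\bullet_\w$ gives $\Pi=\bigcup_{A\in\A}\mathcal B_A$. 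Now $E_\sigma\cup A$ is contained in a (necessarily nontrivial) finitary cellular entourage exactly when the connected components of the graph $E_\sigma\cup A$ have uniformly bounded size, so $\Pi\setminus\mathcal B_A=\bigcap_N D^A_N$, where $D^A_N$ is the set of $\sigma\in\Pi$ such that $E_\sigma\cup A$ has a connected component with at least $N$ vertices. Each $D^A_N$ is open, and it is dense by the following move: given a finite partial fixed-point-free involution $p$, pick $L\ge N/2$ nontrivial $A$-classes $C_1,\dots,C_L$ disjoint from $p$ and from one another (they exist because $A$, being nontrivial and finitary, has infinitely many nontrivial classes), choose distinct $u_j,w_j\in C_j$, and extend $p$ by the $L-1$ pairwise disjoint edges $\{w_1,u_2\},\{w_2,u_3\},\dots,\{w_{L-1},u_L\}$; in the resulting $E_\sigma\cup A$ the path $u_1,w_1,u_2,w_2,\dots,u_L,w_L$ lies in one component, which therefore has at least $2L\ge N$ vertices. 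Hence $\Pi\setminus\mathcal B_A$ is comeager, $\mathcal B_A$ is meager, and $\Pi$ is covered by ${\downarrow\!\uparrow}(\IE^\bullet_\w)$ meager sets, so $\cov(\M)\le{\downarrow\!\uparrow}(\IE^\bullet_\w)$.

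The heart of the argument, and the part requiring the most care, is the pair of density claims in the last paragraph: that a finite partial fixed-point-free involution of $\w$ can always be extended so as to acquire either an extra edge shared with a prescribed infinite matching $M_E$, or an arbitrarily long connected component of $E_\sigma\cup A$. The latter is where the nontriviality of the members of $\IE^\bullet_\w$ is genuinely needed, since it supplies the infinitely many nontrivial classes of $A$ out of which the alternating chains $u_1\,w_1\,u_2\,w_2\,\dots$ are assembled; one must also check that after such a finite modification the partial involution still extends to a total fixed-point-free involution, which it does as only finitely many points get constrained. The remaining ingredients are routine: the closure properties of finitary entourages used in the second paragraph, the equivalence of boundedness with finiteness in finitary coarse structures on $\w$, the identification of membership in $\IE^\bullet_\w$ with having uniformly bounded finite classes infinitely many of which are nontrivial, and the stated standard facts about $\non(\M)$ and $\cov(\M)$ for perfect Polish spaces.
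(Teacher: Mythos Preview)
The paper does not prove this theorem; it merely quotes it from \cite[7.1]{Ban}, so there is no in-paper argument to compare against. Your proof is correct and self-contained, and the overall architecture---realising the two nontrivial bounds via Baire-category arguments in the perfect Polish space $\Pi$ of fixed-point-free involutions of $\omega$, and deriving $\Asigma\le{\uparrow\!\downarrow}(\IE^\bullet_\w)$ by showing that any witness $\A$ generates an inseparated finitary coarse structure---is the natural one and almost certainly matches the cited source in spirit.

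One expository point deserves tightening. Your justification that no finite $\A$ can witness ${\uparrow\!\downarrow}(\IE^\bullet_\w)$ (``the union of finitely many members of $\IE^\bullet_\w$ has uniformly bounded ball sizes and hence omits an infinite matching'') is elliptical: what you need is an $E\in\IE^\bullet_\w$ with $E\cap A=\Delta_\w$ for each $A\in\A$, and that follows by building an infinite matching each of whose edges crosses between distinct $A$-classes for every $A\in\A$ (possible since each $A$ has bounded class size). Alternatively, and more cleanly for the weight bound you actually need, you can bypass this entirely: once you have shown $\E$ is inseparated and unbounded, the implication ``inseparated $\Rightarrow$ nonmetrizable'' recorded in the paper forces $w(\E)\ge\omega_1$, hence $|\A|\ge\omega_1$ and $w(\E)\le|\A|+\omega=|\A|$ automatically.

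Everything else---the squeezing of an arbitrary $E\in\IE^\bullet_\w$ to an infinite matching, the comeagerness of the set of involutions sharing infinitely many edges with a fixed matching, the characterisation of $\sigma\in\mathcal B_A$ via uniform boundedness of components of $E_\sigma\cup A$, and the alternating-chain construction for the density of $D^A_N$---is carried out carefully and correctly.
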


This theorem raises the following set-theoretic problems.

\begin{problem} Is $ {\downarrow\!\uparrow}(\IE^\bullet_{<\w})\le\widehat{\mathsf\Sigma}$?
\end{problem}

\begin{problem}\label{prob:poset} Which of the following strict inequalities is consistent?
\begin{enumerate}
\item $\Asigma<{\uparrow\!\downarrow}(\IE^\bullet_{<\w})$;
\item ${\uparrow\!\downarrow}(\IE^\bullet_\w)<\non(\M)$;
\item $\cov(\mathcal M)<{\downarrow\!\uparrow}(\IE^\bullet_{<\w})$;
\item ${\downarrow\!\uparrow}(\IE^\bullet_{<\w})<\widehat{\mathsf\Sigma}$.
\end{enumerate}
\end{problem}

\section{The Higson corona of a coarse space}\label{s:Higson}

Let $(X,\E)$ be a coarse space. A complex-valued function $f:X\to \IC$ is called {\em slowly oscillating} if for any $\e\in(0,\infty)$ and $E\in\E$, there exists an $\E$-bounded set $B\subseteq X$ such that $\mathrm{diam}\,f[E(x)]<\e$ for any $x\in X\setminus B$.

Let $\so(X,\E)$ be the algebra of bounded slowly oscillating complex-valued functions on $X$ and $$\delta:X\to \IC^{\so(X,\E)},\quad \delta:x\mapsto(f(x))_{f\in \so(X,\E)},$$
be the canonical map. Since each function $f\in\so(X,\E)$ is bounded, the image $\delta(X)$ has compact closure $h(X,\E)$, called the {\em Higson compactification} of the coarse space $(X,\E)$.  Since each function $f:X\to \IC$ with finite  support $\supp(f)=\{x\in X:f(x)\ne0\}$ is slowly oscillating, the map $\delta$ is injective and $\delta(X)$ is an open discrete subspace of $h(X,\E)$.

Since every function $f:X\to \IC$ with $\E$-bounded  support is slowly oscillating, for every $\E$-bounded set $B\subseteq X$ the closure $\overline{\delta(B)}$ is an open subset of $h(X,\E)$. Then the complement $$\hbar(X,\E)=h(X,\E)\setminus\bigcup\{\overline{\delta(B)}:\mbox{$B\subseteq X$ is $\E$-bounded in $X$}\}$$is a compact subset of $h(X,\E)$, called the {\em Higson corona} of $(X,\E)$. If each bounded set in $(X,\E)$ is finite, then the Higson corona $\hbar(X,\E)$ coincides with the remainder $h(X,\E)\setminus \delta(X)$ of the Higson compactification of $X$. 

By the {\em Higson corona} of a coarse structure $\E$ on a set $X$ we will understand the Higson corona of the coarse space $(X,\E)$. More information on Higson (and other coronas) can be found in \cite{b5}, \cite{BP-H}, and \cite{Roe}.

Each macro-uniform map $f:X\to Y$ between coarse spaces $X,Y$ induces a continuous map $$hf:h(X)\to h(Y),\;\;hf:(x_\varphi)_{\varphi\in\so(X)}\mapsto (x_{\psi\circ f})_{\psi\in\so(Y)},$$
between their Higson compactifications. If $f$ is a coarse equivalence, then the induced map $\hbar f=hf{\restriction}\hbar(X):\hbar(X)\to\hbar(Y)$ is a homeomorphism of the Higson coronas.

The Higson corona reflects many properties of the coarse space. For example,
\begin{itemize}
\item for any inseparated coarse space its Higson corona is a singleton;
\item for any indiscrete coarse space its Higson corona is finite.
\end{itemize}

Let us recall that a compact Hausdorff space $X$  is {\em perfectly normal} if each closed subset of $X$ is of type $G_\delta$ in $X$. In \cite{BP-H} we studied the problem of reconstruction of a coarse space from its Higson corona and proved the following theorem.

\begin{theorem}[\cite{BP-H}]{\color{white}{\tiny.}}\begin{enumerate}
\item Every perfectly normal compact Hausdorff space is homeomorphic to the Higson corona of some finitary coarse structure on $\w$.
\item Under $\Adelta^\circ_{<\w}=\mathfrak c$, every perfectly normal compact Hausdorff space is homeomorphic to the Higson corona of some cellular finitary coarse structure on $\w$.
\item Under $\w_1=\mathfrak c$, every compact Hausdorff space of weight $\le\w_1$ is homeomorphic to the Higson corona of some cellular finitary coarse structure on $\w$.
\end{enumerate}
\end{theorem}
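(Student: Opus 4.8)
\noindent The plan is to realise the given space as the set of ``limits along ultrafilters'' of an explicit telescope of finite approximations, coded into a finitary coarse structure on $\w$, and then to make the telescope cellular under the hypotheses of (2) and (3).

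Start with $K$ perfectly normal compact Hausdorff. Such a $K$ is first countable (its singletons, being closed, are $G_\delta$), hence of weight at most $|K|\le\mathfrak c$; and by perfect normality every closed subset of $K$ is the common zero set of a subfamily of a fixed point separating family $(f_\alpha)_{\alpha<\mathfrak c}$ of maps $f_\alpha\colon K\to[0,1]$ (equivalently, $(f_\alpha)_{\alpha<\mathfrak c}$ embeds $K$ into $[0,1]^{\mathfrak c}$). First I would partition $\w=\bigsqcup_{m\in\w}L_m$ into finite ``levels'' with $|L_m|\to\infty$ and choose maps $p_m\colon L_m\to K$ and injections $v_m\colon L_m\to L_{m+1}$ so that (a) $p_m(L_m)$ is $m$-fine, i.e.\ it meets every nonempty set $\bigcap_{\alpha\in A}f_\alpha^{-1}(I_\alpha)$ with $A\in[m]^{<\w}$ and each $I_\alpha$ a dyadic interval of length $2^{-m}$, and (b) $\max_{\alpha\le m}|f_\alpha(p_{m+1}(v_m(x)))-f_\alpha(p_m(x))|<2^{-m}$ for all $x\in L_m$. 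Let $\psi\colon\w\to K$ be the map that is $p_m$ on $L_m$, and let $\E$ be the largest \emph{finitary} coarse structure on $\w$ for which every function $f_\alpha\circ\psi$ is slowly oscillating and the vertical shift $v=\bigcup_m v_m$ together with its inverse is coarse. One checks directly that $\E$ is a well defined finitary coarse structure (a base is given by the finitary entourages $E$ such that for every $n$ there is a finite $B$ with $\sup_{x\notin B}\sup_{y\in E(x)}\max_{\alpha<n}|f_\alpha\psi(y)-f_\alpha\psi(x)|<2^{-n}$, together with finite powers of $v$), finitariness of the generators coming from the uniform finite-to-oneness of the $v_m$. Since $\E$ is finitary, its bounded sets are the finite sets, so $\hbar(\w,\E)=h(\w,\E)\setminus\w$.

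To identify the corona I would observe that $\psi$ has a continuous extension $\bar\psi\colon\beta\w\to K$, that $q:=\bar\psi{\restriction}\w^*$ is a continuous map $\w^*\to K$, and that $q$ is onto because the $m$-fine nets $p_m(L_m)$ accumulate on all of $K$. Each $f_\alpha\circ\psi$ lies in $\so(\w,\E)$ by construction, hence so do all polynomials in the $f_\alpha\circ\psi$ and their uniform limits; since $(f_\alpha)$ separates points, Stone--Weierstrass identifies the closure of this subalgebra of $\so(\w,\E)$, modulo $c_0$, with $C(K)\circ q$, and dualising gives a continuous surjection $\hbar(\w,\E)\to K$ through which $q$ factors. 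The hard part will be showing that this surjection is injective, i.e.\ that every slowly oscillating function is constant on the fibres of $q$, equivalently that any $p,p'\in\w^*$ with $q(p)=q(p')$ are glued by the smallest closed equivalence relation generated by $\E$. Here I would exploit (b): the entourage $\Delta_\w\cup\{(x,v^{k(m)}(x)),(v^{k(m)}(x),x):x\in L_m,\ m\in\w\}$ belongs to $\E$ for every choice of $k\colon\w\to\w$, because the coherence error accumulated along the $v$'s from level $m$ is at most $2^{-m+1}$ regardless of how far one climbs; so $p$ and $p'$ can be transported to arbitrarily high levels inside the corona without altering any coordinate $f_\alpha\circ\psi$, and once there the $m$-fineness (a) supplies a single entourage of $\E$ matching the transported copies of $p$ and $p'$ level by level. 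Perfect normality enters precisely at this step in case (1): a slowly oscillating $g$ that were \emph{not} constant on a $q$-fibre would have to separate a point of $K$ from a closed set not witnessed by $(f_\alpha)$, contradicting its choice; this is unavailable for a general compact space of weight $\mathfrak c$, which is why (1) is restricted to perfectly normal $K$.

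For (2) and (3) I would rerun the construction with every within-level entourage replaced by a genuine equivalence relation: for each $m$ and $n$ fix a finite partition of $L_m$ into pieces of $\max_{\alpha<n}f_\alpha$-oscillation $<2^{-n}$, refining as $n$ grows, and take the corresponding equivalence relations as a base; so far this is plainly cellular. The difficulty is the vertical gluing: a cellular finitary structure cannot put infinitely many levels into one class, so the telescope's ``level direction'' survives and the naive corona is strictly larger than $K$. This is where the hypothesis is used. Under $\Adelta^\circ_\w=\mathfrak c$ there is a cellular finitary \emph{indiscrete} coarse structure on $\w$ of weight $\le\mathfrak c$, whose corona is a single point; choosing the levels $L_m$ and this indiscrete structure compatibly so that the two families of equivalence relations commute, one amalgamates it with the cellular ``$K$-structure'' so as to collapse the level direction to a point while leaving the $q$-fibres, hence the corona $K$, intact. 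Under $\w_1=\mathfrak c$ the bound $w(K)\le\w_1$ coincides with $\mathfrak c$, so the analogous argument works for \emph{every} compact $K$ of weight $\le\w_1$, and perfect normality is no longer needed because $\w_1=\mathfrak c$ already supplies enough room to control the fibres. I expect the main obstacle throughout to be exactly this last point: reconciling cellularity, which forbids large cells, with the need to collapse the auxiliary telescope direction without disturbing the part of the corona that carries $K$.
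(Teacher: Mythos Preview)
The paper does not contain a proof of this theorem: it is quoted from \cite{BP-H} and immediately followed by two open problems, with no argument given here. So there is no ``paper's own proof'' to compare your proposal against; any assessment has to be on the merits of your sketch alone.

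On those merits, your outline for (1) is in the right spirit---encoding $K$ via a separating family of $[0,1]$-valued functions, building a telescope of finite nets, and reading off the corona via Stone--Weierstrass---but the injectivity step is not yet a proof. You assert that for \emph{every} function $k\colon\w\to\w$ the entourage $\Delta_\w\cup\{(x,v^{k(m)}(x)):x\in L_m\}$ lies in $\E$; however your declared base for $\E$ consists of the slow-oscillation entourages together with \emph{finite} powers of $v$, and an unbounded $k$ is not obviously in the coarse structure they generate. Without this you cannot transport two ultrafilters with the same $q$-image to ``the same high level'' inside the corona, and the fibre-collapsing argument stalls. The appeal to perfect normality at the end of (1) is also too loose: you need to show that \emph{every} bounded slowly oscillating function factors through $q$ modulo $c_0$, not merely that the $f_\alpha\circ\psi$ do.

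For (2) and (3) there is a concrete error: you invoke a cellular finitary \emph{indiscrete} structure ``whose corona is a single point'', but the paper only asserts that indiscrete coarse spaces have \emph{finite} corona; it is \emph{inseparated} structures whose corona is a singleton. The hypothesis $\Adelta^\circ_\w=\mathfrak c$ does imply $\Asigma^\circ_\w=\mathfrak c$ (Theorem~\ref{t:cc}(3)), so an inseparated cellular finitary structure is available and your strategy can be repaired by using that instead. Even so, the ``amalgamation so that the two families of equivalence relations commute'' is the genuine crux and is left entirely unspecified: two cellular finitary structures need not have a common cellular finitary refinement with the desired corona, and arranging this compatibility is exactly where the work in \cite{BP-H} lies.
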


This theorem raises two open set-theoretic problems.

\begin{problem} Is each compact Hausdorff space of weight $\le\w_1$  homeomorphic to the Higson corona of a finitary coarse structure on $\w$?
\end{problem} 

\begin{problem} Is each compact metrizable space homeomorphic to the Higson corona of some cellular finitary coarse structure on $\w$?
\end{problem} 

 By Theorem 4 in \cite{Prot_bc} (see also Theorem 11 in \cite{DKU}), for any cellular  metrizable finitary coarse space $(X,\E)$ its Higson corona $\hbar(X,\E)$ is a zero-dimensional compact Hausdorff space. Under CH, we have a much more precise result.
 
\begin{theorem}[\cite{Prot11}] Under CH the Higson corona of any unbounded separable ultrametric space is homeomorphic to $\beta\w\setminus\w$.
\end{theorem}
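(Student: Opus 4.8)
The plan is to recognise the Higson corona as a \emph{Parovi\v cenko space} and then invoke Parovi\v cenko's theorem, which under CH characterises $\beta\w\setminus\w$ as the unique compact Hausdorff space $K$ of weight $\le\w_1$ such that: $K$ is zero-dimensional; $K$ has no isolated points; every nonempty $G_\delta$-subset of $K$ has nonempty interior; and any two disjoint open $F_\sigma$-subsets of $K$ have disjoint closures. Thus it suffices to verify these four properties (together with the weight bound) for $\hbar(X,\E_d)$, where $(X,d)$ is an unbounded separable ultrametric space; CH enters only in the final appeal to Parovi\v cenko's theorem.

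\emph{Reduction to a countable space.} Fix a countable topologically dense set $D\subseteq X$. Since $d$ is an ultrametric, $D$ is even coarsely dense (every point of $X$ lies in a $d$-ball of radius $1$ centred at a point of $D$), and a nearest-point map $X\to D$ is bornologous because $d(x,x')<R$ with $R\ge1$ forces $d(g(x),g(x'))\le\max\{d(g(x),x),d(x,x'),d(x',g(x'))\}<R$. Hence the inclusion $D\hookrightarrow X$ is a coarse equivalence and induces a homeomorphism $\hbar(D,\E_d{\restriction}D)\to\hbar(X,\E_d)$. As $D$ is countable, $|\so(D,\E_d{\restriction}D)|\le|\IR^D|=\mathfrak c$, so $h(D,\E_d{\restriction}D)$ and its subspace $\hbar(D,\E_d{\restriction}D)$ have weight $\le\mathfrak c=\w_1$. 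We may therefore assume $X=D$ is a countable unbounded ultrametric space; its coarse structure has a countable base $E_0\subseteq E_1\subseteq\cdots$ of cellular entourages (the equivalence relations $d(x,y)<n$), so the corona is zero-dimensional (by Theorem 4 of \cite{Prot_bc} or Theorem 11 of \cite{DKU}, whose proofs use only a base of cellular entourages), and it is nonempty because $D$ is $\E_d$-unbounded. For ``no isolated points'' note that every nonempty clopen subset of $\hbar(D,\E_d)$ equals $\overline{\delta(A)}\cap\hbar(D,\E_d)$ for some $\E_d$-unbounded coarsely clopen $A\subseteq D$ (one $A$ with $\chi_A\in\so(D,\E_d)$); fixing $x_0\in D$, split the unbounded set $\{d(x_0,a):a\in A\}$ into two unbounded pieces $V',V''$ and put $A'=\{a\in A:d(x_0,a)\in V'\}$, $A''=A\setminus A'$. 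By the isosceles property ($d(x_0,{\cdot})$ is constant on $E_R(x)$ when $R<d(x_0,x)$), $A',A''$ are coarsely clopen and unbounded, so $\overline{\delta(A)}\cap\hbar(D,\E_d)$ splits into two disjoint nonempty clopen pieces and is not a singleton.

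\emph{The two remaining Parovi\v cenko conditions.} Two features of ultrametrics do the work. First, if $B$ is a $d$-ball of radius $r$ and $R\le r$, then $E_R[B]=B$; hence, if $(B_k)$ is a sequence of pairwise far-apart $d$-balls with radii $r_k\to\infty$, then $A:=\bigcup_kB_k$ is coarsely clopen. Second, for slowly oscillating $\varphi\colon D\to\IR$ and $\e>0$, the sets $\{\varphi\ge\e\}$ and $\{\varphi\le-\e\}$ are $\E_d$-separated, since points lying within $R$ of a common point are within $R$ of each other, so a jump of size $\ge2\e$ of $\varphi$ over a ball $E_R({\cdot})$ occurs only on a bounded set. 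For the $G_\delta$-condition, given a nonempty $G_\delta$-set $G$, use zero-dimensionality to shrink it so that $G\supseteq\bigcap_kN_k$ with $N_k=\overline{\delta(A_k)}\cap\hbar(D,\E_d)$ clopen and nonempty and $A_k$ decreasing and coarsely clopen; a coarsely clopen unbounded set contains $d$-balls of every prescribed radius arbitrarily far out, so choose balls $B_k\subseteq A_k$ with $r_k\to\infty$, pairwise far apart. Then $A:=\bigcup_kB_k$ is coarsely clopen with $A\subseteq^*A_k$ for all $k$, whence $\emptyset\ne\overline{\delta(A)}\cap\hbar(D,\E_d)\subseteq\bigcap_kN_k\subseteq G$ is a nonempty clopen subset of $G$. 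For the $F_\sigma$-condition, given disjoint open $F_\sigma$-sets $U,V$, write (by zero-dimensionality) $U=\bigcup_nC_n$, $V=\bigcup_nC_n'$ with $C_n=\overline{\delta(A_n)}\cap\hbar(D,\E_d)$, $C_n'=\overline{\delta(A_n')}\cap\hbar(D,\E_d)$ clopen and increasing, $A_n,A_n'$ coarsely clopen and increasing; disjointness of $C_n$ and $C_m'$ makes $A_n$ and $A_m'$ $\E_d$-separated for all $n,m$. Using the cellular entourages $E_n$, build by a level-by-level bookkeeping a coarsely clopen $A\subseteq D$ with $A_n\subseteq^*A$ and $A$ $\E_d$-separated from every $A_m'$; then $C:=\overline{\delta(A)}\cap\hbar(D,\E_d)$ is clopen, $U\subseteq C$ and $V\cap C=\emptyset$, so $\overline U\subseteq C$ and $\overline V\subseteq\hbar(D,\E_d)\setminus C$ are disjoint. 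All Parovi\v cenko conditions now hold, and under CH the theorem follows.

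\emph{The main obstacle.} The delicate part is the last paragraph: one must faithfully translate a nonempty $G_\delta$-set, and a pair of disjoint open $F_\sigma$-sets, of the corona back into honest subsets of $D$ that are controlled by the coarse structure, and then carry out the level-by-level construction of the coarsely clopen separating set $A$. It is exactly here that one needs \emph{both} asymptotic zero-dimensionality (a base of cellular entourages) \emph{and} metrizability (the countable cofinal chain $E_0\subseteq E_1\subseteq\cdots$); neither alone suffices, as shown by the circle (a metrizable Higson corona that fails to be an $F$-space) and by the Cantor set (zero-dimensional but not an $F$-space).
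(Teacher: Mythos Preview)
The paper does not prove this theorem; it merely quotes it from \cite{Prot11}. So there is no in-paper proof to compare against, only the original source. Your strategy---verify the Parovi\v cenko conditions and invoke the CH characterisation of $\omega^*$---is exactly the approach of \cite{Prot11}, so in spirit you are on the same track.

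That said, several steps in your write-up are too thin to count as a proof. First, in the reduction you speak of a ``nearest-point map'' $X\to D$; in a general ultrametric space nearest points need not exist, so you should instead choose for each $x$ some $g(x)\in D$ with $d(x,g(x))<1$ and check bornologousness for that. Second, you repeatedly use that clopen subsets of $\hbar(D,\E_d)$ correspond to coarsely clopen (i.e.\ slowly-oscillating-characteristic-function) subsets of $D$; this is true, but it is the crucial dictionary and deserves an explicit statement and one-line justification rather than a parenthetical. Third---and this is the real gap---the $F_\sigma$-separation clause is dismissed with ``build by a level-by-level bookkeeping a coarsely clopen $A\subseteq D$ with $A_n\subseteq^*A$ and $A$ $\E_d$-separated from every $A_m'$''. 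This is the heart of the argument (it is where ultrametricity and the countable cellular base genuinely work together), and as written it is an assertion, not a construction. You need to specify, for each $E_n$-class, which side it goes to and why the result is simultaneously $\subseteq^*$-above every $A_n$ and $\E_d$-separated from every $A_m'$; the interpolation is not automatic.

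Finally, your closing ``main obstacle'' paragraph is commentary rather than proof, and the circle/Cantor-set remarks are tangential: those spaces are not presented as Higson coronas of anything relevant, so they do not illustrate failure of your argument. Drop that paragraph and spend the space on the $F_\sigma$-separation construction instead.
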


On the other hand, under $\mathfrak u<\mathfrak d$ we have an opposite result.

\begin{theorem}[\cite{BCZ}]  Let $G$ be any countable locally finite group. Under $\mathfrak u<\mathfrak d$, for a metrizable finitary coarse space $(X,\E)$ the following conditions are equivalent: 
\begin{enumerate}
\item $(X,\E)$ is coarsely equivalent to the coarse space $(G,\E_G)$;
\item the Higson coronas $\hbar(X,\E)$ and $\hbar(G,\E_G)$ are homeomorphic;
\item $(X,\E)$ is cellular and the character of each point in $\hbar(X,\E)$ is $\ge\mathfrak d$.
\end{enumerate}
\end{theorem}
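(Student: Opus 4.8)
The plan is to establish the cycle of implications $(1)\Rightarrow(2)\Rightarrow(3)\Rightarrow(1)$, with essentially all of the difficulty concentrated in the last one. The implication $(1)\Rightarrow(2)$ is immediate from the fact recalled just before the theorem: a coarse equivalence $(X,\E)\to(G,\E_G)$ induces a homeomorphism $\hbar(X,\E)\to\hbar(G,\E_G)$.

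For $(2)\Rightarrow(3)$, assume $\hbar(X,\E)$ and $\hbar(G,\E_G)$ are homeomorphic. Since $G$ is locally finite, its finite subgroups $\{1_G\}=F_0\subseteq F_1\subseteq\cdots$ generate a base of $\E_G$ by the cellular finitary entourages $E_n=\{(x,y):y\in F_nx\}$, so $(G,\E_G)$ is cellular, metrizable and finitary; by \cite{Prot_bc} (see also \cite{DKU}) its Higson corona is therefore zero-dimensional, and hence so is $\hbar(X,\E)$. Conversely, a metrizable finitary coarse space whose Higson corona is zero-dimensional is cellular: given $E\in\E$, a fine enough finite clopen partition of the corona lifts, through slowly oscillating $\{0,1\}$-valued functions, to a partition of $X$ into $\E$-bounded pieces whose associated equivalence relation contains $E$ and lies in $\E$, and running this over a countable base of $\E$ produces a base of cellular entourages. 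Thus $(X,\E)$ is cellular. It remains to check that every point of $\hbar(X,\E)$ has character $\ge\mathfrak d$, and as character is a topological invariant it suffices to prove this for $\hbar(G,\E_G)$. There a point is an ultrafilter on the Boolean algebra of slowly oscillating subsets of $G$ modulo finite, and I would argue in ZFC that a base of size $<\mathfrak d$ for such an ultrafilter produces fewer than $\mathfrak d$ functions in $\w^\w$ dominating a family which, because the $E_n$-classes of $(G,\E_G)$ grow like $|F_n|$, cannot be dominated by $<\mathfrak d$ functions --- contradicting the definition of $\mathfrak d$.

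The implication $(3)\Rightarrow(1)$ is the heart of the theorem and the only place $\mathfrak u<\mathfrak d$ is used. Since $\E$ has a countable base of finitary entourages, $X$ is countable; fix a cofinal chain $\Delta_X=E_0\subsetneq E_1\subsetneq\cdots$ of cellular finitary entourages generating $\E$ (cellularity by (3), metrizability and finitariness by hypothesis). Being a base, this chain satisfies $\bigcup_{n}E_n=X\times X$, so the partitions $\mathcal P_n=X/E_n$ grow coarser with $n$ and eventually merge any two points of $X$, organizing $X$ into a tree $T$ whose level-$n$ nodes are the finite $E_n$-classes, an $E_n$-class being joined to the $E_{n+1}$-class containing it. The first step is to translate the hypothesis ``every point of $\hbar(X,\E)$ has character $\ge\mathfrak d$'' into a growth condition on $T$: it should be equivalent to saying that no branch of $T$, and more generally no thin coarse subspace of $(X,\E)$, can be dominated --- that is, for every branch $(b_n)_{n\in\w}$ of $T$ and every $h\in\w^\w$ there are infinitely many $n$ with $|b_n|\ge h(n)$. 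The second step uses $\mathfrak u<\mathfrak d$: a base of a free ultrafilter on $\w$ has size $\mathfrak u<\mathfrak d$ and hence cannot encode a dominating family, which permits a recursive construction --- diagonalizing simultaneously against all obstructions of size $<\mathfrak d$ --- of a coarse equivalence of $(X,\E)$ with the coarse space $(H,\E_H)$ of the countable locally finite group $H=\bigoplus_{\w}\mathbb Z/2$. By \cite{BZ}, $(H,\E_H)$ is coarsely equivalent to $(G,\E_G)$, and hence so is $(X,\E)$.

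The main obstacle is $(3)\Rightarrow(1)$, and within it the two steps above: making precise the equivalence between ``all corona points have character $\ge\mathfrak d$'' and the combinatorial growth condition on $T$, and then using $\mathfrak u<\mathfrak d$ to run the back-and-forth uniformly along all branches of $T$ while keeping the resulting coarse equivalence bornologous in both directions. That a hypothesis beyond ZFC is unavoidable is clear from \cite{Prot11}: under CH the Higson corona of every unbounded separable ultrametric space is $\beta\w\setminus\w$, so under CH conditions (2) and (3) hold for every unbounded cellular metrizable finitary coarse space on $\w$ --- for instance for $\w$ with the ultrametric $d(n,m)=\max\{n,m\}$ for $n\ne m$ (and $d(n,n)=0$), whose Higson corona is $\beta\w\setminus\w$ in ZFC --- whereas (1) fails for all the ``thin'' ones; so $\mathfrak u<\mathfrak d$ cannot be dropped.
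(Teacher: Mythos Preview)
The paper does not contain a proof of this theorem: it is quoted from \cite{BCZ} and stated without argument, so there is no proof in the present paper to compare your proposal against.

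That said, your outline follows the natural cycle $(1)\Rightarrow(2)\Rightarrow(3)\Rightarrow(1)$ and correctly locates all the work in $(3)\Rightarrow(1)$, where $\mathfrak u<\mathfrak d$ enters; your CH example at the end is a correct witness that the hypothesis cannot be dropped. But what you have written is a plan, not a proof. In $(2)\Rightarrow(3)$ the passage from zero-dimensional corona to cellularity is only gestured at (the lifting of clopen partitions to slowly oscillating $\{0,1\}$-functions needs care to yield a \emph{base} of cellular entourages), and the character bound $\ge\mathfrak d$ for points of $\hbar(G,\E_G)$ is asserted with a one-line heuristic rather than proved. In $(3)\Rightarrow(1)$ you state the two key steps---translating the character hypothesis into a growth condition on the tree of $E_n$-classes, and using an ultrafilter base of size $\mathfrak u<\mathfrak d$ to run a uniform back-and-forth---but carry out neither; in particular the phrase ``diagonalizing simultaneously against all obstructions of size $<\mathfrak d$'' hides the entire construction. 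To judge whether your route matches the original you would have to consult \cite{BCZ} directly.
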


We recall that the {\em character} of a point of a topological space is the smallest cardinality of a neighborhood base at the point.

By \cite{SS} (see also \cite{Vel}, \cite{FMcK}), under Proper Forcing Axiom (briefly, PFA), each homeomorphism of the Stone-\v Cech remainder $\beta\w\setminus\w$ is induced by a bijection between  subsets with finite complement in $\w$. This result can be reformulated as follows.

\begin{theorem} Let $\E$ be the smallest coarse structure on the set $X=\w$. Under PFA, each homeomorphism of the corona $\hbar(X,\E)$ is induced by some coarse equivalence of $(X,\E)$.
\end{theorem}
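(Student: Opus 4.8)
The plan is to identify the Higson corona $\hbar(X,\E)$ of the smallest coarse structure $\E$ on $X=\w$ with the Stone--\v Cech remainder $\w^*=\beta\w\setminus\w$, to translate the notion of a homeomorphism \emph{induced by a coarse equivalence} into the notion of a \emph{trivial} autohomeomorphism of $\w^*$, and then to quote the theorem of \cite{SS} already recalled before the statement: under PFA every autohomeomorphism of $\w^*$ is trivial. So the real content here is a dictionary, not a new argument.

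First I would compute $\so(X,\E)$. Since $\E$ consists of all trivial entourages on $\w$, for each $E\in\E$ the set $\{x\in\w:E(x)\ne\{x\}\}$ is finite; it is $\E$-bounded, and on its complement every $f:\w\to\IR$ has $\mathrm{diam}\,f[E(x)]=0$. Hence every bounded function on $\w$ is slowly oscillating, so $\so(X,\E)=\ell^\infty(\w)$, the algebra of all bounded functions on the (discrete) set $\w$. Consequently the Higson compactification $h(X,\E)$ is the Stone--\v Cech compactification $\beta\w$, and since every $\E$-bounded subset of $\w$ is finite, $\hbar(X,\E)=h(X,\E)\setminus\delta(\w)=\beta\w\setminus\w=\w^*$.

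Next I would do the functorial bookkeeping in both directions. Every bijection $g:\w\to\w$ is an asymorphism of $(\w,\E)$, because for a trivial entourage $E$ the set $(g\times g)(E)$ again differs from $\Delta_\w$ in a finite set, and the same holds for $g^{-1}$; taking $g^{-1}$ as quasi-inverse shows $g$ is a coarse equivalence, and the induced homeomorphism $\hbar g$ of $\hbar(X,\E)=\w^*$ is precisely the restriction $\beta g{\restriction}\w^*$ of the Stone extension. Conversely, if $f:\w\setminus a\to\w\setminus b$ is a bijection with $a,b\in[\w]^{<\w}$, then any bijection $g$ of $\w$ extending $f$ satisfies $\beta g{\restriction}\w^*=\beta f{\restriction}\w^*$, since $g$ and $f$ agree on the cofinite set $\w\setminus a$, whose closure in $\beta\w$ contains all of $\w^*$. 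Thus the trivial autohomeomorphisms of $\w^*$ are exactly the maps $\hbar g$ coming from bijections $g$ of $\w$, and each such $g$ is a coarse equivalence of $(X,\E)$.

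Finally, given an arbitrary homeomorphism $h$ of $\hbar(X,\E)$, identify it with an autohomeomorphism of $\w^*$; under PFA the theorem \cite{SS} provides an almost-permutation $f:\w\setminus a\to\w\setminus b$ of $\w$ with $h=\beta f{\restriction}\w^*$, and extending $f$ to a bijection $g$ of $\w$ yields a coarse equivalence of $(X,\E)$ with $\hbar g=h$, as required. The only substantive ingredient is the Shelah--Stepr\=ans rigidity theorem, which is cited; I do not expect any genuine obstacle beyond being careful that for the smallest coarse structure ``bounded'' means ``finite'' and that the Higson construction applied to $(\w,\E)$ faithfully reproduces $\beta\w$ and $\w^*$ together with the $\beta$-extensions of bijections.
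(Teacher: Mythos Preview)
Your proposal is correct and matches the paper's approach exactly: the paper offers no proof, presenting the theorem simply as a reformulation of the Shelah--Stepr\=ans result \cite{SS}, and you have supplied precisely the dictionary the paper leaves implicit. One cosmetic point: a bijection $g:\w\to\w$ extending $f:\w\setminus a\to\w\setminus b$ exists only when $|a|=|b|$, but this is harmless---either shrink the domain of $f$ to equalize them, or observe that any extension of $f$ to all of $\w$ (bijective or not) is already a coarse equivalence of $(\w,\E)$ inducing the same map on $\w^*$.
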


This theorem suggests the following problem.

\begin{problem}\label{pr:PFA} Assume PFA. Let $X,Y$ be two metrizable (cellular) finitary coarse spaces.
\begin{enumerate}
\item Are the coarse spaces $X,Y$ coarsely equivalent if their Higson coronas are homeomorphic? 
\item Is any homeomorphism between the Higson coronas of $X,Y$ induced by a coarse equivalence between $X$ and $Y$?
\end{enumerate}
\end{problem} 

\begin{remark} Recent results of Braga, Farah, Vignati \cite{BFV} suggest that the answer to Problem~\ref{pr:PFA} can be affirmative for metrizable finitary coarse spaces   $X,Y$ of finite asymptotic dimension (more generally, for  metrizable finitary coarse spaces having Yu's property $A$). On the other hand, by an old result of Rudin \cite{Rudin}, under CH the Stone-\v Cech remainder has $2^{\mathfrak c}$ homeomorphisms, which implies that under CH the answer to Problem~\ref{pr:PFA}(2) is negative even for discrete coarse spaces.
\end{remark}


\section{Special ultrafilters on coarse spaces}\label{s:ultra}

First we recall the definitions of some special types of free ultrafilters on $\w$. 

A free ultrafilter $\U$ on $\w$ is called
\begin{itemize}
\item a {\em $P$-point} if for any sequence of sets $\{U_n\}_{n\in\w}\subseteq\U$ there exist a set $U\in\U$ such that $U\subseteq^* U_n$ for every $n\in\w$;
\item a {\em $Q$-point} if for any locally finite cellular entourage $E$ on $\w$ there exists a set $U\in\U$ such that $|U\cap E(x)|\le 1$ for any $x\in\w$;
\item {\em Ramsey} if for any map $f:\w\to\w$ there exists $U\in\mathcal U$ such that $f{\restriction}U$ is either constant or injective;
\item a {\em weak $P$-point}, if for any sequence $(\U_n)_{n\in\w}$ of free ultrafilters on $\w$ that are not equal to $\U$ there exists a set $U\in\U\setminus\bigcup_{n\in\w}\U_n$;
\item an {\em $OK_\kappa$-point} for a cardinal $\kappa$ if for any sequence of sets $\{U_n\}_{n\in\w}\subseteq \U$ there exists a family $(V_\alpha)_{\alpha\in\kappa}\subseteq\U$ such that for any ordinals $\alpha_1<\dots<\alpha_n$ in $\kappa$ we have $\bigcap_{i=1}^nV_{\alpha_i}\subseteq^*U_n$;
\item {\em rapid} if for any function $f\in\w^\w$ there exists a function $g\in \w^\w$ such that $f\le g$ and $\{g(n):n\in\w\}\in\U$;
\item {\em discrete} if for any injective function $\varphi:\w\to \IR$ there exists a set $U\in\U$ whose image $\varphi(U)$ is a  discrete subspace of $\IR$;
\item {\em nowhere dense} if for any injective function $\varphi:\w\to \IR$ there exists a set $U\in\U$ whose image $\varphi(U)$ is nowhere dense in $\IR$;
\item a ({\em cellular}) {\em $T$-point} if for any increasing sequence $(E_n)_{n\in\w}$ of finitary (and cellular) entourages on $\w$ there exists a set $U\in\U$ such that for every $n\in\w$ the set $\{x\in U:\{x\}\ne U\cap E_n(x)\}$ is finite;
\item {\em dynamically discrete} if for any action of a countable group $G$ on $\w$ the subspace $\{g\,\U\}_{g\in G}$ is discrete in $\beta\w$.
\end{itemize}
It is known that a free ultrafilter on $\w$ is Ramsey if and only if it is both a $P$-point and a $Q$-point. More information on $Q$-points, $P$-points, weak $P$-points, $OK_\kappa$-points can be found in the survey \cite{vM}; discrete and nowhere dense ultrafilters were considered in \cite{BB} and \cite{Shelah}; (cellular) $T$-points were introduced and studied by Petrenko and Protasov in \cite{b3}, \cite{b4}.

The following diagram describes the implications between various properties of free ultrafilters on $\w$.
$$
\xymatrix{
\mbox{rapid}&\mbox{$Q$-point}\ar@{=>}[l]\ar@{=>}[r]&\mbox{$T$-point}\ar@{=>}[r]&\mbox{cellular $T$-point}\\
&\mbox{Ramsey}\ar@{=>}[r]\ar@{=>}[u]&\mbox{$P$-point}\ar@{=>}[r]\ar@{=>}[u]\ar@{=>}[d]&\mbox{discrete}\ar@{=>}[r]&\mbox{nowhere}\atop\mbox{dense}\\
\mbox{$OK_{\mathfrak c}$-point}\ar@{=>}[r]&\mbox{$OK_{\w_1}$-point}\ar@{=>}[r]&\mbox{weak $P$-point}\ar@{=>}[r]&\mbox{dynamically}\atop\mbox{discrete}
}
$$

According to a famous result of Kunen \cite{Kunen76}, \cite{Kunen} (see also \cite[4.5.2]{vM}), $OK_{\mathfrak c}$-points exist in ZFC. On the other hand, there are models of ZFC containing no nowhere dense ultrafilters \cite{Shelah} and there are models of ZFC containing no rapid ultrafilters \cite{Miller}. A well-known open problem \cite[p.563]{vM} asks whether there exists a model of ZFC containing no $P$-points and no $Q$-points.

This information motivates the following open problems.

\begin{problem}[\cite{b3}] Do $T$-points exist in ZFC?
\end{problem}

\begin{problem}[\cite{b3}] Is each weak $P$-point (or $OK_{\mathfrak c}$-point) a T-point?
\end{problem}

\begin{problem}[\cite{b4}] Is each discrete ultrafilter a $T$-point?
\end{problem}

\begin{problem}[\cite{b4}] Is each cellular $T$-point a $T$-point?
\end{problem}

\begin{remark} By \cite[Proposition 4]{b3}, under CH there exists a $T$-point     
which is neither a weak $P$-point, nor a $Q$-point  nor a nowhere dense ultrafilter.
Jana Fla\v skova noticed that a rapid ultrafilter needs not be a $T$-point, see \cite[p.350]{b4}.
\end{remark}

It is easy to see that weak $P$-points are dynamically discrete ultrafilters. The converse is not true because of the following result. Two ultrafilters $\U,\V$ on $\w$ are called {\em isomorphic} if there exists a bijection $f$ of $\w$ such that $\U=\{f(V):V\in\V\}$. By \cite[4.5.2]{vM}, there exists $2^{\mathfrak c}$ pairwise non-isomorphic weak $P$-points.

\begin{theorem} If $(\U_n)_{n\in\w}$ is a sequence of pairwise non-isomorphic weak $P$-points, then each ultrafilter $\W$ in the closure $\overline{\{\U_n:n\in\w\}}\subseteq \beta\w\setminus\w$ is dynamically discrete.
\end{theorem}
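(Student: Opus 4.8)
The plan is to fix an arbitrary countable group $G$ acting on $\w$ and to prove that the orbit $\{g\W:g\in G\}$ of $\W$ under the induced action on $\beta\w$ is a discrete subspace; since $G$ is arbitrary, this is exactly dynamical discreteness of $\W$. Write $O_n:=\{g\U_n:g\in G\}$ and $E:=\bigcup_{n\in\w}O_n$. As $\W$ lies in the closure of the countable set $\{\U_n:n\in\w\}$, there is an ultrafilter $q\in\beta\w$ with $\W=\lim_{n\to q}\U_n$, and then $g\W=\lim_{n\to q}g\U_n$ for every $g\in G$, because the $q$-limit commutes with the homeomorphism of $\beta\w$ induced by $g$.

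The first and main step is to show that $E$ is a discrete subspace of $\w^*$. Three observations combine here. (a) The orbits $O_n$ are pairwise disjoint: $g\U_n=g'\U_m$ would make $\U_n$ and $\U_m$ isomorphic, forcing $n=m$; this is the only place where pairwise non-isomorphism of the $\U_n$ is used, and it is crucial. (b) Each $O_n$ is discrete, because $\U_n$ is a weak $P$-point and hence dynamically discrete. (c) For every $n\in\w$ and $h\in G$ one has $h\U_n\notin\overline{\bigcup_{m\ne n}O_m}$: indeed $\bigcup_{m\ne n}O_m$ is a countable family of free ultrafilters none of which equals $\U_n$ (by (a)), so $\U_n\notin\overline{\bigcup_{m\ne n}O_m}$ since $\U_n$ is a weak $P$-point; and the homeomorphism of $\beta\w$ induced by $h^{-1}$ sends $h\U_n$ to $\U_n$ while mapping $\bigcup_{m\ne n}O_m$ onto itself, so it keeps $h\U_n$ out of the closure too. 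From (b) and (c), every point of $E$ is isolated in $E$.

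For the second step, note that $E$ is countably infinite and discrete, so a routine disjointification (choose $A_k\in e_k$ with $A_k\notin e_j$ for $j\ne k$, then replace $A_k$ by $A_k\setminus\bigcup_{j<k}A_j$) yields pairwise disjoint sets $B_e\in e$, $e\in E$. Put $A:=\bigcup_{n\in\w}B_{\U_n}\subseteq\w$. The key computation is that for $h\in G$ and $n\in\w$ one has $A\in h\U_n$ if and only if $h\U_n\in\{\U_j:j\in\w\}$, which by non-isomorphism happens if and only if $h\U_n=\U_n$. Consequently $A\in\U_n$ for all $n$, hence $A\in\W$; and for each $h\in G$, $A\in h\W$ iff $H_h:=\{n:h\U_n=\U_n\}\in q$, in which case the sequences $(h\U_n)_n$ and $(\U_n)_n$ agree $q$-almost everywhere, whence $h\W=\lim_{n\to q}h\U_n=\lim_{n\to q}\U_n=\W$. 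Thus the clopen set $\{p\in\beta\w:A\in p\}$ is a neighbourhood of $\W$ meeting $\{g\W:g\in G\}$ only in $\W$, so $\W$ is isolated in $\{g\W:g\in G\}$. Translating by the homeomorphism induced by $g_0^{-1}$ (which permutes $\{g\W:g\in G\}$) shows that every point of this orbit is isolated, so the orbit is discrete, as required.

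The routine parts are the disjointification and the computation of $\{n:A\in h\U_n\}$. The main obstacle is step (c): one has to see that a single application of the weak-$P$-point property of $\U_n$ to the countable set $\bigcup_{m\ne n}O_m$ simultaneously controls all translates $h\U_n$, which works precisely because that set is invariant under the homeomorphism induced by $h^{-1}$. (The easy implication ``weak $P$-point $\Rightarrow$ dynamically discrete'' used in (b) is the same trick applied to a single orbit.)
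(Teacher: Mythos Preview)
Your proof is correct and follows essentially the same strategy as the paper's: use the weak-$P$-point hypothesis to produce, for each $n$, a set in $\U_n$ avoiding all the other (translated) $\U_m$'s, and assemble these into a single set $A\in\W$ that isolates $\W$ in its $G$-orbit. Your presentation separates this into two cleaner steps (first show that $E=\bigcup_n O_n$ is discrete, then disjointify abstractly), whereas the paper interleaves the two in a single inductive construction of the sets $U_n\in\U_n$, but the substance is the same.
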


\begin{proof} Since $\W\in\overline{\{\U_n:n\in\w\}}$, there exists an ultrafilter $\V$ on $\w$ such that the family $$\textstyle\big\{\bigcup_{n\in V}U_n:V\in\V\;\wedge\;(U_n)_{n\in V}\in\prod_{n\in V}\U_n\big\}$$is a base of the ultrafilter $\W$. If the ultrafilter $\V$ is principal, then $\W=\U_n$ for some $n\in\w$ and $\W=\U_n$ is dynamically discrete, being a weak $P$-point. So, we assume that $\V$ is a free ultrafilter.  Let $G$ be any countable subgroup of the permutation group $S_\w$ of $\w$. Write $G$ as the union $G=\bigcup_{n\in\w}G_n$ of a sequence $(G_n)_{n\in\w}$ of finite sets such that $G_n^{-1}=G_n\subseteq G_{n+1}$ for every $n\in\w$.

For every $n\in\w$  the weak $P$-point $\U_n$ does not belong to the closure of the countable set $\{g\U_i:g\in G,\;i\in\w,\;\U_n\ne g\U_i\}$. Consequently, there exists a set $U_n\in\U_n$ such that $U_n\notin g\U_i$ for every $g\in G$ and $i\in\omega$ with $\U_n\ne g\U_i$. It follows that for every $n\in\w$ the set $$U'_n=U_n\setminus\bigcup\{gU_i:g\in G_n,\;i\le n,\; \U_n\ne g\U_i\}$$belongs to the ultrafilter $\U_n$.

To see $\W$ is dynamically discrete, it suffices to show that the ultrafilter $\W$ is an isolated point of its orbit $\{g\W\}_{g\in G}$. This will follow as soon as we check that for every $g\in G$ with $g\W\ne\W$, the set $W=\bigcup_{n\in\w}U'_n\in\W$ does not belong to the ultrafilter $g\W$. Find a number $m\in\w$ such that $g\in G_m$. Since $\W\ne g\W$, we can chose a set $O_g\in\W$ such that $O_g\cap gO_g=\emptyset$. The choice of $\V$ ensures that the set $V_g=\{n\in\w:O_g\in \U_n,\;n\ge m\}$  belongs to the ultrafilter $\V$. We claim that the set $W_g:=\bigcup_{n\in V_g}gU'_n\in g\W$ is disjoint with the set $W$. Assuming that $W\cap W_g\ne\emptyset$, we could find two numbers $i\in\w$ and $j\in V_g$ such that $U'_i\cap gU'_j\ne\emptyset$.
If $i=j$, then the inclusion $i\in V_g$ implies $\U_i\ne g\U_i$ and then the definition of the set $U'_i$ ensures that $U'_i\cap gU'_i=\emptyset$, which contradicts the choice of the numbers $j,i$. Therefore, $i\ne j$.

The non-isomorphness of the ultrafilters $\U_i,\U_j$ guarantees that $g\U_j\ne \U_i\ne g^{-1}\U_j$. If $j<i$, then the definition of the set $U'_i$ ensures that $U'_i\cap gU'_j=\emptyset$. If $i<j$, then the definition of the set $U'_j$ ensures that  $U'_j\cap g^{-1}U'_i=\emptyset$. In both cases we obtain $U'_i\cap gU'_j=\emptyset$, which contradicts the choice of the numbers $i,j$.
This completes the proof of dynamical discreteness of $\W$.
\end{proof}

\section{Coarse structures and topological dynamics}\label{s:d}

By Theorem~\ref{t:G}, every finitary coarse structure on a set $X$ is equal to the finitary coarse structure $\E_G$ induced by the action of suitable subgroup $G\subseteq S_X$ of the permutation group of $X$. The action of the group $G$ extends to a continuous action of $G$ on the Stone-\v Cech compactification $\beta X$ of the discrete space $X$. The  remainder $X^*=\beta X\setminus X$ is an invariant subset of the action, so we obtain a dynamical system $(X^*,G)$ and can study the interplay between properties of the coarse structure $\E_G$ and the properties of the dynamical system $(X^*,G)$, see the papers \cite{b7}, \cite{P-Dynamics}, \cite{PS15} for more information on this topic.

The following theorem (proved in \cite[Theorem 15]{b16})  shows that the orbit structure of the dynamical system $(\w^*,G)$ uniquely determines the coarse structure $\E_G$.

\begin{theorem} For two subgroups $G,H\subseteq S_\w$ the coarse structures $\E_G$ and $\E_H$ on $\w$ coincide if and only if $\{Gp:p\in\w^*\}=\{Hp:p\in\w^*\}$.
\end{theorem}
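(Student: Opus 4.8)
The plan is to prove both implications separately. The forward implication is nearly trivial: if $\E_G=\E_H$, then the two coarse structures induce the same notion of "closeness" of points, and in a finitary coarse structure the orbit $Gp$ of an ultrafilter $p\in\w^*$ can be recovered from $\E_G$ alone. Concretely, one should first observe that for a point $x\in\w$ the orbit $Gx$ coincides with the $\E_G$-bounded component — no, more precisely, that $y\in Gx$ iff $(x,y)$ lies in some entourage of $\E_G$; extending this to $\w^*$, one has $q\in Gp$ iff for some $E\in\E_G$ the "swelling" $\{q': (p,q')\in\bar E\}$ contains $q$, where $\bar E$ is the closure of $E$ in $\beta\w\times\beta\w$. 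Since $\E_G=\E_H$ gives the same entourages, we get $Gp=Hp$ for every $p\in\w^*$, hence equality of the orbit partitions.

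For the converse, assume $\{Gp:p\in\w^*\}=\{Hp:p\in\w^*\}$; I want to show $\E_G\subseteq\E_H$ (the reverse inclusion is symmetric). Since $\E_H$ has a base of entourages of the form $E_F:=\{(x,y):y\in Fx\}$ for finite symmetric $F\ni 1_H$ with $L\times L$ added, and since every finitary entourage of $\E_G$ is contained in $E_{F}$ for a finite symmetric $F\subseteq G$ containing $1_G$, it suffices to show: for each finite symmetric $F\subseteq G$ with $1_G\in F$ there is a finite symmetric $F'\subseteq H$ with $1_H\in F'$ such that $\{(x,y):y\in Fx\}\subseteq\{(x,y):y\in F'x\}$ modulo a finite set, i.e. $Fx\subseteq F'x$ for all but finitely many $x\in\w$. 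Suppose not: then for some finite $F\subseteq G$ there is, for every finite $F'\subseteq H$, an infinite set $A_{F'}\subseteq\w$ and a choice $g=g_{F'}\in F$ with $g x\notin F'x$ for all $x\in A_{F'}$. By passing to a single $g\in F$ that works for cofinally many $F'$ and a diagonal/ultrafilter argument, extract an ultrafilter $p\in\w^*$ such that $gp\notin Hp$: indeed choose $p$ concentrated so that for every finite $F'\subseteq H$ the set $\{x: gx\notin F'x\}\in p$; then $gp\ne hp$ for every $h\in H$, so $gp\notin Hp$. But $gp\in Gp$, and $Gp$ is one of the common orbits, hence equals $Hp$ — contradiction.

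The main obstacle is the extraction of the ultrafilter $p$ in the converse direction: one must arrange simultaneously, for a \emph{fixed} $g\in F$, that $\{x\in\w: gx\notin F'x\}\in p$ for \emph{all} finite $F'\subseteq H$. This requires that the family $\{\{x: gx\notin F'x\}: F'\subseteq H\text{ finite}\}$ has the finite intersection property, which follows because $H$ is a group: if $gx\in F'_1x$ and $gx\in F'_2x$ then $gx\in (F'_1\cap F'_2)x$, so the family is in fact downward directed, and each member is infinite by the failure of $Fx\subseteq F'x$ eventually (after fixing $g$ via pigeonhole on the finitely many elements of $F$). One then also needs to check that $gp\notin Hp$, i.e. $gp\ne hp$ for each $h\in H$: given $h$, put $F'=\{h,h^{-1},1_H\}$; since $\{x: gx\notin F'x\}\in p$, the sets $gx$ and $hx$ disagree on a set in $p$, whence $gp\ne hp$ because the continuous extensions of the bijections $x\mapsto gx$ and $x\mapsto hx$ to $\beta\w$ take different values at $p$. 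I would also need the standard fact that the $\E_G$-boundedness of a set coincides with being contained in a single $G$-orbit of a finite set, used to reduce to the base entourages $E_F$; this is routine from the definition of $\E_G$. Finally, one should double-check the easy direction's claim that closeness of ultrafilters is detected orbitwise, which is exactly the content of "$q\in Gp$ iff $q\in\bar E(p)$ for some $E\in\E_G$" — a direct computation with the definition of the product topology on $\beta\w\times\beta\w$.
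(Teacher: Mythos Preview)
The paper does not prove this theorem; it merely cites \cite[Theorem~15]{b16}. So there is no in-paper proof to compare against, and I will simply assess your argument.

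Your overall strategy is correct and standard: the forward direction follows because for free ultrafilters $p,q$ one has $q\in Gp$ if and only if $(p,q)$ lies in the $\beta\w\times\beta\w$-closure of some $E\in\E_G$ (your parenthetical remark about points of $\w$ is false---every pair $(x,y)\in\w\times\w$ lies in some entourage by axiom~(A2)---but you correctly abandon it). For the converse, the reduction to a single $g\in F$ is right, and the pigeonhole justification you sketch (if each $g\in F$ were handled by some finite $F'_g\subseteq H$, take $F'=\bigcup_g F'_g$) is exactly what is needed.

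Two points require correction. First, your justification of downward directedness is garbled: the implication ``$gx\in F'_1x$ and $gx\in F'_2x$ imply $gx\in(F'_1\cap F'_2)x$'' is false in general and in any case points the wrong way. The correct observation is that
\[
\{x:gx\notin(F'_1\cup F'_2)x\}=\{x:gx\notin F'_1x\}\cap\{x:gx\notin F'_2x\},
\]
so one should take the \emph{union} $F'_1\cup F'_2$ to witness directedness. Second, the step ``$\{x:gx\ne hx\}\in p$ implies $gp\ne hp$'' is not automatic from continuity alone; it requires the classical $3$-colouring lemma that a fixed-point-free bijection of $\w$ extends to a fixed-point-free self-map of $\beta\w$ (apply it to $h^{-1}g$ restricted to the $p$-large set on which it moves every point). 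Once you invoke this, the argument is complete.
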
 

The following fact is proved in \cite[Theorem 3.15]{P-Dynamics}.

\begin{theorem} If for a group $G$ of permutations of a set $X$,  the finitary coarse space $(X,\E_G)$ is indiscrete,  then there exists an ultrafilter $p\in X^*$ whose orbit $Gp$ is not discrete.
\end{theorem}

Under $\mathfrak t=\mathfrak c$ we can prove a bit more.

\begin{theorem}\label{t:t=c} Let $G$ be a group of permutations of a countable set $X$ such that the coarse space $(X,\E_G)$ is indiscrete. Then there exists an ultrafilter $p\in X^*$ whose orbit $Gp$ contains at least $\mathfrak t$ pairwise disjoint nonempty open sets and hence $|Gp|\ge\mathfrak t$. If $\mathfrak t=\mathfrak c$, then $p$ is a $P$-point and the space $Gp$ is not discrete but every subspace $A\subseteq Gp$ of cardinality $|A|<\mathfrak c$ is discrete.
\end{theorem}

\begin{proof} Write the family of all subsets of $X$ as $\{X_\alpha\}_{\alpha\in\mathfrak c}$.

For every ordinal $\alpha\in\mathfrak t$ we shall inductively choose an infinite set $U_\alpha\subseteq X$ and an element $g_\alpha\in G$ such that the following conditions are satisfied:
\begin{itemize}
\item[(i)] $U_\alpha\cup g_\alpha U_\alpha\subseteq^* U_\beta$ for all $\beta\in\alpha$;
\item[(ii)] $U_\alpha\subseteq X_\alpha$ or $U_\alpha\subseteq X\setminus X_\alpha$;
\item[(iii)] $U_\alpha\cap g_\alpha U_\alpha=\emptyset$.
\end{itemize}
Assume that for some ordinal $\alpha\in \mathfrak t$ we have constructed a family of infinite sets $(U_\beta)_{\beta\in\alpha}$ satisfying the condition (i). Since $\alpha<\mathfrak t$, there exists an infinite set $W_\alpha\subseteq X$ such that $W_\alpha\subseteq^* U_\beta$ for all $\beta\in\alpha$. Since one of the sets $W_\alpha\cap X_\alpha$ or $W_\alpha\setminus X_\alpha$ is infinite, we can replace $W_\alpha$ by a smaller infinite set and additionally assume that $W_\alpha\subseteq X_\alpha$ or $W_\alpha\subseteq X\setminus X_\alpha$.

Since the coarse space $(\w,\E_G)$ is indiscrete, the subset $W_\alpha$ is not $\E_G$-discrete. Consequently, there exists an element $g_\alpha\in G$ such that the set $V_\alpha=\{x\in W_\alpha:x\ne g_\alpha x\in W_\alpha\}$ is infinite. Choose an infinite subset $U_\alpha\subseteq V_\alpha$ such that $U_\alpha\cap g_\alpha U_\alpha=\emptyset$. It is easy to see that the set $U_\alpha$ satisfies the conditions (i)--(iii).

After completing the inductive construction, take any ultrafilter $p\in X^*$ containing the family $\{U_\alpha\}_{\alpha\in\mathfrak t}$. The inductive conditions (i) and (iii) imply that for every ordinals $\beta<\alpha<\mathfrak t$ we have
$$g_\alpha U_\alpha\cap g_\beta U_\beta\subseteq^* U_\beta\cap (X\setminus U_\beta)=\emptyset,$$
which implies that the family $(Gp\cap \overline{g_\alpha U_\alpha})_{\alpha\in\mathfrak t}$ consists of pairwise disjoint nonempty open sets in $Gp\subseteq X^*\subset \beta X$.

Now assume that $\mathfrak t=\mathfrak c$. In this case the condition (ii) implies that $\{U_\alpha\}_{\alpha\in\mathfrak t}$ is a base of the ultrafilter $p$. The condition (i) implies that the ultrafilter $p$ is a $P$-point (even a $P_{<\mathfrak c}$-point). To see that the orbit $Gp$ is not discrete, take any neighborhood $O_p$ of $p$ in $\beta X$ and find $\alpha\in\mathfrak t$ such that $\overline U_{\!\alpha}\subseteq O_p$. The inductive condition (i) implies that $g_{\alpha+1}U_{\alpha+1}\subseteq^* U_\alpha$ and hence $g_{\alpha+1}p\in \overline U_{\alpha}\subseteq O_p$, witnessing that the orbit $Gp$ is not discrete.

It remains to prove that every subspace $A\subseteq Gp$ of cardinality $|A|<\mathfrak c$ is discrete. Given any point $gp\in A$, for every $a\in A\setminus\{gp\}$, find an ordinal $\alpha_a\in\mathfrak c$ such that $gU_{\alpha_a}\notin a$. By Proposition 6.4 in \cite{Blass}, the cardinal $\mathfrak t$ is regular. Consequently, there exists an ordinal $\alpha\in\mathfrak c=\mathfrak t$ such that $\sup_{a\in A}\alpha_a<\alpha$. The inductive condition (i) guarantees that $gU_\alpha\subseteq^* gU_{\alpha_a}$ for all $a\in A\setminus\{gp\}$. Then $g\overline U_{\!\alpha}$ is a neighborhood of $gp$ in $\beta X$, which is disjoint with the set $A\setminus\{gp\}$ and witnesses that the space $A$ is discrete.
\end{proof}

By a {\em dynamical system} we understand a compact Hausdorff space $K$ endowed with the continuous action of some group $G$. A dynamical system $(K,G)$ is called
\begin{itemize}
\item {\em minimal} if the orbit $Gx$ of any point $x\in K$ is dense in $K$;
\item {\em topologically transitive} if the orbit $GU$ of any nonempty open set $U\subseteq K$ is dense in $K$.
\end{itemize}

The following theorem proved in \cite{P-Dynamics}  characterizes large and inseparated coarse structures in dynamical terms.

\begin{theorem}[\cite{P-Dynamics}]\label{t:dyn1} For an infinite set $X$ and a subgroup $G\subseteq S_X$ of the permutation group, the coarse structure $\E_G$ is
\begin{enumerate}
\item large if and only if the dynamical system $(X^*,G)$ is minimal;
\item inseparated if and only if $(X^*,G)$ is topologically transitive.
\end{enumerate}
\end{theorem}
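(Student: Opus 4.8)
The plan is to prove both equivalences by unwinding the definitions of largeness and inseparatedness in terms of the finitary entourages $E_F$ associated to finite symmetric subsets $F\ni 1_G$ of $G$, and then translating statements about $E_F$-balls in $\w$ into statements about orbits $Gp$ in $\w^*=\beta\w\setminus\w$. The bridge is the standard correspondence: a set $A\subseteq X$ is $\E_G$-unbounded iff $\overline A\cap X^*\ne\emptyset$; and for a finite $F\subseteq G$ the closure of $E_F[A]=FA$ (the $F$-saturation of $A$) meets $X^*$ in $\bigcup_{g\in F}g\,(\overline A\cap X^*)$. So ``$E[A]=X$ for some $E\in\E_G$'' becomes ``$FA$ is cofinite for some finite $F\subseteq G$'', i.e. $G\,q\cap \overline A\ne\emptyset$ for every $q\in X^*$ — a density statement for the orbit closure.

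For part (1): First I would show that if $(X^*,G)$ is minimal, then $\E_G$ is large. Take an $\E_G$-unbounded set $A$; I want a finite $F\subseteq G$ with $FA$ cofinite. For each $q\in X^*$, minimality gives $g_q\in G$ with $g_q\,p_q=q$ for some $p_q\in\overline A\cap X^*$; equivalently $q\in\overline{g_q A}$, so $g_q A\in q$, hence $\overline{g_q A}$ is a neighbourhood of $q$. By compactness of $X^*$, finitely many such basic clopen sets $\overline{g_1A},\dots,\overline{g_nA}$ cover $X^*$, which forces $g_1A\cup\dots\cup g_nA$ to be cofinite in $X$; setting $F=\{1_G,g_1^{\pm1},\dots,g_n^{\pm1}\}$ (and adjusting by a bounded set) gives $E_F[A]=X$. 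Conversely, if $\E_G$ is large, I would show every orbit closure is all of $X^*$: given $p\in X^*$ and a nonempty clopen $\overline A\cap X^*$ of $X^*$ with $A$ infinite (hence $\E_G$-unbounded), largeness gives finite $F$ with $FA\supseteq^* X$, so $X^*=\bigcup_{g\in F}\overline{gA}$, and therefore $p\in\overline{gA}$ for some $g\in F$, i.e. $g^{-1}p\in\overline A\cap X^*$; thus $Gp$ meets every nonempty clopen subset of $X^*$, and since clopen sets form a base for $X^*$, $Gp$ is dense.

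For part (2): I would run the analogous argument with ``two unbounded sets $A,B$ and some $q\in X^*$ lying in a common orbit with witnesses in $\overline A$ and $\overline B$'' playing the role of topological transitivity. Unwinding $\E_G$-separatedness: $A,B$ are $\E_G$-separated iff for every finite $F$ the set $FA\cap FB$ is $\E_G$-bounded (finite), which on the corona says $(\bigcup_{g\in F}\overline{gA})\cap(\bigcup_{g\in F}\overline{gB})\cap X^*=\emptyset$ for all finite $F$, i.e. $GA^*\cap GB^*=\emptyset$ where $A^*=\overline A\cap X^*$; and $A$ is $\E_G$-unbounded iff $A^*\ne\emptyset$. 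So ``$\E_G$ inseparated'' $\iff$ ``for all infinite $A,B$ the orbits $G\!\cdot\!A^*$ and $G\!\cdot\!B^*$ intersect'' $\iff$ ``$(X^*,G)$ is topologically transitive'', using that the clopen sets $A^*$ form a base for $X^*$ and that $G\!\cdot\!A^*$ meeting $G\!\cdot\!B^*$ is the same as $GA^*$ meeting $B^*$. One subtlety: in the definition of topological transitivity one works with open sets, so I need the reduction to basic clopen sets $A^*$ and the fact that $GU$ dense for all clopen $U$ implies the same for all open $U$; this is routine since $GU\supseteq GU_0$ for any clopen $U_0\subseteq U$.

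The main obstacle, and the step I would be most careful about, is the precise bookkeeping in the dictionary between $\E_G$-boundedness and meeting $X^*$: one must handle the finite ``error sets'' $B$ in the definitions of $\E_G$-large and $\E_G$-discrete correctly (they vanish upon passing to $X^*$, which is exactly why the corona picture is clean), and one must be careful that the base entourages $E_F$ genuinely generate $\E_G$ so that quantifying over finite symmetric $F\ni 1_G$ is equivalent to quantifying over all $E\in\E_G$. Once this dictionary is set up cleanly, both equivalences fall out by the compactness-plus-clopen-base argument sketched above.
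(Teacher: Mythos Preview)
The paper does not actually prove this theorem: it is stated with the attribution \cite{P-Dynamics} and no argument is given in the text, so there is no ``paper's own proof'' to compare against. Your proposal is correct and is exactly the standard argument one would expect for this kind of result: the dictionary ``$A$ is $\E_G$-unbounded $\Leftrightarrow$ $A$ is infinite $\Leftrightarrow$ $A^*:=\overline A\cap X^*\ne\emptyset$'' together with ``$E_F[A]=FA$ modulo a finite set'' reduces both equivalences to elementary manipulations with basic clopen sets in $X^*$ and compactness. Your handling of the two directions in (1) via a compactness cover of $X^*$ by sets $\overline{gA}$, and of (2) via the equivalence ``$A,B$ are $\E_G$-separated $\Leftrightarrow$ $gA\cap hB$ is finite for all $g,h\in G$ $\Leftrightarrow$ $GA^*\cap GB^*=\emptyset$'', is accurate; the reduction from open sets to basic clopen sets and the observation $GA^*\cap GB^*\ne\emptyset\Leftrightarrow GA^*\cap B^*\ne\emptyset$ are both routine, as you note. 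The only cosmetic point is that in the paper's definition of the generating entourages one has $(L\times L)\cup\{(x,y):y\in Fx\}$ with $L$ finite, so $E[A]$ and $FA$ may differ by a finite set; you acknowledge this (``adjusting by a bounded set'', ``finite error sets \dots vanish upon passing to $X^*$''), and it causes no trouble.
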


It is well-known that a dynamical system $(K,G)$ of a (metrizable) compact Hausdorff space $K$ is topologically transitive if (and only if) the orbit $Gx$ of some point $x\in K$ is dense in $K$. In this case we say that the dynamical system {\em has a dense orbit}.

\begin{example}\label{ex:indep} Under $\Asigma<\mathfrak c$ there exists a subgroup  $G\subseteq S_\w$ of cardinality $|G|=\Asigma<\mathfrak c$ such that the coarse structure $\E_G$ is inseparated and hence the dynamical system $(\w^*,G)$ is topologically transitive but has no dense orbits (as the space $\w^*$ has density $\mathfrak c>|G|$).
\end{example}

\begin{proof} By the definition of the cardinal $\mathsf \Sigma$, there exists an inseparated finitary coarse structure $\mathcal E$ of weight $w(\mathcal E)=\mathsf\Sigma$ on $\omega$. By Theorem~\ref{t:G}, the coarse structure $\mathcal E$ coincides with the coarse structure $\mathcal E_G$ generated by some subgroup $G\subseteq S_\omega$ that has cardinality $|G|=w(\mathcal E)=\mathsf\Sigma$. Since the coarse space $\mathcal E=\mathcal E_G$ is inseparated, we can apply Theorem~\ref{t:dyn1}(2) and conclude that the dynamical system $(\omega^*,G)$ is topologically transitive. Since the Stone-\v Cech remainder $\omega^*$ contains continuum many pairwise disjoint open sets, its density is equal to the cardinality of continuous. Since $|G|=\mathsf \Sigma<\mathfrak c$, no orbit $Gp$ is dense in $\omega^*$.
\end{proof}

Example~\ref{ex:indep} cannot be proved in ZFC because of the following theorem. 

\begin{theorem}\label{t=c} Under $\mathfrak t=\mathfrak c$, for every subgroup $H$ of the homeomorphism group of $\w^*$, the dynamical system $(\w^*,H)$ is topologically transitive if and only if it has a dense orbit. 
\end{theorem}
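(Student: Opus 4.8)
The plan is to establish the nontrivial implication, that topological transitivity of $(\w^*,H)$ yields a dense orbit; the reverse implication is automatic and requires no hypothesis on $\mathfrak c$, since if $Hp$ is dense and $U,V\subseteq\w^*$ are nonempty open, then picking $h,g\in H$ with $hp\in U$ and $gp\in V$ gives $gh^{-1}\in H$ with $gp\in gh^{-1}U\cap V$, so $HU$ meets every nonempty open set and hence is dense.

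For the forward direction I would build, by transfinite recursion of length $\mathfrak c$, a point $p\in\w^*$ with dense orbit, using $\mathfrak t=\mathfrak c$ to thread $p$ through a tower. Recall that the nonempty clopen subsets of $\w^*$ are precisely the sets $A^*:=\overline A\cap\w^*$ for $A\in[\w]^\w$, that they form a base of the topology of $\w^*$, and that there are only $\mathfrak c$ of them; fix an enumeration $\{W_\alpha:\alpha\in\mathfrak c\}$ of these basic clopen sets (with repetitions if necessary). I would then choose, by recursion on $\alpha\in\mathfrak c$, sets $U_\alpha\in[\w]^\w$ and elements $h_\alpha\in H$ such that $U_\alpha^*\subseteq U_\beta^*$ for all $\beta<\alpha$ and $h_\alpha(U_\alpha^*)\subseteq W_\alpha$. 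At stage $\alpha$ the family $\{U_\beta:\beta<\alpha\}$ is a $\subseteq^*$-decreasing chain of cardinality $<\mathfrak c=\mathfrak t$, so it has an infinite pseudo-intersection $U$ --- this is the only place where $\mathfrak t=\mathfrak c$ is used --- and $U^*$ is a nonempty clopen subset of $\w^*$. Since $(\w^*,H)$ is topologically transitive, the orbit $HU^*=\bigcup_{h\in H}h(U^*)$ of the nonempty open set $U^*$ is dense and so meets $W_\alpha$; I would fix $h_\alpha\in H$ with $h_\alpha(U^*)\cap W_\alpha\ne\emptyset$. Then $U^*\cap h_\alpha^{-1}(W_\alpha)$, being an intersection of two clopen sets (the preimage of the clopen set $W_\alpha$ under the homeomorphism $h_\alpha$ is clopen), is a nonempty clopen subset of $U^*$, hence equals $U_\alpha^*$ for a suitable $U_\alpha\in[\w]^\w$. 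By construction $U_\alpha^*\subseteq U^*\subseteq U_\beta^*$ for all $\beta<\alpha$, and $h_\alpha(U_\alpha^*)\subseteq W_\alpha$, so both requirements hold.

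Once the recursion is complete, $\{U_\alpha^*:\alpha\in\mathfrak c\}$ is a decreasing chain of nonempty closed subsets of the compact space $\w^*$, hence has nonempty intersection, and I would pick $p$ in it. For every $\alpha\in\mathfrak c$ we then have $p\in U_\alpha^*\subseteq h_\alpha^{-1}(W_\alpha)$, so $h_\alpha(p)\in W_\alpha$; thus the orbit $Hp$ meets every basic clopen set $W_\alpha$ and is therefore dense in $\w^*$, which finishes the proof. I do not anticipate a genuine obstacle here: the one point needing care is that each refinement must simultaneously aim $U^*$ into $W_\alpha$ (via $h_\alpha$) and keep the sets $U_\beta$ $\subseteq^*$-decreasing, which works because clopenness is preserved by finite intersections and by homeomorphic preimages; the hypothesis $\mathfrak t=\mathfrak c$ enters solely to guarantee that every $\subseteq^*$-tower of length $<\mathfrak c$ admits a pseudo-intersection, which is exactly what allows the recursion of length $\mathfrak c$ to proceed.
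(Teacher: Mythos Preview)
Your proof is correct and follows essentially the same approach as the paper: both build a $\subseteq^*$-decreasing tower of length $\mathfrak c$ by using $\mathfrak t=\mathfrak c$ to find pseudo-intersections at each stage and topological transitivity to steer the tower into the $\alpha$-th basic clopen set, then take any ultrafilter extending the tower. The only cosmetic differences are that the paper phrases everything on the $\omega$-side (infinite sets $A_\alpha$, almost-inclusion, extending to an ultrafilter) while you work on the $\omega^*$-side (clopen sets $W_\alpha$, inclusion of stars, compactness to pick a point in the intersection); via Stone duality these are the same argument.
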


\begin{proof} The ``if'' part is trivial and holds without any set-theoretic assumptions. To prove the ``only if'' part, assume that $\mathfrak t=\mathfrak c$ and the dynamical system $(\w^*,H)$ is topologically transitive.

Let $(A_\alpha)_{\alpha\in\mathfrak c}$ be an enumeration of all infinite subsets of $\omega$. By transfinite induction we shall construct a transfinite sequence of infinite subsets $(U_\alpha)_{\alpha\in\mathfrak c}$ of $\omega$ and a transfinite sequence $(g_\alpha)_{\alpha\in\mathfrak c}$ of elements of the group $H$ such that for every $\alpha\in\mathfrak c$ the following conditions are satisfied:
\begin{itemize}
\item[(a)] $U_\alpha\subseteq^* U_\beta$ for all $\beta<\alpha$;
\item[(b)] $g_\alpha(U_\alpha)\subseteq^* A_\alpha$.
\end{itemize}

To start the inductive construction, put $U_0=A_0$ and $g_0$ be the identity of the group $H$. Assume that for some ordinal $\alpha\in\mathfrak c$, a transfinite sequence $(U_\beta)_{\beta<\alpha}$ satisfying the condition (a) has been constructed. By the definition of the tower number $\mathfrak t$ and the equality $\mathfrak t=\mathfrak c>\alpha$, there exists an infinite subset $V_\alpha\subseteq\omega$ such that $V_\alpha\subseteq^* U_\beta$ for all $\beta<\alpha$. The infinite sets $V_\alpha$ and $A_\alpha$ determine basic open sets $V^*_\alpha$ and $A^*_\alpha$ in the space $\omega^*$. Since the action of the group $H$ on $\omega^*$ is topologically transitive, there exist $g_\alpha\in H$ and an infinite subset $U_\alpha\subseteq V_\alpha$ such that $g_\alpha(U^*_\alpha)\subseteq A^*_\alpha$ and hence $g_\alpha(U_\alpha)\subseteq^* A_\alpha$. This completes the inductive step.

After completing the inductive construction, extend the family $\{U_\alpha\}_{\alpha\in\mathfrak c}$ to a free ultrafilter $\mathcal U$ and observe that its orbit intersects each basic open set $A^*_\alpha$, $\alpha\in\mathfrak c$ and hence is dense in $\omega^*$. 
\end{proof}

Theorem~\ref{t:dyn1} and the (original) definition of the cardinal $\Asigma$ given in \cite{Ban} imply the following dynamical characterization of $\Asigma$.

\begin{theorem}\label{t:char-Sigma} The cardinal $\Asigma$ is equal to the smallest cardinality of a subgroup $G\subseteq S_\w$ such that the dynamical system $(\w^*,G)$ is topologically transitive.
\end{theorem}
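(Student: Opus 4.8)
The plan is to unwind the definition of $\Asigma$ from \cite{Ban} and reconcile it with the dynamical characterization in Theorem~\ref{t:dyn1}(2). Recall that $\Asigma$ is the smallest weight $w(\E)$ of an inseparated finitary coarse structure on $\w$. By Theorem~\ref{t:G}, every finitary coarse structure on $\w$ has the form $\E_G$ for some subgroup $G\subseteq S_\w$, and by Theorem~\ref{t:dyn1}(2) such an $\E_G$ is inseparated precisely when the dynamical system $(\w^*,G)$ is topologically transitive. So the task reduces to comparing two cardinal invariants: the minimum of $w(\E_G)$ over all $G$ with $(\w^*,G)$ topologically transitive, versus the minimum of $|G|$ over the same class of $G$. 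I would prove both inequalities between these two quantities.

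First I would establish that $w(\E_G)\le |G|+\w$ for every subgroup $G\subseteq S_\w$. This is essentially the construction of the generating base of $\E_G$ recalled in Section~\ref{s:coarse}: the entourages $E_F(L\times L)\cup\{(x,y):y\in Fx\}$ with $L\in[\w]^{<\w}$ and $F\in[G]^{<\w}$, $1_G\in F$, form a base, and there are at most $|[\w]^{<\w}|\cdot|[G]^{<\w}|=\w\cdot(|G|+\w)=|G|+\w$ of them. Hence if $G$ is infinite and $(\w^*,G)$ is topologically transitive, then $\E_G$ is an inseparated finitary coarse structure of weight $\le|G|$, which gives $\Asigma\le|G|$; minimizing over all such $G$ yields that $\Asigma$ is at most the cardinal in the statement. (One should also note $\Asigma$ is uncountable — the largest finitary coarse structure witnesses inseparatedness and no inseparated coarse structure on $\w$ is metrizable, so the "$+\w$" is harmless.)

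For the reverse inequality, let $\E$ be an inseparated finitary coarse structure on $\w$ with $w(\E)=\Asigma$, and fix a base $\mathcal B$ of $\E$ with $|\mathcal B|=\Asigma$, which we may take to consist of symmetric finitary entourages. For each $B\in\mathcal B$ I would choose, using finitariness, a finite set of permutations of $\w$ whose graphs cover $B$ (each point $x$ has $|B^\pm(x)|\le k$ for a uniform $k$, so by a standard edge-colouring / Hall-type argument $B$ decomposes into at most finitely many partial matchings, each extendable to an element of $S_\w$), and let $G\subseteq S_\w$ be the subgroup generated by the union of all these finite sets together with the finitely supported permutations. Then $|G|\le\Asigma\cdot\w=\Asigma$, and $\E_G\supseteq\E$ while $\E_G$ is still finitary; since $\E$ is inseparated and $\E\subseteq\E_G$, the larger structure $\E_G$ is inseparated as well (an $\E_G$-unbounded set is $\E$-unbounded, and $\E$-non-separatedness is inherited upward). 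By Theorem~\ref{t:dyn1}(2), $(\w^*,G)$ is topologically transitive, and $|G|\le\Asigma$, so the cardinal in the statement is $\le\Asigma$.

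The main obstacle I anticipate is the bookkeeping in the decomposition step: turning a symmetric finitary entourage $B$ on $\w$ into finitely many elements of $S_\w$ whose graphs cover it. This is where finitariness (uniform bound on ball sizes) is essential rather than mere local finiteness, and one has to be a little careful that the matchings can be completed to genuine permutations of the infinite set $\w$ — adjoining the finitely supported permutations to $G$ is the clean way to absorb any mismatch and also guarantees $\E_G$ refines the diagonal-plus-finite behaviour one needs. Everything else (cardinal arithmetic, the upward inheritance of inseparatedness, invoking Theorems~\ref{t:G} and~\ref{t:dyn1}) is routine. It may also be worth remarking that since $\Asigma\ge\mathfrak c$ by Theorem~\ref{t:shrek1}, all the "$+\w$" corrections above are invisible, which slightly streamlines the write-up.
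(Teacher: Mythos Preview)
Your proposal is correct and is precisely the natural way to expand what the paper leaves implicit. The paper gives no argument beyond the sentence ``Theorem~\ref{t:dyn1} and the (original) definition of the cardinal $\Asigma$ given in \cite{Ban} imply the following dynamical characterization of $\Asigma$'', so there is nothing to compare against except intent: the authors are relying on the fact (established in \cite{Ban} and implicit in the proof of Theorem~\ref{t:G}) that for a finitary coarse structure one may always arrange $|G|\le w(\E_G)+\w$, which is exactly the quantitative strengthening of Protasov's representation theorem that you reconstruct in your second paragraph. Your edge-colouring/matching argument for covering a symmetric finitary entourage by finitely many graphs of involutions is the standard way to do this and goes through without difficulty; the upward inheritance of inseparatedness under enlarging a finitary coarse structure is immediate once you note that in any finitary coarse structure on $\w$ the bounded sets are exactly the finite sets, so ``unbounded'' and ``separated'' have the same meaning in $\E$ and in $\E_G\supseteq\E$.

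One cosmetic point: your closing remark that $\Asigma\ge\mathfrak c$ by Theorem~\ref{t:shrek1} quotes what is almost certainly a typo in the paper (the intended bound is $\max\{\mathfrak b,\mathfrak s,\cov(\mathcal N)\}$, not $\mathfrak c$, since $\Asigma\le\non(\mathcal M)\le\mathfrak c$). For your argument you only need $\Asigma\ge\w_1$, which already follows from ``inseparated $\Rightarrow$ nonmetrizable'' or from $\Asigma\ge\mathfrak b$, so nothing is affected.
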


Theorems~\ref{t=c} and \ref{t:char-Sigma} suggest to consider the cardinal $\Asigma^*$ defined as the smallest cardinality of a subgroup $H\subseteq\Homeo(\w^*)$ such that the dynamical system $(\w^*,H)$ is topologically transitive.

\begin{theorem} $\mathfrak t\le\Asigma^*\le\Asigma$.
\end{theorem}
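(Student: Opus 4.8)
The plan is to establish the two inequalities $\mathfrak t\le\Asigma^*$ and $\Asigma^*\le\Asigma$ separately. The upper bound $\Asigma^*\le\Asigma$ should be essentially immediate: by Theorem~\ref{t:char-Sigma}, there is a subgroup $G\subseteq S_\w$ of cardinality $\Asigma$ with $(\w^*,G)$ topologically transitive. Every permutation of $\w$ extends to a homeomorphism of $\beta\w$ and hence restricts to a homeomorphism of $\w^*$, so $G$ determines a subgroup $H$ of $\Homeo(\w^*)$ with $|H|\le|G|=\Asigma$ for which $(\w^*,H)$ is topologically transitive (the orbit structure, and in particular transitivity, is unchanged). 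Hence $\Asigma^*\le\Asigma$. I would spend one sentence on this and move on.

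The substantive part is $\mathfrak t\le\Asigma^*$: I must show that if $H\subseteq\Homeo(\w^*)$ has $|H|<\mathfrak t$, then $(\w^*,H)$ is \emph{not} topologically transitive. The idea is a transfinite back-and-forth of length $|H|$ building a decreasing $\subseteq^*$-tower, exactly in the spirit of the proof of Theorem~\ref{t=c}, but now \emph{avoiding} transitivity rather than exploiting it. Concretely, enumerate $H=\{h_\alpha:\alpha<|H|\}$. I want to produce two infinite sets $A,B\subseteq\w$ such that the basic open set $A^*$ never meets the $H$-orbit of $B^*$, i.e. $h_\alpha(B^*)\cap A^*=\emptyset$ for all $\alpha$. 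Fix $A$ with $\w\setminus A$ infinite at the start (say $A$ = evens). Then build a $\subseteq^*$-decreasing sequence $(U_\alpha)_{\alpha<|H|}$ of infinite subsets of $\w\setminus A$: at stage $\alpha$, since $|H|<\mathfrak t$, the partial tower $(U_\beta)_{\beta<\alpha}$ has an infinite pseudo-intersection $V_\alpha\subseteq\w\setminus A$; now I need to shrink $V_\alpha$ to an infinite $U_\alpha$ so that $h_\alpha(U_\alpha^*)$ is disjoint from $A^*$. This is the key step and I flag it as the main obstacle: for a single homeomorphism $h$ of $\w^*$ and a given infinite set $V\subseteq\w$, I must find an infinite $U\subseteq V$ with $h(U^*)\cap A^*=\emptyset$. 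Since $h(V^*)$ is a nonempty compact (in fact clopen, being the continuous image of the clopen $V^*$) subset of $\w^*$ and $A^*$ is clopen, $h(V^*)\setminus A^*=h(V^*)\cap(\w\setminus A)^*$ is clopen in $\w^*$; if it is nonempty it contains some $W^*$ with $W\subseteq^* \w\setminus A$ infinite, and then $U:=h^{-1}(W^*)\cap\w$ — more carefully, $h^{-1}(W^*)$ is a clopen subset of $V^*$, hence equals $U_\alpha^*$ for some infinite $U_\alpha\subseteq^* V_\alpha$ — does the job. The only thing to rule out is $h(V^*)\subseteq A^*$: but if at \emph{every} stage we were forced into $h_\alpha(V_\alpha^*)\subseteq A^*$ this would itself be a strong structural fact, and in any case we may simply pass to the complement: either $h_\alpha(V_\alpha^*)$ meets $(\w\setminus A)^*$ or it meets $A^*$; choosing $A$ and its role symmetrically (work with a fixed infinite co-infinite $A$ and, if necessary, swap which of $A^*$, $(\w\setminus A)^*$ we are avoiding at stage $\alpha$, recording the choice), one of the two clopen pieces of $h_\alpha(V_\alpha^*)$ is nonempty and we shrink accordingly. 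After the construction, let $\mathcal U$ be any free ultrafilter containing all $U_\alpha$; then $B:=\w\setminus A\in\mathcal U$, so $\mathcal U\in B^*$, and for each $\alpha$, $U_\alpha\in\mathcal U$ forces $h_\alpha(\mathcal U)\in h_\alpha(U_\alpha^*)$, which by construction is disjoint from $A^*$. Hence $A^*\cap H\mathcal U=\emptyset$, so the orbit of $\mathcal U$ is not dense; since transitivity of a dynamical system on $\w^*$ is equivalent to density of \emph{some} orbit only for spaces of small weight, I instead argue directly that $HB^*$ misses the nonempty open set $A^*$ — running the same construction uniformly over a single fixed $\mathcal U\in B^*$ suffices, or one observes that the clopen set $A^*$ witnesses $\bigcup_{\alpha}h_\alpha(B^*)\ne\w^*$. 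Either way $(\w^*,H)$ is not topologically transitive.

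The main obstacle, then, is the bookkeeping needed to guarantee that the shrinking step never stalls — i.e. handling the dichotomy ``$h_\alpha(V_\alpha^*)\subseteq A^*$'' versus ``$h_\alpha(V_\alpha^*)\not\subseteq A^*$'' — and making sure the final ultrafilter (or the final open set $B^*$) lands in the ``avoided'' side for every $\alpha$ simultaneously. I would resolve this by fixing \emph{two} disjoint infinite sets $A_0,A_1$ with $A_0\cup A_1=\w$, and at stage $\alpha$ choosing $i(\alpha)\in\{0,1\}$ and an infinite $U_\alpha\subseteq^* U_{<\alpha}$ with $h_\alpha(U_\alpha^*)\cap A_{i(\alpha)}^*=\emptyset$ (possible since $h_\alpha(V_\alpha^*)$ is clopen nonempty and cannot be contained in \emph{both} $A_0^*$ and $A_1^*$). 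At the end, one of the two sets $\{\alpha:i(\alpha)=0\}$, $\{\alpha:i(\alpha)=1\}$ has size $|H|$; say it is $i=0$. The $U_\alpha$ with $i(\alpha)=0$ still form a $\subseteq^*$-decreasing family of length $|H|<\mathfrak t$ cofinal in the whole tower, so their pseudo-intersection generates a free ultrafilter $\mathcal U$ with $h_\alpha(\mathcal U)\notin A_0^*$ for all $\alpha$ with $i(\alpha)=0$; re-running the construction using only indices with $i(\alpha)=0$ from the start (legitimate, as $|\{\alpha:i(\alpha)=0\}|=|H|$ and this set can be taken to enumerate all of $H$ after reindexing) gives the clean statement $H\mathcal U\cap A_0^*=\emptyset$. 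This completes the proof of $\mathfrak t\le\Asigma^*$, and together with $\Asigma^*\le\Asigma$ the theorem follows.
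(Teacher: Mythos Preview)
Your argument for $\Asigma^*\le\Asigma$ is correct and matches the paper.

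For $\mathfrak t\le\Asigma^*$ there is a genuine gap. The obstacle you flag is real, but your fix does not work. After recording $i(\alpha)\in\{0,1\}$ and passing to the ``majority'' value, say $i=0$, you only obtain $h_\alpha(\mathcal U)\notin A_0^*$ for those $\alpha$ with $i(\alpha)=0$. The set $\{h_\alpha:i(\alpha)=0\}$ is merely a \emph{subset} of $H$; that it has cardinality $|H|$ does not make it equal to $H$, and your ``reindexing'' is circular because the values $i(\alpha)$ depend on the enumeration and on the tower already built. A concrete obstruction: partition $\w=A_0\cup A_1$ and let $h\in\Homeo(\w^*)$ be induced by a bijection of $\w$ swapping $A_0$ and $A_1$, so $H=\{\mathrm{id},h\}$. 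As soon as the tower enters one side, say $U_\gamma\subseteq^* A_1$, every later stage $\beta$ with $h_\beta=\mathrm{id}$ is forced to have $i(\beta)=0$ (since $U_\beta^*\subseteq A_1^*$ misses $A_0^*$), while every later stage with $h_\beta=h$ is forced to have $i(\beta)=1$ (since $h(U_\beta^*)\subseteq A_0^*$ misses $A_1^*$). No single $i$ handles both elements of $H$, and no re-enumeration changes this.

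The paper avoids the difficulty by using transitivity \emph{constructively} rather than trying to refute it. Assume $(\w^*,H)$ is topologically transitive and fix an almost disjoint family $(A_\alpha)_{\alpha<\mathfrak c}$. Build a $\subseteq^*$-decreasing tower $(U_\alpha)_{\alpha<\mathfrak t}$: at stage $\alpha<\mathfrak t$ take a pseudo-intersection $V_\alpha$, use transitivity to find $g_\alpha\in H$ with $g_\alpha(V_\alpha^*)\cap A_\alpha^*\ne\emptyset$, and shrink to $U_\alpha\subseteq^* V_\alpha$ with $g_\alpha(U_\alpha^*)\subseteq A_\alpha^*$. If $g_\alpha=g_\beta$ for $\alpha<\beta$, then $g_\beta(U_\beta^*)\subseteq g_\alpha(U_\alpha^*)\cap g_\beta(U_\beta^*)\subseteq A_\alpha^*\cap A_\beta^*=\emptyset$, contradicting infiniteness of $U_\beta$. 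Hence the $g_\alpha$ are pairwise distinct and $|H|\ge\mathfrak t$. The missing idea in your attempt is precisely this almost disjoint family, which turns ``the tower is steered into $\mathfrak t$ many targets'' into ``$H$ contains $\mathfrak t$ many distinct elements''.
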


\begin{proof} The inequality $\Asigma^*\le\Asigma$ follows from Theorem~\ref{t:char-Sigma}. To prove that $\mathfrak t\le \Asigma^*$, we need to show that a subgroup $H\subseteq \Homeo(\w^*)$ has cardinality $|H|\ge \mathfrak t$ if the dynamical system $(\w^*,H)$ is topologically transitive.

By \cite[8.1]{Blass}, there exists a family $(A_\alpha)_{\alpha\in\mathfrak c}$ of infinite subsets of $\w$ such that for any distinct ordinals $\alpha,\beta\in\mathfrak c$ the intersection $A_\alpha\cap A_\beta$ is finite. Repeating the argument from the proof of Theorem~\ref{t=c}, we can use the topological transitivity of the dynamical system $(\w^*,H)$ and construct a transfinite sequence of infinite subsets $(U_\alpha)_{\alpha\in\mathfrak t}$ of $\omega$ and a transfinite sequence $(g_\alpha)_{\alpha\in\mathfrak t}$ of elements of the group $H$ such that for every $\alpha\in\mathfrak t$ the following conditions are satisfied:
\begin{itemize}
\item[(a)] $U_\alpha\subseteq^* U_\beta$ for all $\beta<\alpha$;
\item[(b)] $g_\alpha(U_\alpha)\subseteq^* A_\alpha$.
\end{itemize}
We claim that the sequence $(g_\alpha)_{\alpha\in\mathfrak t}$ consists of pairwise distinct elements of the group $H$. To derive a contradiction, assume that $g_\alpha= g_\beta$ for some ordinals $\alpha<\beta$ in $\mathfrak t$. Then $$g_\beta(U_\beta)=g_\alpha(U_\beta)\cap g_\beta(U_\beta)\subseteq^* g_\alpha(U_\alpha)\cap g_\beta(U_\beta)\subseteq A_\alpha\cap A_\beta$$and hence the set $g_\beta(U_\beta)$ is finite and so is the set $U_\beta$, which contradicts the choice of $U_\beta$. This contradiction shows that $|H|\ge|\{g_\alpha\}_{\alpha\in\mathfrak t}|=\mathfrak t$ and hence $\Asigma^*\ge \mathfrak t$.
\end{proof}

\begin{problem} Is the strict inequality $\Asigma^*<\Asigma$ consistent?
\end{problem}

A point $x$ of a topological space $X$ is called 
\begin{itemize}
\item a {\em $P$-point} if every $G_\delta$-set $G\subseteq X$ that contains $x$ is a neighborhood of $x$;
\item a {\em weak $P$-point} if $x\notin\overline C$ for any countable set $C\subseteq X\setminus\{x\}$.
\end{itemize}

For any coarse space $(X,\E)$, the canonical map  $\delta:X\to h(X,\E)$ from $X$ to its Higson compactification has a unique continuous extension $\bar\delta:\beta X\to h(X,\E)$ to the Stone-\v Cech compactification $\beta X$ of $X$ endowed with the discrete topology. If the coarse structure $\E$ is locally finite, then $\bar\delta(\beta X\setminus X)=\hbar(X,\E)$. For a free ultrafilter $p$ on $X$ let $\check p$ be the set $\bar\delta^{-1}(\bar\delta(p))$. For two free ultrafilters $p,q$ on $X$ we have $\check p=\check q$ if and only if for any bounded slowly oscillating function $f:X\to\IC$ and its continuous extension $\bar f:\beta X\to\IC$ we have $\bar f(p)=\bar f(q)$. 

Our next questions ask about the interplay between properties of points in $\beta X\setminus X$ and $\hbar(X,\E)$.
By Theorem 3 \cite{b6}, for a metrizable finitary coarse structure $\E$ on $\w$ the Higson corona $\hbar(\omega, \mathcal{E})$  contains  a weak $P$-point. Moreover, $\hbar(\w,\E)$ contains a $P$-point if $\beta\w\setminus\w$ contains a  $P$-point.

\begin{problem}[\cite{b6}] Let $\mathcal{E}$  be a finitary metrizable  coarse structure on $\omega$ and $p$ be a free ultrafilter on $\w$.
\begin{itemize}
\item[(i)] Is $\bar\delta(p)$ a weak  $P$-point in $\hbar(\omega,\mathcal{E})$ if  $p$ is a weak $P$-point?
\item[(ii)] Does the set $\check p$ contain a weak  $P$-point in $\beta\w\setminus \w$ if $\bar\delta(p)$ is a weak $P$-point in $\hbar(\w,\E)$?
\end{itemize} 
\end{problem}

Given a countable subgroup $G\subseteq S_\w$, consider the finitary coarse structure $\E_G$ on $\w$ induced by the action of the group $G$. 
 By analogy with Theorem 4.1 \cite{b7}, it can be shown that for any $P$-point $p\in\w^*$ the set $\check p$ coincides with the closure $\overline{Gp}$ of the orbit of $p$ under the action of the group $G$ on $\w^*$. 

\begin{problem}[\cite{b7}] Is a free ultrafilter $p$ on $\w$ a $P$-point if $\check p=\overline{Gp}$ for any countable subgroup $G\subseteq S_\w$?
\end{problem}

If an answer to this question is negative, then another question is of interest.

\begin{problem}[\cite{b7}]  In ZFC, does there exists a free ultrafilter $p$ on $\w$ such that $\check{p}=\overline{Gp}$ for any countable subgroup $G\subseteq S_\w$?
\end{problem}

\section{Partitions of coarse spaces into thin subsets}\label{s6}

Let $(X,\E)$ be a coarse space and $m$ be a natural number. A subset $T\subseteq X$ is called {\em $m$-thin} if for any entourage $E\in\E$ there exists an $\E$-bounded set $B\subseteq X$ such that $|E(x)\cap T|\le m$ for every $x\in T\setminus B$. It is clear that $T\subseteq X$ is $1$-thin if and only if $T$ is $\E$-discrete; $1$-thin sets are called {\em thin}. 

The following dynamical characterization of $m$-thin sets was proved in \cite[Proposition~4]{PS15} (for $m=1$ this characterization was proved in Theorem 3.2 in  \cite{PS14}).

\begin{theorem} Let $m\in\mathbb N$ and $G\subseteq S_\w$ be a subgroup. A subset $T\subseteq\w$ of the coarse space $(\w,\E_G)$ is $m$-thin if and only if $|\overline T\cap Gp|\le m$ for any ultrafilter $p\in\w^*$. Here $\overline T$ is the closure of the set $T$ in $\beta\w$. 
\end{theorem}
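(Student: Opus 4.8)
The plan is to establish both implications by translating the notion of an $m$-thin set into a statement about closures of orbits in $\beta\w$, using the fact that $\bar T$, the closure of $T$ in $\beta\w$, is canonically homeomorphic to $\beta T$ and that points of $\bar T\setminus T$ are exactly the free ultrafilters on $\w$ containing $T$. The two directions are essentially contrapositive reformulations of each other, so I would set up the combinatorial dictionary once and apply it twice.

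First I would unpack what it means for $T$ to fail to be $m$-thin. By definition, $T$ is not $m$-thin precisely when there is an entourage $E\in\E_G$ such that for every $\E_G$-bounded set $B$ there exists $x\in T\setminus B$ with $|E(x)\cap T|\ge m+1$. Since $\E_G$-bounded sets in a finitary coarse structure on $\w$ are exactly the finite sets, and since $E$ may be taken of the basic form $\{(x,y):y\in Fx\}$ for a finite symmetric $F\subseteq G$ containing $1_G$ (enlarging $E$ only helps), this says: there is a finite $F\subseteq G$ such that the set $T_F:=\{x\in T: |Fx\cap T|\ge m+1\}$ is infinite. For such $x$, pick $m+1$ distinct elements $g_0x,\dots,g_mx\in Fx\cap T$ with $g_0,\dots,g_m\in F$; by the pigeonhole principle (finiteness of $F^{m+1}$) I can pass to an infinite subset $S\subseteq T_F$ on which the tuple $(g_0,\dots,g_m)$ is constant, so that $g_0x,\dots,g_mx$ are $m+1$ pairwise distinct points of $T$ for every $x\in S$, realized by fixed group elements. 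Now take any free ultrafilter $p\in\w^*$ with $S\in p$. Then $g_0p,\dots,g_mp$ are $m+1$ pairwise distinct points of $\w^*$ (distinctness because $\{x\in S: g_ix=g_jx\}$ is finite for $i\ne j$, since $g_i^{-1}g_j\ne 1_G$ acts freely off a finite set... more precisely $\{x: g_i x = g_j x\}$ need not be finite, but on $S$ the points $g_i x$ are pairwise distinct by construction, hence $g_ip\ne g_jp$), each lying in $\bar T$ because $g_iS\subseteq T$ and $g_iS\in g_ip$, and each lying in $Gp$. Hence $|\bar T\cap Gp|\ge m+1$, contradicting the right-hand side. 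This proves the "only if" direction.

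For the converse, suppose $|\bar T\cap Gp|\ge m+1$ for some $p\in\w^*$; I must show $T$ is not $m$-thin. Pick $g_0,\dots,g_m\in G$ with $g_0p,\dots,g_mp$ pairwise distinct and all in $\bar T$, i.e.\ $T\in g_ip$ for each $i$, equivalently $g_i^{-1}T\in p$. Then $S:=\bigcap_{i=0}^m g_i^{-1}T\in p$ is infinite, and for every $x\in S$ the points $g_0x,\dots,g_mx$ all lie in $T$. The only subtlety is ensuring these $m+1$ points are genuinely distinct for infinitely many $x$: since $g_ip\ne g_jp$, the set $\{x: g_ix=g_jx\}=\{x:(g_j^{-1}g_i)x=x\}$ does not belong to $p$, so its complement is in $p$; intersecting over the finitely many pairs $i<j$, I may shrink $S$ (still a member of $p$, hence infinite) so that on $S$ all $g_0x,\dots,g_mx$ are pairwise distinct. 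Put $F=\{g_i^{-1}g_j: 0\le i,j\le m\}\cup\{1_G\}$, a finite subset of $G$, and let $E=\{(x,y):y\in Fx\}\in\E_G$. For every $x\in g_0 S$ (an infinite subset of $T$), writing $x=g_0 t$ with $t\in S$, the points $g_i g_0^{-1} x = g_i t$ for $i=0,\dots,m$ are $m+1$ distinct elements of $E(x)\cap T$, so $|E(x)\cap T|\ge m+1$. Since $g_0 S\subseteq T$ is infinite and $\E_G$-bounded sets are finite, no bounded $B$ can contain all but finitely many such $x$, so $T$ is not $m$-thin.

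The main obstacle, as the sketch makes clear, is purely bookkeeping: keeping track of which shrinkings of the witnessing set remain in the ultrafilter (hence infinite) and verifying that the $m+1$ translates stay pairwise distinct on a large set — this is where one uses that $g_ip\ne g_jp$ means $\{x:g_ix=g_jx\}\notin p$. Once the dictionary "$g_ip$ distinct $\Longleftrightarrow$ the $g_ix$ distinct on a set in $p$" and "$p\in\bar T\Longleftrightarrow T\in p$" is in place, both implications fall out, and the reduction to basic entourages $E=\{(x,y):y\in Fx\}$ with $F$ finite (legitimate since $\E_G$ has such a base and enlarging $E$ weakens the $m$-thin condition) is what makes the finitary structure do the work. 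No appeal to Theorem~\ref{t:G} is needed beyond the explicit description of the base of $\E_G$ already recorded in Section~\ref{s:coarse}.
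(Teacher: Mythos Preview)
The paper does not actually prove this theorem; it only cites \cite[Proposition~4]{PS15} (and \cite[Theorem~3.2]{PS14} for the case $m=1$). So there is no in-paper proof to compare against, and your argument stands on its own. It is the natural one and essentially correct: the dictionary ``$q\in\bar T\Leftrightarrow T\in q$'' together with ``$g_ip\ne g_jp\Leftrightarrow\{x:g_ix=g_jx\}\notin p$'' (the three-set lemma for permutations of $\omega$) is exactly what drives both directions, and your pigeonhole reduction to a fixed tuple $(g_0,\dots,g_m)$ on an infinite $S$ is the right move.

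There is one small bookkeeping slip in the ``if'' direction. You set $F=\{g_i^{-1}g_j:0\le i,j\le m\}$ and then need $g_ig_0^{-1}\in F$ so that $g_it\in E(x)=Fx$ for $x=g_0t$; but $g_ig_0^{-1}$ is not in general of the form $g_a^{-1}g_b$. The fix is trivial: take $F=\{g_ig_j^{-1}:0\le i,j\le m\}$ (which already contains $1_G$), or simply replace $p$ by $g_0p$ at the outset so that without loss of generality $g_0=1_G$, after which $S\subseteq T$ and $F=\{g_0,\dots,g_m\}$ works directly. With this correction the proof is complete.
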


This theorem implies  the following characterization of decomposability into finitely many thin pieces.

\begin{theorem} For a subgroup $G\subseteq S_\w$, a subset $T\subseteq\w$ of the coarse space $(\w,\E_G)$ can be covered by finitely many thin sets if and only if any ultrafilter $p\in \overline T\cap\w^*$ contains a set $P\in p$ such that $|\overline  P\cap Gq|\le 1$ for every $q\in \w^*$. 
\end{theorem}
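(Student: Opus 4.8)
The plan is to reduce the statement to the preceding theorem (the dynamical characterization of $m$-thin sets, applied with $m=1$), using only the elementary Stone-space dictionary for $\beta\w$: for $A,B\subseteq\w$ the closure $\bar A$ is the clopen set $\{q\in\beta\w:A\in q\}$, one has $\overline{A\cap B}=\bar A\cap\bar B$ and $\overline{\w\setminus A}=\beta\w\setminus\bar A$, and $\bar A\cap\w=A$.

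For the ``only if'' part I would argue as follows. Suppose $T=T_1\cup\dots\cup T_n$ with each $T_i$ thin, and take any $p\in\bar T\cap\w^*$. Since $\bar T=\bar T_1\cup\dots\cup\bar T_n$, the ultrafilter $p$ lies in some $\bar T_i$, which means $T_i\in p$; setting $P:=T_i$, the preceding theorem applied to the thin set $T_i$ gives $|\bar P\cap Gq|=|\bar T_i\cap Gq|\le 1$ for every $q\in\w^*$, as required.

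For the ``if'' part, for each $p\in\bar T\cap\w^*$ I would fix a set $P_p\in p$ with $|\bar P_p\cap Gq|\le 1$ for all $q\in\w^*$. Then $\{\bar P_p:p\in\bar T\cap\w^*\}$ is a cover of the compact set $\bar T\cap\w^*$ by clopen subsets of $\beta\w$, each $\bar P_p$ being an open neighbourhood of $p$; so finitely many of them, $\bar P_{p_1},\dots,\bar P_{p_k}$, already cover $\bar T\cap\w^*$. Put $T_0:=T\setminus(P_{p_1}\cup\dots\cup P_{p_k})$. The closure identities give $\overline{T_0}=\bar T\setminus(\bar P_{p_1}\cup\dots\cup\bar P_{p_k})$, which is disjoint from $\bar T\cap\w^*$ and hence contained in $\bar T\cap\w=T\subseteq\w$; a closed subset of $\beta\w$ sitting inside the discrete set $\w$ is finite, so $T_0$ is finite and therefore thin (its closure meets no orbit $Gq\subseteq\w^*$, so the preceding theorem applies trivially with $m=1$). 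For each $i$ one has $|\overline{T\cap P_{p_i}}\cap Gq|\le|\bar P_{p_i}\cap Gq|\le 1$ for every $q\in\w^*$, so $T\cap P_{p_i}$ is thin by the preceding theorem, and $T=T_0\cup\bigcup_{i=1}^{k}(T\cap P_{p_i})$ is a covering of $T$ by $k+1$ thin sets.

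I do not anticipate a real obstacle here; the two points that need care are the compactness step in the ``if'' direction (one must check that each $\bar P_p$ is genuinely a neighbourhood of $p$, which holds because $P_p\in p$ and $P_p\subseteq\w$, so that compactness of $\bar T\cap\w^*$ yields a finite subcover) and keeping the bookkeeping of the various closures straight, both of which are routine with the Stone-space dictionary in hand.
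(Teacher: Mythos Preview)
Your argument is correct and is exactly the derivation the paper has in mind: the paper does not spell out a proof but simply writes ``This theorem implies the following characterization of decomposability into finitely many thin pieces,'' and your reduction to the preceding $m=1$ case via the clopen base of $\beta\w$ and a compactness argument on $\bar T\cap\w^*$ is precisely how that implication goes through. The bookkeeping with closures (in particular $\overline{T_0}=\bar T\setminus\bigcup_i\bar P_{p_i}$ and the finiteness of $T_0$) is handled correctly.
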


By \cite{b15}, for every $n,m\in\mathbb N$ and Abelian group $G$ of cardinality $\aleph _n$, every $m$-thin subset $T\subseteq G$ in the coarse space $(G,\E_G)$ is the union of  $m^{n+1}$ thin subsets. On the other hand, there exists a group $G$  of cardinality  $\aleph _{\omega}$ such that the finitary coarse space $(G,\E_G)$ contains a 2-thin subset which cannot be covered  by finitely many  thin subsets. 
In fact, similar examples can be found among finitary coarse spaces of countable cardinality. 

A group $G$ is called {\em Boolean} if each element $x\in G$ has order $\le 2$.  A subgroup $G\subseteq S_\w$ is defined to be {\em almost Boolean} if the quotient group $G/(G\cap S_{<\w})$ is Boolean. In this definition $S_{<\w}$ denotes the normal subgroup of $S_\w$ consisting of permutations $f\in S_\w$ that have finite support $\supp(f)=\{x\in\w:f(x)\ne x\}$. 

\begin{theorem}\label{t:thin} There exists an almost Boolean subgroup $G\subseteq S_\w$ of cardinality $|G|=\mathfrak r$ such that the finitary coarse space $(\w,\E_G)$ is cellular and $2$-thin but $\w$ cannot be covered by finitely many $1$-thin subspaces.
\end{theorem}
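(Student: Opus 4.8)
The plan is to build the group $G$ directly on $\w$ viewed as a union of finite blocks, using a reaping family of size $\mathfrak r$ to control the thinness properties. First I would fix a partition $\w=\bigsqcup_{n\in\w}I_n$ into consecutive intervals with $|I_n|\to\infty$, and on each block $I_n$ choose an involution $t_n\in S_{I_n}$ acting freely (a fixed-point-free product of transpositions, possible once $|I_n|$ is even, say). For a subset $A\subseteq\w$ define the permutation $\sigma_A\in S_\w$ that acts as $t_n$ on $I_n$ when $n\in A$ and as the identity on $I_n$ when $n\notin A$. Each $\sigma_A$ is an involution, and $\sigma_A\sigma_B=\sigma_{A\triangle B}$, so the family $\{\sigma_A:A\subseteq\w\}$ is a Boolean subgroup of $S_\w$ isomorphic to $(\mathcal P(\w),\triangle)$. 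Let $\mathcal R\subseteq[\w]^\w$ be a reaping family of size $\mathfrak r$, and let $G$ be the subgroup of $S_\w$ generated by $\{\sigma_A:A\in\mathcal R\}$ together with $S_{<\w}\cap S_\w$ (the finitely supported permutations); then $|G|=\mathfrak r$ (the finitely supported part is countable) and $G$ is almost Boolean because $G/(G\cap S_{<\w})$ is generated by the images of the commuting involutions $\sigma_A$.

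Next I would verify the coarse-geometric properties of $(\w,\E_G)$. Since $G$ is generated by involutions $\sigma_A$ each of which has all orbits of size $\le 2$, and since a finite subset $F\subseteq G$ together with a finite $L\subseteq\w$ generates an entourage of the form described in the definition of $\E_G$, one checks that $\E_G$ has a base of cellular (equivalence-relation) entourages: the entourage attached to $F$ is, modulo a bounded set, the orbit equivalence relation of the finite subgroup $\langle F\rangle$ restricted to large enough blocks — cellular because $\langle F\rangle$ acts with finite orbits and any finite group action's orbit relation is an equivalence relation. For $2$-thinness of the whole space $\w$: fix an entourage $E\in\E_G$; outside a bounded set it is contained in the orbit relation of some finite $\langle\sigma_{A_1},\dots,\sigma_{A_k}\rangle$. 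Because each $\sigma_{A_i}$ restricted to a block $I_n$ is either the identity or the fixed involution $t_n$, the subgroup $\langle\sigma_{A_1},\dots,\sigma_{A_k}\rangle$ acts on each block either trivially or exactly as $\langle t_n\rangle$, whose orbits have size $\le 2$; hence $|E(x)\cap\w|\le 2$ for all but boundedly many $x$, so $\w$ is $2$-thin. (One uses here that all the $t_n$ commute and in fact the group they generate across blocks is still an elementary abelian $2$-group acting with orbits of size $\le2$ on each block.)

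The crux is showing $\w$ cannot be covered by finitely many $1$-thin sets, and this is where the reaping family enters. Suppose $\w=T_1\cup\dots\cup T_m$ with each $T_j$ thin. The pair $\{x,t_n(x)\}$ is an orbit of $t_n$ inside $I_n$; say the "bottom half" $B_n$ of $I_n$ is a transversal. Consider, for each $n$, whether $B_n$ is "$\mathcal R$-split" in the relevant sense — more precisely I would argue that for a thin set $T$ and the entourage coming from a single $\sigma_A$ with $A\in\mathcal R$, thinness forces $T$ to meet each $2$-element orbit $\{x,t_n(x)\}$, $n\in A$, in at most one point off a bounded set; summing the $m$ thin sets, for each $n\in A$ and each orbit there is room for all $\le 2$ elements only if $m\ge 2$, so this alone is not a contradiction. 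The real obstruction must come from using \emph{infinitely many} $A\in\mathcal R$ simultaneously: I expect to encode a coloring $c:\w\to\{1,\dots,m\}$ (which $T_j$ contains $x$, for $x$ in a chosen transversal of the "full" involution on each block) and use the reaping property to find $A\in\mathcal R$ on which the partner points $t_n(x)$ are forced into the \emph{same} color class, violating thinness of that class with respect to the entourage $E_{\sigma_A}$. Making this precise — choosing the transversals, reducing the finite bounded "errors", and pinning down exactly which combinatorial statement about $\mathcal R$ is needed (likely that no $m$-coloring of $\w$ avoids a monochromatic "large" set reaped by $\mathcal R$, which is exactly the definition of $\mathfrak r$) — is the main obstacle, and I would devote the bulk of the argument to it. I would also, at the end, double-check the lower bound $|G|\ge\mathfrak r$: if $G$ could be generated (mod $S_{<\w}$) by fewer than $\mathfrak r$ of the $\sigma_A$'s, the corresponding subsets would fail to be reaping, contradicting the choice of $\mathcal R$, so in fact one should start from a reaping family that is minimal or argue the bound intrinsically from the non-coverability statement.
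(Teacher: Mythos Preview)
Your construction has a genuine gap that cannot be repaired: the space $(\w,\E_G)$ you build is \emph{always} coverable by two thin sets. Indeed, since every $\sigma_A$ acts on each block $I_n$ either trivially or via the \emph{same} fixed involution $t_n$, any finitely generated subgroup $\langle\sigma_{A_1},\dots,\sigma_{A_k}\rangle$ acts on $I_n$ with orbits that are either singletons or the $t_n$-orbits $\{x,t_n(x)\}$. Choose a global transversal $T_1$ for the $t_n$'s (one point from each two-element $t_n$-orbit in every block) and set $T_2=\w\setminus T_1$. Then for every entourage $E\in\E_G$ and every $x\in T_1$ we have $E(x)\subseteq\{x,t_n(x)\}$ (modulo a finite set), and $t_n(x)\in T_2$; hence $T_1\cap E(x)=\{x\}$ and $T_1$ is thin. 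Symmetrically $T_2$ is thin. Your vague plan at the end --- to pass from a coloring of \emph{points} to a coloring of \emph{block indices} and invoke reaping --- founders exactly here: the transversal coloring has \emph{no} monochromatic two-element orbit in any block, so no $A\in\mathcal R$ can witness a violation of thinness.

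The paper's argument avoids this by taking the involutions to have \emph{almost disjoint supports contained in members of the reaping family itself}. Concretely, one produces (via a Baumgartner--Hajnal--M\'at\'e refinement lemma in the case $\mathfrak r<\mathfrak c$, and via an ultrafilter extending a certain filter when $\mathfrak r=\mathfrak c$) an almost disjoint family $\{A_\alpha\}_{\alpha<\mathfrak r}$ with $A_\alpha\subseteq R_\alpha$, and lets $f_\alpha$ be an involution with $\supp(f_\alpha)=A_\alpha$. Almost disjointness of the supports yields almost-Booleanness, cellularity, and $2$-thinness as in your argument. The point you are missing is the non-coverability step: given any finite partition of $\w$, some cell $P$ contains some $R_\alpha$, hence contains the \emph{entire support} $A_\alpha$ of $f_\alpha$; then for infinitely many $x\in A_\alpha\subseteq P$ one has $f_\alpha(x)\ne x$ and $f_\alpha(x)\in A_\alpha\subseteq P$, so $P$ is not thin. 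The essential difference from your setup is that here the reaping family lives on the \emph{same} copy of $\w$ as the points, and the involution's support sits \emph{inside} a reaping set --- there is no global transversal because different $f_\alpha$'s pair up different points.
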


\begin{proof} We divide the proof of this theorem into three lemmas.

\begin{lemma}\label{l:thin=c} There exists an almost Boolean subgroup $G\subseteq S_\w$ of cardinality $|G|=\mathfrak c$ such that the finitary coarse space $(\w,\E_G)$ is cellular and $2$-thin but $\w$ cannot be covered by finitely many $1$-thin subspaces.
\end{lemma}

\begin{proof} By \cite[8.1]{Blass}, there exists a family $\A\subseteq[\w]^\w$ of cardinality $|\A|=\mathfrak c$, which is {\em almost disjoint} in the sense that $A\cap B$ if finite for any distinct sets $A,B\in\A$.
 
Fix any ultrafilter $\mathcal U$ on $\w$ containing the filter $$\mathcal F=\{X\in[\w]^\w:|\{A\in\A:A\not\subseteq^* X\}|<\mathfrak c\}.$$

Let $\{U_\alpha\}_{\alpha\in\mathfrak c}$ be an enumeration of the ultrafilter $\U$. By transfinite induction, for every $\alpha\in\mathfrak c$ choose a set $A_\alpha\in\A\setminus\{A_\beta\}_{\beta\in\alpha}$ such that the set $A_\alpha\cap U_\alpha$ is infinite. The choice of $A_\alpha$ is always possible since the set $\{A\in\A:|A\cap U_\alpha|=\w\}$ has cardinality of continuum (in the opposite case, the set $\w\setminus U_\alpha$  belongs to the filter $\F\subseteq\U$ but has empty intersection with $U_\alpha$, which is a desired contradiction).

After completing the inductive construction, for every $\alpha\in\mathfrak c$, choose an involution $f_\alpha$ of $\w$ whose support $\supp(f_\alpha):=\{x\in\w:f(x)\ne x\}$ is an infinite subsets of $U_\alpha\cap A_\alpha$. Let $G\subseteq S_\w$ be the subgroup, generated by the set $\{f_\alpha\}_{\alpha\in\mathfrak c}$. Taking into account that the family $(\supp(f_\alpha))_{\alpha\in\mathfrak c}$ is almost disjoint, one can show that the group $G$ is almost Boolean and the coarse space $(\w,\E_G)$ is cellular and $2$-thin. 

To see that $(\w,\E_G)$ cannot be covered by finitely many thin sets, it suffices to observe that for every finite partition of $\w$ one of the cells of the partition belongs to the ultrafilter $\U$, and that for every $\alpha\in\mathfrak c$ the set $U_\alpha\in\U$ is not $1$-thin since the set $\{x\in U_\alpha:f_\alpha(x)\ne x\}$ is infinite.
\end{proof} 

For treating the case $\mathfrak r<\mathfrak c$, we shall exploit the following lemma, which is a special case of Theorem 2.1 in \cite{BHM}.

\begin{lemma}\label{l:a'} For any cardinal $\kappa<\mathfrak c$ and family of infinite sets $\{I_\alpha\}_{\alpha\in\kappa}\subseteq[\w]^\w$, there exists an almost disjoint family $\{A_\alpha\}_{\alpha\in\kappa}\subseteq[\w]^\w$ such that $A_\alpha\subseteq I_\alpha$ for every $\alpha\in\kappa$.
\end{lemma}

\begin{lemma}\label{t:thinr} If $\mathfrak r<\mathfrak c$, then there exists an almost Boolean subgroup $G\subseteq S_\w$ of cardinality $|G|=\mathfrak r$ such that the finitary coarse space $(\w,\E_G)$ is cellular and $2$-thin but $\w$ cannot be covered by finitely many $1$-thin subspaces.
\end{lemma}

\begin{proof} By definition of the cardinal $\mathfrak r$, there exists a family $\{R_\alpha\}_{\alpha\in\mathfrak r}\subseteq[\w]^\w$ such that for any finite partition of $\w$ one of the cells of the partition contains the set $R_\alpha$ for some $\alpha\in\mathfrak r$. 

By Lemma~\ref{l:a'}, there exists an almost disjoint family of infinite sets $(A_\alpha)_{\alpha\in\mathfrak r}$ in $\w$ such that $A_\alpha\subseteq R_\alpha$ for every $\alpha\in\mathfrak r$. For every $\alpha\in\mathfrak r$ choose a bijective function $f_\alpha\in S_\w$ such that $f_\alpha\circ f_\alpha$ is the indentity map of $\w$ whose support $\supp(f_\alpha)=\{x\in\w:f_\alpha(x)\ne x\}$ coincides with the infinite set $A_\alpha$.

Let $G$ be the subgroup of $S_\w$, generated by the set of involutions $\{f_\alpha\}_{\alpha\in\mathfrak r}$. Taking into account that the family $(\supp(f_\alpha))_{\alpha\in\mathfrak r}$ is almost disjoint, we can show that the group $G$ is almost Boolean and the coarse space $(G,\E_G)$ is cellular and $2$-thin.

Next, we show that $\w$ cannot be covered by finitely many $1$-thin sets. Assuming that such a finite cover exists, we can find $\alpha\in\mathfrak r$ such that the set $R_\alpha$ is contained in one of the cells of the cover and hence $R_\alpha$ is 1-thin. On the other hand, for the entourage $E_\alpha=\Delta_\w\cup\{(x,f_\alpha(x)):x\in \w\}$ the set $\{x\in\w:R_\alpha\cap E_\alpha(x)\ne \{x\}\}$ contains the infinite set $A_\alpha$, which means that the set $R_\alpha$ is not $1$-thin. 
\end{proof}
\end{proof}

\begin{problem} Is there a ZFC-example of a subgroup $G\subseteq S_\w$ of cardinality $|G|=\w_1$ such that the finitary coarse space $(\w,\E_G)$ is $2$-thin but cannot be covered by finitely many $1$-thin subspaces.
\end{problem}

\begin{theorem}\label{t:thin-n} For every number $n\in\mathbb N$ there exists an almost Boolean subgroup $G\subseteq S_\w$ of cardinality $|G|=\mathfrak u$ such that
\begin{enumerate}
\item the finitary coarse space $(\w,\E_G)$ is cellular and $2$-thin;
\item $(\w,\E_G)$ can be covered by $n$ thin sets;
\item $(\w,\E_G)$ cannot be covered by $(n-1)$ thin sets.
\end{enumerate}
\end{theorem}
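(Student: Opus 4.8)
The plan is to run the construction of Lemmas~\ref{l:thin=c} and~\ref{t:thinr} on $\w$ equipped with an auxiliary partition into $n$ ``layers'' that forces the sharp bound $n$; we may assume $n\ge 2$ (for $n=1$ the statement is degenerate, since a coarse space covered by a single thin set must be $1$-thin, which forces $\E_G$ to be the smallest coarse structure and hence $|G|\le\aleph_0$). Identify $\w$ with $\w\times\{1,\dots,n\}$, put $C_i:=\w\times\{i\}$ for the $i$-th layer (so $\w=C_1\sqcup\dots\sqcup C_n$), and call each set $\{m\}\times\{1,\dots,n\}$ a \emph{column}. I would take as generators of $G$ involutions that, over an almost disjoint family of columns, transpose one prescribed pair of layers. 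Because distinct layers get swapped, the layer decomposition $C_1\cup\dots\cup C_n$ will automatically witness~(2); because every column has $n$ points while only $n-1$ colours are available, a pigeonhole argument run through a free ultrafilter will rule out any cover by $n-1$ thin sets and give~(3); and $2$-thinness and cellularity in~(1) come for free from almost disjointness of the supports, exactly as in Lemma~\ref{l:thin=c}.

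Concretely, first fix a free ultrafilter $\U$ on the first coordinate $\w$ with a base $\{R_\gamma:\gamma\in\mathfrak u\}$ of cardinality $\mathfrak u$, and then choose an almost disjoint family $\{B^{ij}_\gamma:\gamma\in\mathfrak u,\ 1\le i<j\le n\}$ of infinite subsets of $\w$ with $B^{ij}_\gamma\subseteq R_\gamma$ for all $\gamma,i,j$: when $\mathfrak u<\mathfrak c$ this is Lemma~\ref{l:a'}, and when $\mathfrak u=\mathfrak c$ one repeats the recursion from the proof of Lemma~\ref{l:thin=c} (enumerating $\U$ and, at step $\alpha$, choosing $\binom n2$ fresh members of a fixed almost disjoint family of size $\mathfrak c$ meeting $U_\alpha$ in an infinite set, after first enlarging $\U$ to contain the filter $\F$ from that proof). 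For $\gamma\in\mathfrak u$ and $1\le i<j\le n$ let $g^{ij}_\gamma$ be the involution with support $B^{ij}_\gamma\times\{i,j\}$ that swaps $(m,i)$ with $(m,j)$ for each $m\in B^{ij}_\gamma$, and let $G\le S_\w$ be generated by all the $g^{ij}_\gamma$. The supports form an almost disjoint family of pairwise distinct sets, so $|G|=\mathfrak u$; the generators commute modulo $S_{<\w}$, so $G$ is almost Boolean; and, exactly as in Lemma~\ref{l:thin=c}, $(\w,\E_G)$ is cellular and $2$-thin, which is~(1).

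For~(2) observe that every generator moves each point of its support out of its own layer into a different layer, so for a finite $F\subseteq G$ one gets $|E_F(x)\cap C_l|=1$ for each $l$ and all but finitely many $x\in C_l$ (the exceptions lying in the finitely many pairwise intersections of the sets $B^{ij}_\gamma$ touched by $F$); hence each $C_l$ is $1$-thin and $\w=C_1\cup\dots\cup C_n$ is the required cover. For~(3), assume $\w$ is covered by $n-1$ thin sets; shrinking them, we may assume they are disjoint, i.e.\ we have a colouring $c:\w\to\{1,\dots,n-1\}$. The ``column type'' map $m\mapsto(c(m,1),\dots,c(m,n))$ on the first coordinate $\w$ has finite range, hence is constant, equal to some non-injective $\phi:\{1,\dots,n\}\to\{1,\dots,n-1\}$, on some $U_0\in\U$; pick $i_0<j_0$ with $\phi(i_0)=\phi(j_0)=:l_0$ and $\gamma_0$ with $R_{\gamma_0}\subseteq U_0$. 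Then for every $m$ in the infinite set $B^{i_0j_0}_{\gamma_0}\subseteq U_0$ the generator $g^{i_0j_0}_{\gamma_0}$ matches $(m,i_0)$ with $(m,j_0)$, while $c(m,i_0)=\phi(i_0)=\phi(j_0)=c(m,j_0)=l_0$; thus the entourage $E:=\Delta_\w\cup\{(x,g^{i_0j_0}_{\gamma_0}x):x\in\w\}\in\E_G$ satisfies $|E(a)\cap c^{-1}(l_0)|\ge 2$ for the infinitely many $a=(m,i_0)\in c^{-1}(l_0)$, so $c^{-1}(l_0)$ is not thin --- a contradiction. I expect the only genuine obstacle to be the simultaneous construction of a size-$\mathfrak u$ ultrafilter base together with the refining almost disjoint family, especially in the case $\mathfrak u=\mathfrak c$; once that bookkeeping is done, the remainder is a routine elaboration of the pigeonhole $n\to n-1$ transported across $\U$, running parallel to Lemmas~\ref{l:thin=c} and~\ref{t:thinr}.
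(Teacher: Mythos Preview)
Your proposal is correct and follows essentially the same approach as the paper's own proof: partition $\w$ into $n$ layers, fix a free ultrafilter with a base of size $\mathfrak u$, refine the base by an almost disjoint family (via Lemma~\ref{l:a'} when $\mathfrak u<\mathfrak c$ and via the recursion of Lemma~\ref{l:thin=c} when $\mathfrak u=\mathfrak c$), generate $G$ by layer-swapping involutions supported on these sets, and run the pigeonhole argument through the ultrafilter for the lower bound. The only cosmetic differences are that the paper uses residue classes $\Omega_i=\{nk+i:k\in\w\}$ instead of your product $\w\times\{1,\dots,n\}$, packs all $\binom n2$ layer-swaps for a given $\alpha$ into a single involution $f_\alpha$ rather than using separate generators $g^{ij}_\gamma$, and phrases the pigeonhole step via shifted ultrafilters $\U_i=\{V:\exists U\in\U\;U+i\subseteq V\}$ rather than your ``column type'' map; the arguments are otherwise identical.
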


\begin{proof}  Write the set $\w$ as the union $\w=\bigcup_{i\in n}\Omega_i$ where $\Omega_0=\{nk:k\in\w\}$ and $\Omega_i=\Omega_0+i$ for $i\in n$. 

By Proposition 8.1 in \cite{Blass}, there exists an almost disjoint family $\A\subseteq[\Omega_0]^\w$ of cardinality $|\A|=\mathfrak c$. Consider the   filter $$\mathcal F=\{X\in[\Omega_0]^\w:|\{A\in\A:A\not\subseteq^* X\}|<\mathfrak c\}.$$

Fix any free ultrafilter $\U$ on $\Omega_0$ possessing a base $\{U_\alpha\}_{\alpha\in\mathfrak u}$ of cardinality $\mathfrak u$. If $\mathfrak u=\mathfrak c$, then we can additionally assume that $\F\subseteq\U$.

\begin{lemma} There exists an almost disjoint family $\{A_\alpha\}_{\alpha\in\mathfrak u}\subseteq[\Omega_0]^\w$ such that $A_\alpha\subseteq U_\alpha$ for all $\alpha\in\mathfrak u$.
\end{lemma}

\begin{proof} If $\mathfrak u<\mathfrak c$, then the existence of the family $(A_\alpha)_{\alpha\in\mathfrak u}$ follows from Lemma~\ref{l:a'}. So, we assume that $\mathfrak u=\mathfrak c$. In this case the ultrafilter $\U$ contains the filter $\F$.

By transfinite induction, for every $\alpha\in\mathfrak c$ choose a set $A'_\alpha\in\A\setminus\{A'_\beta\}_{\beta<\alpha}$ such that the set $A_\alpha=A_\alpha'\cap U_\alpha$ is infinite. The choice of the set $A'_\alpha$ is always possible since the family $\{A\in\A:|A\cap U_\alpha|=\w\}$ has cardinality of continuum (in the opposite case, the set $\w\setminus U_\alpha$  belongs to the filter $\F\subseteq\U$). 
\end{proof}

Consider the finite family $$[n]^2=\{(i,j)\in n\times n:i<j\},$$ which can be identified with the family of 2-element subsets of the ordinal $n=\{0,\dots,n-1\}$.
 
For every $\alpha\in\mathfrak u$ and $p\in[n]^2$ choose an infinite set $A_{\alpha,p}\subseteq A_\alpha\subseteq\Omega_0$ such that the family $(A_{\alpha,p})_{p\in[n]^2}$ is disjoint. Consider the involution $f_\alpha:\w\to\w$ defined by the formula
$$f_\alpha(x)=\begin{cases} x+(j-i)&\mbox{if $x-i\in A_{\alpha,p}$ for some $p=(i,j)\in[n]^2$};\\
x-(j-i)&\mbox{if $x-j\in A_{\alpha,p}$ for some $p=(i,j)\in[n]^2$};\\
x&\mbox{otherwise}.
\end{cases}
$$

Let $G\subseteq S_\w$ be the subgroup generated by the set of involutions $\{f_\alpha\}_{\alpha\in\mathfrak u}$. 

Taking into account that the family $(\supp(f_\alpha))_{\alpha\in\mathfrak u}$ is almost disjoint, one can show that the group $G$ is almost Boolean and the coarse space $(\w,\E_G)$ is cellular and $2$-thin. The choice of the involutions $f_\alpha$ guarantees that for every $i\in n$ the set $\Omega_i$ is thin. Therefore, $\w=\bigcup_{i\in n}\Omega_i$ is the union of $n$ thin sets $\Omega_0,\dots,\Omega_{n-1}$.

Now take any partition $\{P_1,\dots,P_{n-1}\}$ of $\w$ into $n-1$ pieces. For every $i\in n$ consider the ultrafilter $\U_i=\{V\subseteq \w:\exists U\in\U\;\;U+i\subseteq V\}$ on $\w$, and observe that for some number $k_i\in\{1,\dots,n\}$ the set $P_{k_i}$ belongs to $\U_i$. By the Pigeonhole principle, there exists an index $k\in\{1,\dots,n-1\}$ such that the set $\{i\in n:k_i=k\}$ contains two numbers $i<j$. Hence $P_k\in \U_i\cap \U_j$ and there exists $\alpha\in \mathfrak u$ such that $U_\alpha+\{i,j\}\subseteq P_k$. Consider the pair $p=(i,j)\in[n]^2$ and observe that for the entourage $E_\alpha=\Delta_\w\cup\{(x,f_\alpha(x)):x\in \w\}$ the set $\{x\in P_k:P_k\cap E_\alpha(x)\ne\{x\}\}$ contains the set $A_{\alpha,p}+\{i,j\}$ and hence is infinite, witnessing that $P_k$ is not thin.
\end{proof}

\begin{problem} Let $G$ be a group of cardinality $\aleph_1$ endowed with the finitary coarse group structure $\E_G$. Can every $n$-thin subset of G be partitioned into $n$ thin subsets? {\rm This is so if $G$ is Abelian, see \cite{PS15}.}
\end{problem}



\section{ Coarse groups }\label{s:groups}

A topological space $X$ with no isolated points is called {\it maximal} if $X$ has an isolated point in every stronger topology. 
Under CH, there exists a  maximal topological group \cite{b8}.  
On the other hand, if there exists a maximal  topological group, then there is a $P$-point in $\beta\w\setminus\w$  \cite{b9}.

If  a topological group $G$ contains an infinite totally bounded subset, then there exists a non-closed discrete subset of $G$ \cite{b10}.  
Every discrete subset of a maximal group is closed. 
Answering a question from \cite{b10},  Reznichenko and Sipacheva \cite{b11} proved that if there exists a countable non-discrete topological group in which every discrete subset is closed, then there is a rapid ultrafilter  on $\omega$.

For a group $G$, a family $\mathcal{I}$ of subset of $G$ is called \cite{b12} a {\em group ideal} if  $\bigcup\I=G$,  $ \mathcal{I}$   is closed under taking subsets and finite unions, and $A, B\in \mathcal{I}$  implies $AB^{-1}\in  \mathcal{I}$. 
Every group ideal $ \mathcal{I}$ defines a coarse structure  on $G$ with the base  $\big\{\{(x, Fx): x\in G\} :1_G\in F\in \mathcal{I} \big\}$  and $G$  endowed with this coarse structure is called a {\it  coarse group}. 

   A coarse group $(G, \mathcal{E})$ is called {\it maximal }   $(G, \mathcal{E})$ is unbounded but $G$  is bounded in every (not necessary group)  coarse structure on $G$, which is strictly larger that $\E$. 

Under CH, there exists a maximal coarse  Boolean group \cite{b13}.

\begin{problem}[\cite{b13}] Does there exist a maximal coarse group in ZFC?
\end{problem}

Every maximal coarse group $(G, \mathcal{E})$ is large \cite{b13} and hence indiscrete. 

\begin{problem} In ZFC, does there exist an indiscrete coarse group $(G,\E)$?
\end{problem}

Let $G$ be a totally bounded topological group endowed with the finitary coarse group structure $\E_G$. Then there exist $\E_G$-discrete subsets $A, B\subseteq G$ such that $A$ is dense and $B$ has the unique    limit point \cite{b14}.
For open problems concerning $\E_G$-discrete subsets of topological groups, see \cite{b14}, where the following problem is posed.

\begin{problem}  In ZFC, does there exist a countable non-discrete topological group $G$ in which every $\E_G$-discrete subset is closed?
\end{problem}

\section{Acknowledgement}

The authors would like to express their thanks to Santi Spadaro for the information on the result \cite{BHM} of Baumgartner, Hajnal and M\'at\'e, which was essentially used in the proofs of Theorems~\ref{t:thin} and \ref{t:thin-n}. 

\end{document}